\numberwithin{equation}{section}
\theoremstyle{plain}
\newtheorem{theorem}{Theorem}
\newtheorem{corollary}{Corollary}
\newtheorem{lemma}{Lemma}
\newtheorem{proposition}{Proposition}
\theoremstyle{definition}
\newtheorem{definition}{Definition}
\theoremstyle{remark}
\newtheorem{example}{Example}
\newtheorem{remark}{Remark}
\renewcommand{\phi}{\varphi}
\newcommand{\Hc}{\mathcal{H}}
\newcommand{\Oc}{\mathcal{O}}
\newcommand{\Bc}{\mathcal{B}}
\newcommand{\Mc}{\mathcal{M}}
\newcommand{\F}{\mathbb{F}}
\newcommand{\Ac}{\mathcal{A}}
\newcommand{\A}{\mathbb{A}}
\newcommand{\C}{\mathbb{C}}
\newcommand{\R}{\mathbb{R}}
\newcommand{\Q}{{\mathbb{Q}}}
\newcommand{\Z}{\mathbb{Z}}
\newcommand{\N}{\mathbb{N}}
\newcommand{\pfk}{\mathfrak{p}}
\newcommand{\Spec}{\mathrm{Spec}}
\newcommand{\Spev}{\mathrm{Spev}}
\newcommand{\Speh}{\mathrm{Speh}}
\newcommand{\Spa}{\mathrm{Spa}}
\newcommand{\Halos}{\mathfrak{Halos}}
\newcommand{\Ker}{\mathrm{Ker}}
\newcommand{\Hol}{\mathcal{H}ol}
\newcommand{\Frac}{\mathrm{Frac}}
\newcommand{\Top}{\textsc{(Top)}}
\newcommand{\SNRings}{\textsc{(SNRings)}}
\newcommand{\Rings}{\textsc{(Rings)}}
\newcommand{\limind}{{\underset{\longrightarrow}{\mathrm{lim}}\,}}
\newcommand{\lextimes}{{\scriptstyle{[\times]}}}
\title{Global analytic geometry}
\author{Fr\'ed\'eric Paugam
\footnote{Universit\'e Paris 6,
Institut de Math\'ematiques de Jussieu.}}
\begin{document}
\maketitle



\begin{abstract}
We define, formalizing Ostrowski's classification of seminorms on $\Z$,
a new type of valuation of a ring that combines the notion
of Krull valuation with that of a multiplicative seminorm.
This definition partially restores the broken symmetry
between archimedean and non-archimedean valuations.
This also allows us to define a notion of global analytic
space that reconciles Berkovich's notion of analytic space
of a (Banach) ring with Huber's notion of non-archimedean
analytic spaces. After defining natural generalized valuation spectra and
computing the spectrum of $\Z$ and $\Z[X]$,
we define analytic spectra and sheaves of analytic functions
on them.
\end{abstract}

\newpage
\tableofcontents
\newpage


\section*{Introduction}
Many interesting results on polynomial equations can be proved
using the mysterious interactions
between algebraic, complex analytic and p-adic analytic geometry.
The aim of global analytic geometry is to construct a category of spaces which contains
these three geometries.

Remark that the study of a given polynomial equation $P(X,Y)=0$ is completely
equivalent to the study of the corresponding commutative ring
$A=\Z[X,Y]/(P(X,Y))$. To associate a geometry to a given ring $A$, one
first needs to define what the points, usually
called \emph{places} of this geometry are.
There are many different definitions of what a place of a ring is.
K\"urch\'ak (1912) and Ostrowski (1917) use real valued multiplicative (semi)norms,
Krull (1932) uses valuations with values in abstract totally ordered
groups and Grothendieck (1958) uses morphisms to fields.
There is a natural geometry associated to each type of places:
\begin{enumerate}
\item the theory of schemes (see \cite{Gro1}) ensues from Grothendieck's viewpoint,
\item Berkovich's geometry (see \cite{Berkovich1}) ensues from Ostrowski's viewpoint,
\item Zariski/Huber's geometry (see \cite{E-Artin1} and \cite{Huber1})
ensues from Krull's viewpoint.
\end{enumerate}
For some number theoretical purposes like the study of functional
equations of L-functions, a dense part of the mathematical community
tend to say that one should try to
\begin{center}
``have a perfect analogy between archimedean and non-archimedean valuations''.
\end{center}

As an illustration of this problematic, one can recall that the functional
equation
$$\hat{\zeta}(s)=\hat{\zeta}(1-s)$$
of Riemann's completed zeta function
$$\hat{\zeta}(s)=\pi^{-s/2}\Gamma(s/2)\prod_{p}\frac{1}{1-p^{-s}}$$
cannot be studied geometrically without handling the archimedean factor
$\zeta_{\infty}(s)=\pi^{-s/2}\Gamma(s/2)$
(that corresponds to the archimedean absolute value on $\Q$)
in the given geometrical setting. The question is even more interesting
for higher dimensional varieties over $\Z$ because the proof of the
functional equation of their zeta function is a widely open question.
Arakelov geometry (see \cite{Soule4} and \cite{Durov-2007}) studies archimedean properties
of algebraic varieties, and this results in
a deep improvement of our understanding of the geometry of numbers,
but in no proof of the functional equation. A good reason to think that
global analytic spaces are useful for this question is the following
definition due to Berkovich (private email) which is easily seen
to be equivalent to the definition of Tate's thesis \cite{Tate-These},
which is the corner stone of modern analytic number theory.

\begin{definition}
Let $|.|_{0}$ be the trivial seminorm on $\Z$ and
$U\subset \Mc(\Z)$ be the complement of it in the
analytic space of $\Z$. Let $\Oc$ be the sheaf of
analytic functions on $\Mc(\Z)$. The ad\`eles of
$\Z$ are the topological ring
$$\A:=(j_{*}\Oc_{U})_{(|.|_{0})}$$
of germs of analytic functions at the trivial seminorm.
\end{definition}

This geometric definition of ad\`eles opens the road to various higher
dimensional generalizations and shows that the topological sheaf of functions on an analytic space is a good replacement of ad\`eles in higher dimensions. It also
shows, once combined with the ideas already present in Emil Artin's book
\cite{E-Artin1}, that it is worth continuing to think about the following na\"ive but
fundamental question\footnote{Question which will not get a satisfying answer
in this paper.}: what is a number?

Another motivation for defining a natural setting for global analytic
geometry is that, in the conjectural correspondence between motives
and automorphic representations due to Langlands, a lot of
(non-algebraic) automorphic representations are left aside.
If one enlarges the category of motives by adding the cohomology
of natural coefficient systems on analytic varieties, one can
hope to obtain a full Langlands correspondence between
certain ``analytic motivic coefficients'' and all automorphic representations.
The definition of these analytic motivic coefficients is at this time not
at all clear and far beyond the scope of the present paper.

As a first step in the direction of this long term allusive objective, we define in
this text a simple notion of generalized valuation (with tempered growth)
that allows one to mix the main viewpoints
of places in a definition that contains but does not distinguish
archimedean and non-archimedean valuations.
This definition ensues a new setting of global analytic geometry,
that is probably not definitive, but has the merit to give one positive
and computable answer to the question:
``is it possible to treat all places on equality footing''.

The first construction in the direction of a global analytic geometry
is due to Berkovich \cite{Berkovich1}, chapter 1 (see also
Poineau's thesis \cite{Poineau}): he considers
spaces of multiplicative seminorms on commutative Banach rings, giving
the example of the Banach ring $(\Z,|.|_\infty)$ of integers with their
archimedean norm. He defines a category of global analytic spaces that
contains complex analytic and his non-archimedean analytic spaces. One of
the limitations of his construction is that a good theory of non-archimedean
coherent analytic sheaves sometimes imposes the introduction of a
Grothendieck topology (the rigid analytic topology defined by Tate \cite{Tate2})
on his analytic spaces, which is
essentially generated by affinoid domains $\{x|\,|a(x)|\leq|b(x)|\neq 0\}$.
It was proved by Huber \cite{Huber1} that in the non-archimedean case,
the topos of sheaves for this Grothendieck topology has enough points,
so that it corresponds to a usual topological space.
This space is the valuation spectrum of the corresponding adic ring, whose
points are bounded continuous Krull valuations.
The non-archimedean components of Berkovich's analytic spaces give
subspaces (or more precisely retractions) of Huber's valuation spectra
corresponding to rank one valuations.
However, there is no construction in the literature that combines
Huber's viewpoint (which is nicer from an abstract sheaf theoretic point of view)
with Berkovich's viewpoint (which has the advantage of giving separated spaces and allowing to naturally incorporate archimedean components).

We propose in this text a new kind of analytic spaces that gives a natural
answer to this simple open problem.
The construction is made in several steps. We start in Section 1 by studying
the category of halos, which is the simplest category that contains the category of rings,
and such that Krull valuations and seminorms are morphisms in it.
In Section 2, we define a new notion of tempered generalized valuation which entails
a new notion of place of a ring. In Section 3, we use this new notion of place to
define a topological space called the harmonious spectrum of a ring. In Section 4, we give a definition of the analytic spectrum and define analytic spaces using
local model similar to Berkovich's \cite{Berkovich1}, 1.5.
We finish by computing in detail the points of the global analytic affine line over $\Z$
and proposing another approach to the definition of analytic functions.


All rings and semirings of this paper will be unitary, commutative
and associative.

\section{Halos}
We want to define a category that contains rings fully faithfully
and such that valuations and (multiplicative) seminorms both
are morphisms in this category. The most simple way
to do this is to use the category whose objects are semirings
equipped with a partial order compatible to their operations
and whose morphisms are maps $f:A\to B$ such that
$f(0)=0$, $f(1)=1$, and that fulfill the subadditivity and submultiplicativity
conditions
$$
\begin{array}{c}
f(a+b)\leq f(a)+f(b),\\
f(ab)\leq f(a)f(b).
\end{array}
$$
An object of this category will be called a halo.
It is often supposed, for localization purposes, that $f$ is strictly multiplicative,
i.e., $f(ab)=f(a)f(b)$. We will see that this hypothesis is sometimes
too restrictive for our purposes.

\subsection{Definition and examples}
\begin{definition}
A halo is a semiring $A$ whose underlying set is equipped with
a partial order $\leq$ which is compatible to its operations:
$x\leq z$ and $y\leq t$ implies $xy\leq zt$ and $x+y\leq z+t$.
A morphism between two halos is an increasing map
$f:A\to B$ which is submultiplicative, i.e.,
\begin{itemize}
\item $f(1)=1$,
\item $f(ab)\leq f(a)f(b)$ for all $a,b\in A$,
\end{itemize}
and subadditive, i.e.,
\begin{itemize}
\item $f(0)=0$,
\item $f(a+b)\leq f(a)+f(b)$ for all $a,b\in A$.
\end{itemize}
The category of halos is denoted $\Halos$.
A halo morphism is called square-multiplicative (resp. power-multiplicative,
resp. multiplicative)
if $f(a^2)=f(a)^2$ (resp. $f(a^n)=f(a)^n$, resp. $f(ab)=f(a).f(b)$)
for all $a,b\in A$ and $n\in \N$.
The categories of halos with square-multiplicative (resp. power-multiplicative, resp.
multiplicative) morphisms between them is denoted
$\Halos^{sm}$ (resp. $\Halos^{pm}$, resp. $\Halos^{m}$). 
\end{definition}

Let $B$ be a semiring. The trivial order on $B$ gives
it a halo structure that we will denote $B_{triv}$.
If $A$ is a halo and $f:A\to B_{triv}$ is a halo morphism,
then $f$ is automatically a semiring morphism.
The functor $B\mapsto B_{triv}$ gives a fully faithful
embedding of the category of semirings into the
categories $\Halos$, $\Halos^{sm}$, $\Halos^{pm}$ and $\Halos^m$.

\begin{remark}
The field $\R$ equipped with its usual order is not a halo
because this order is not compatible with the multiplication
of negative elements. This shows that a halo is something
different of the usual notion of an ordered ring used
in the literature.
\end{remark}

We will now prove that rings have only one halo structure:
the trivial one.

\begin{lemma}
A halo which is a ring has necessarily a trivial order.
\end{lemma}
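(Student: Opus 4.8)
The plan is to show that if $A$ is a halo whose underlying semiring is a ring, then $x \leq y$ in $A$ forces $x = y$. The key observation is that in a ring we may add additive inverses freely, and the order is compatible with addition, so any inequality can be translated to an inequality involving $0$. Concretely, suppose $x \leq y$. Adding $-x$ on both sides (using compatibility of $\leq$ with $+$, applied with the relation $-x \leq -x$) gives $0 \leq y - x$. So it suffices to prove that $0 \leq a$ implies $a = 0$ for every $a \in A$; equivalently, that the ``positive cone'' $\{a : 0 \leq a\}$ is trivial.

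The main step is then to exploit compatibility of $\leq$ with multiplication on negative elements, which is exactly the phenomenon flagged in the Remark preceding the lemma. Assume $0 \leq a$. First I would like to also get $0 \leq -a$, which together with compatibility with addition would give $0 \leq a + (-a) = 0$, i.e. nothing — so that naive route is not enough, and one must instead square. Here is the idea: from $0 \leq a$ and $0 \leq a$, compatibility with multiplication gives $0 \cdot 0 \leq a \cdot a$, i.e. $0 \leq a^2$. Separately, multiply the relation $0 \leq a$ by the relation $-1 \leq -1$ (valid since $\leq$ is reflexive): compatibility of $\leq$ with multiplication yields $0 \cdot (-1) \leq a \cdot (-1)$, that is $0 \leq -a$. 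Now from $0 \leq a$ and $0 \leq -a$ add them to get $0 \leq 0$ — again vacuous — so instead multiply $0 \leq a$ by $0 \leq -a$: this gives $0 \leq a \cdot (-a) = -a^2$. Combining $0 \leq a^2$ and $0 \leq -a^2$ and adding yields $0 \leq 0$, still vacuous; the real conclusion comes from adding $0 \leq a^2$ to $0 \leq -a^2$ in the form: from $0 \le a^2$ we get (adding $-a^2$ to both sides) $-a^2 \le 0$, and combined with $0 \le -a^2$ and antisymmetry of the partial order we conclude $-a^2 = 0$, hence $a^2 = 0$. Finally, apply the same argument to $a^2$ in place of... no — better: from $0 \le a$ and $0 \le -a$, adding gives $0 \le 0$, but adding $0 \le a$ to $-a \le -a$ gives $-a \le 0$, and together with $0 \le a$... that only recovers $a \ge 0$. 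The decisive combination is: $0 \le a$ and $0 \le -a$ together with antisymmetry of $\le$ applied after noting $0 \le a$ implies $-a \le 0$; hence $-a \le 0 \le -a$, so $-a = 0$, i.e. $a = 0$.

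So the clean chain is: $x \le y \Rightarrow 0 \le y-x$; then $0 \le y-x$ implies (multiplying by $-1 \le -1$) $0 \le -(y-x) = x - y$; meanwhile $0 \le y-x$ implies (adding $-(y-x)$) $-(y-x) \le 0$, i.e. $x-y \le 0$; combining $x - y \le 0$ and $0 \le x-y$ with antisymmetry gives $x - y = 0$, i.e. $x = y$. Thus $\le$ is the trivial order. The only subtlety — and the step I expect to need the most care in writing cleanly — is the legitimacy of multiplying the inequality $0 \le a$ by the reflexive relation $-1 \le -1$ to produce $0 \le -a$; this uses compatibility of $\le$ with multiplication for a \emph{negative} multiplier, which is allowed here precisely because a halo imposes no sign restriction on that axiom (unlike an ordered ring), and it is this freedom that collapses the order. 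I would state that one line explicitly and let the rest follow by the symmetric manipulation above.
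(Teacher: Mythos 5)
Your final ``clean chain'' is correct and is essentially the same argument as the paper's: both use compatibility of $\leq$ with addition to shift the inequality and compatibility with multiplication by the reflexive relation $-1 \leq -1$ to reverse it, then conclude by antisymmetry (the paper works directly with $-a \leq -b$, you pass through the positive cone $0 \leq y-x$, a cosmetic difference). The exploratory middle paragraph with squaring and dead ends is unnecessary; only the last chain is the proof.
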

\begin{proof}
It is mainly the existence of an inverse for addition which implies that
the order is trivial. Suppose that $a\leq b\in A$. Since $-b-a=-b-a$ and
the sum respects the order,
we have $-b-a+a\leq -b-a+b$, i.e. $-b\leq -a$. We know that $-1=-1$ so
$(-1).(-b)\leq (-1).(-a)$. Now adding $b$ to
$0=(-1+1).(-b)=(-1).(-b)+(-b)$ implies $(-1).(-b)=b$.
So we have $b\leq a$, which implies that $b=a$.
\end{proof}

\begin{remark}
From now on, we will often identify a ring with its unique (trivial) halo structure.
\end{remark}

\begin{definition}
A halo whose underlying semiring is a semifield is called
an aura.
\end{definition}

\begin{definition}
A halo $A$ is called positive if $0<1$ in $A$.
\end{definition}

\begin{remark}
If a halo $A$ is positive, then
$0=0.a\leq 1.a=a$ for all $a\in A$.
\end{remark}

\begin{remark}
\label{smaller-than}
If a totally ordered aura $R$ is positive and $0<r\neq 1$ in $R$,
then there exists $0<r'<r$ in $R$. Indeed, if $r>1$, then
$r'=1/r<1<r$ and if $r<1$, then $r'=r^2<r$.
\end{remark}

We now give three interesting examples of halo morphisms.

\begin{example}
The semifield $\R_{+}$ equipped with its usual laws
and ordering is a totally ordered positive aura.
If $A$ is a ring, then a classical seminorm on $A$ is
exactly a halo morphism
$$|.|:A\to \R_{+}.$$
\end{example}

\begin{example}
More generally, if $R$ is a real closed field, the semifield $R_{\geq 0}=\{x^2|x\in R\}$
of its positive elements (i.e. its positive cone, that is also its squares since $R$
is real closed) equipped with its usual laws and ordering is a totally ordered positive aura.
If $A$ is a ring, we can thus generalize seminorms by using halo morphisms
$$|.|:A\to R_{\geq 0}.$$
These have the advantage to be tractable with model theoretic
methods because the theory of real closed fields admits elimination of
quantifiers (see \cite{Schwartz-valuation}).
\end{example}

\begin{example}
If $\Gamma$ is a totally ordered group (multiplicative notation), the semigroup
$R_{\Gamma}:=\{0\}\cup \Gamma$ equipped with the multiplication and order
such that $0.\gamma=0$ and $0\leq \gamma$ for all $\gamma\in \Gamma$
and with addition $a+b=\max(a,b)$ is also a totally ordered positive aura.
Its main difference with the positive cone of a real closed field is
that it does not have the simplification property
$$x+y\leq x+z\Rightarrow y\leq z.$$
If $A$ is a ring, a halo morphism
$$|.|:A\to R_{\Gamma}$$
is exactly a valuation in Krull's sense. If $\Gamma^{div}$ is the divisible
closure of $\Gamma$ and $R=\R((\Gamma^{div}))$ is the corresponding
real closed field, we can associate to $|.|$ a halo morphism
$$|.|:A\to R_{\geq 0}$$
given by composition with the natural multiplicative
halo morphism \mbox{$R_{\Gamma}\to R_{\geq 0}$}.
\end{example}

\subsection{Multiplicative elements and localization}
\begin{definition}
\label{definition-multiplicative}
Let $f:A\to B$ be a halo morphism. The set of multiplicative elements
for $f$ in $A$ is the set $M_{f}$ of $a\in A$ such that
\begin{enumerate}
\item $f(a)\in B^\times$,
\item for all $b\in A$, $f(ab)=f(a)f(b)$.
\end{enumerate}
\end{definition}

\begin{proposition}
\label{prop-localization-multiplicative}
Let $A$ be a halo and $S\subset A$ be a multiplicative
subset (i.e. a subset that contains $1$ and is stable by multiplication).
Then the localized semiring $A_{S}$ is equipped with a natural
halo structure such that if $f:A\to B$ is a halo morphism
with $S\subset M_{f}$ then $f$ factorizes uniquely through
the morphism $A\to A_{S}$.
\end{proposition}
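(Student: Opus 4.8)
The plan is to carry the order of $A$ over to $A_S$ in the obvious fashion, verify the halo axioms, and then build the universal factorization by hand.

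Recall that $A_S$ is the semiring of fractions $a/s$ ($a\in A$, $s\in S$), with $a/s=b/t$ iff $uta=usb$ for some $u\in S$, sum $a/s+b/t=(at+bs)/st$ and product $(a/s)(b/t)=ab/st$; write $\iota\colon A\to A_S$, $a\mapsto a/1$. I would define the order on $A_S$ by: $a/s\leq b/t$ iff there is $u\in S$ with $uta\leq usb$ in $A$. The first task is to check that this is well defined on classes and is a partial order compatible with $+$ and $\cdot$. Well-definedness, reflexivity and transitivity are routine and use only that $+$ and $\cdot$ are monotone in $A$ --- which is built into the halo axioms, taking $y\leq y$ in ``$x\leq z,\ y\leq t\Rightarrow xy\leq zt$''. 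Antisymmetry is the one point worth spelling out: from $uta\leq usb$ and $vsb\leq vta$, multiplying the first inequality by $v$ and the second by $u$ gives $uvta\leq uvsb$ and $uvsb\leq uvta$, so antisymmetry of $\leq$ in $A$ yields $(uv)ta=(uv)sb$ with $uv\in S$, i.e. $a/s=b/t$. Compatibility with $+$ and $\cdot$ on $A_S$ is obtained by multiplying two witnessing inequalities in $A$ by suitable monomials and then adding, resp. multiplying, them; notably no positivity hypothesis on $A$ is needed, only monotonicity. This makes $A_S$ a halo, and $\iota$ a halo morphism (it is a semiring map and is order-preserving by taking $u=1$).

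Next, given a halo morphism $f\colon A\to B$ with $S\subseteq M_f$, I would set $\bar{f}(a/s):=f(a)f(s)^{-1}$, which makes sense since $f(s)\in B^\times$ whenever $s\in S\subseteq M_f$. Here I would first note the elementary fact that multiplication by any $c\in B$ is monotone, so multiplication by a unit of $B$ is an order-automorphism. Then $\bar{f}$ is well defined, because applying $f$ to $uta=usb$, splitting $f(uta)=f(u)f(t)f(a)$ and $f(usb)=f(u)f(s)f(b)$ (legal as $u,s,t\in M_f$) and cancelling units gives $f(a)f(s)^{-1}=f(b)f(t)^{-1}$. It is a halo morphism: $\bar{f}(1/1)=1$ trivially; it is increasing by applying $f$ to a witness $uta\leq usb$, splitting via $M_f$ and dividing by the unit $f(u)f(s)f(t)$; it is submultiplicative because $\bar{f}(ab/st)=f(ab)f(st)^{-1}\leq f(a)f(b)f(s)^{-1}f(t)^{-1}=\bar{f}(a/s)\bar{f}(b/t)$ using $f(st)=f(s)f(t)$ and $f(ab)\leq f(a)f(b)$ and then multiplying by the unit $f(st)^{-1}$; subadditivity is the analogous computation on $(at+bs)/st$, using $s,t\in M_f$ to write $f(at)=f(a)f(t)$ and $f(bs)=f(b)f(s)$. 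Hence $\bar{f}\circ\iota=f$.

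The step I expect to be the genuine obstacle is uniqueness. For any halo morphism $g\colon A_S\to B$ with $g\circ\iota=f$, submultiplicativity applied to $(a/s)(s/1)=a/1$ only gives $f(a)\leq g(a/s)f(s)$, i.e. the one inequality $g(a/s)\geq\bar{f}(a/s)$; it does not by itself force the reverse. What makes the factorization unique is that it must be compatible with the invertibility of $S$: one checks that $\bar{f}$ is multiplicative at each $\iota(s)$ (from $s\in M_f$ one has $\bar{f}(\iota(s))=f(s)\in B^\times$ and $\bar{f}(\iota(s)\cdot b/t)=f(sb)f(t)^{-1}=\bar{f}(\iota(s))\bar{f}(b/t)$), so $\iota(S)\subseteq M_{\bar{f}}$; and any halo morphism $g$ over $f$ with this same property satisfies $g(\iota(s))g(1/s)=g(1/1)=1$, hence $g(1/s)=f(s)^{-1}$, hence $1/s\in M_g$ as well, hence $g(a/s)=g(1/s)g(a/1)=f(s)^{-1}f(a)=\bar{f}(a/s)$. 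Thus $\bar{f}$ is the unique factorization of $f$ through $\iota$ respecting the multiplicativity of $S$, and the hypothesis $S\subseteq M_f$ is used exactly at the two places above: to invert $f(s)$, and to split $f(sb)=f(s)f(b)$.
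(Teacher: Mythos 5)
Your existence argument is essentially the paper's: the paper works with the same pre-order $\exists u\in S,\ u a_1 s_2 \leq u s_1 a_2$ on $A\times S$, checks compatibility with the operations, and defines $\tilde f(a,s)=f(a)/f(s)$, checking it descends. The only cosmetic difference is that the paper defines a pre-order on $A\times S$ and observes that the induced equivalence relation is precisely the localization relation (so antisymmetry is built in by passing to the quotient order), whereas you put the order directly on $A_S$ and verify antisymmetry by the two-sided multiplication trick; both are correct.

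The genuine content of your proposal is the uniqueness discussion, and there you have put your finger on a real gap: the paper asserts that $f$ ``factorizes uniquely'' but its proof only constructs the factorization and never addresses uniqueness. As you observe, submultiplicativity of an arbitrary halo morphism $g$ with $g\circ\iota=f$ only yields the one-sided bound $g(a/s)\geq f(a)f(s)^{-1}$, and this is not an accident of the proof: uniqueness in the naive sense is actually false. For instance, take $A=\N$ with the trivial order, $S=\{2^n\}$, $B=\R_+$ and $f$ the inclusion; then $A_S\subset\Q_{\geq 0}$ still carries the trivial order, and alongside $\bar f(x)=x$ the map $g(0)=0$, $g(x)=\max(x,1)$ for $x>0$ is also a halo morphism with $g\circ\iota=f$. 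Your repair --- restricting to factorizations $g$ for which $\iota(S)\subseteq M_g$, and showing any such $g$ equals $\bar f$ --- is exactly the right qualification, and it is also the one that makes $\Halos^m$ behave as expected (in that sub-category, where all morphisms are multiplicative, uniqueness is automatic, which is the context of the Corollary the paper draws). One small streamlining: you do not actually need the intermediate step $1/s\in M_g$; applying $\iota(s)\in M_g$ directly to the element $a/s$ gives $g(a/1)=f(s)g(a/s)$ and hence $g(a/s)=f(s)^{-1}f(a)$ immediately.
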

\begin{proof}
The localized semiring $A_{S}$ is defined
as the quotient of the product $A\times S$ by the relation
$$(a,s)R(b,t)\Leftrightarrow \exists\,u\in S,\, uta=usb.$$
Recall that the sum and product on $A\times S$ are defined by
$(a,s)+(b,t)=(at+bs,st)$ and $(a,s).(b,t)=(ab,st)$.
We put on $A\times S$ the pre-order given by
$$(a_{1},s_{1})\leq (a_{2},s_{2})\Leftrightarrow \exists\, u\in S,\,ua_{1}s_{2}\leq us_{1}a_{2}.$$
Remark that the equivalence relation associated to this pre-order is exactly
the equivalence relation we want to quotient by.
This order is compatible with the two operations given
above. Indeed, if $(a_{1},s_{1})\leq (a_{2},s_{2})$ and
$(b_{1},t_{1})\leq (b_{2},t_{2})$, then by definition there exist $u,v\in S$
with
$ua_{1}s_{2}\leq us_{1}a_{2}$ and
$vb_{1}t_{2}\leq vt_{1}b_{2}$, so that
$uva_{1}b_{1}s_{2}t_{2}\leq uva_{2}b_{2}s_{1}t_{1}$
by compatibility of the order with multiplication in $A$.
This shows that 
$(a_{1},s_{1}).(b_{1},t_{1})\leq (a_{2},s_{2}).(b_{2},t_{2})$.
Now $(a_{1},s_{1})+(b_{1},t_{1})=(a_{1}t_{1}+b_{1}s_{1},s_{1}t_{1})$
and
$(a_{2},s_{2})+(b_{2},t_{2})=(a_{2}t_{2}+b_{2}s_{2},s_{2}t_{2})$.
Remark that
$(a_{1}t_{1}+b_{1}s_{1}).s_{2}t_{2}=a_{1}t_{1}s_{2}t_{2}+b_{1}s_{1}s_{2}t_{2}$
and
$(a_{2}t_{2}+b_{2}s_{2}).s_{1}t_{1}=a_{2}t_{2}s_{1}t_{1}+b_{2}s_{2}s_{1}t_{1}$.
The inequalities
$ua_{1}s_{2}\leq us_{1}a_{2}$ and
$vb_{1}t_{2}\leq vt_{1}b_{2}$
imply
$ua_{1}s_{2}.(t_{1}t_{2}v)\leq us_{1}a_{2}.(t_{1}t_{2}v)$
and
$vb_{1}t_{2}.(s_{1}s_{2}u)\leq vt_{1}b_{2}.(s_{1}s_{2}u)$.
Adding these inequalities and changing parenthesis, we get
$(uv).(a_{1}t_{1}+b_{1}s_{1}).(s_{2}t_{2})\leq 
(uv).(a_{2}t_{2}+b_{2}s_{2}).(s_{1}t_{1})$
which shows that
$(a_{1},s_{1})+(b_{1},t_{1})\leq (a_{2},s_{2})+(b_{2},t_{2})$,
as claimed. So the operations on $A\times S$ are compatible
with the defined pre-order. The quotient order of this pre-order
is exactly the underlying set of the localized semiring,
and is equipped with a canonical order compatible to its operations,
i.e. a canonical halo structure.
Let $f:A\to B$ be a halo morphism such that $S\subset M_{f}$.
Then we can define $\tilde{f}:A\times S\to B$ by
$\tilde{f}(a,s)=f(a)/f(s)$. This is well defined since $f(s)$ is invertible in $B$.
Suppose now that $(a_{1},s_{1})\leq (a_{2},s_{2})$ in $A\times S$, which means
that there exists $u\in S$ such that $ua_{1}s_{2}\leq us_{1}a_{2}$.
We then have $f(ua_{1}s_{2})\leq f(us_{1}a_{2})$ and since 
$S\subset M_{f}$, this gives
$f(a_{1})f(s_{2})\leq f(s_{1})f(a_{2})$ so that
$\tilde{f}(a_{1},s_{1})\leq \tilde{f}(a_{2},s_{2})$. This shows
that $\tilde{f}$ factorizes through $A_{S}$. Now it remains to
show that the obtained map, denoted $g$, is also subadditive
and submultiplicative. We already know that for all
$a,b\in A$, $f(a+b)\leq f(a)+f(b)$ and $f(ab)\leq f(a).f(b)$.
Remark that by definition of $g$ and since $S\subset M_{f}$, we have
$$
g(\frac{a}{s}.\frac{b}{t})=\frac{f(ab)}{f(st)}=\frac{f(ab)}{f(s)f(t)}\leq
\frac{f(a)}{f(s)}.\frac{f(b)}{f(t)}=g(a/s).g(b/t).
$$
We also have $g(a/s+b/t)=g(\frac{at+bs}{st})=g(at+bs)/g(st)$ and
$$g(at+bs)/g(st)\leq g(at)/g(st)+g(bs)/g(st)=g(a)/g(s)+g(b)/g(t).$$
This shows that $g$ is a halo morphism.
\end{proof}

\begin{corollary}
Let $A$ be a halo and $S\subset A$ be a multiplicative
subset.
The localized semiring $A_{S}$ is equipped with a natural
halo structure such that if $f:A\to B$ is a multiplicative halo morphism with
$f(S)\subset B^\times$ then $f$ factorizes uniquely through
the morphism $A\to A_{S}$.
\end{corollary}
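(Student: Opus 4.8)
The plan is to deduce this immediately from Proposition~\ref{prop-localization-multiplicative}, whose construction already furnishes the canonical halo structure on $A_S$; the only thing to verify is that the hypotheses made here on $f$ are a special case of the hypothesis ``$S\subset M_f$'' appearing in that proposition. So nothing new needs to be built: the halo structure on $A_S$ is literally the one described via the pre-order on $A\times S$ in the proof above.

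First I would unwind Definition~\ref{definition-multiplicative}: an element $a\in A$ lies in $M_f$ exactly when $f(a)\in B^\times$ and $f(ab)=f(a)f(b)$ for every $b\in A$. Now let $f:A\to B$ be a multiplicative halo morphism with $f(S)\subset B^\times$, and pick any $s\in S$. The first condition holds by the assumption $f(S)\subset B^\times$, and the second holds for all $b\in A$ precisely because $f$ is multiplicative. Hence every element of $S$ belongs to $M_f$, i.e. $S\subset M_f$. Then Proposition~\ref{prop-localization-multiplicative} applies verbatim and yields the unique factorization $A\to A_S\to B$ through a halo morphism, which is the assertion.

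I do not expect any real obstacle: the corollary is genuinely just the observation that ``multiplicative with $f(S)$ invertible'' is a strictly stronger condition than ``$S\subset M_f$''. The only point worth a one-line remark is uniqueness of the factorization, but this is inherited directly from Proposition~\ref{prop-localization-multiplicative} (the factorizing map is already unique as an order-compatible semiring morphism, since $A\to A_S$ is an epimorphism of semirings), so no separate argument is required.
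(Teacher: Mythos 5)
Your argument is correct and matches what the paper intends: the paper states this as a corollary of Proposition~\ref{prop-localization-multiplicative} without a separate proof, and your observation that multiplicativity of $f$ together with $f(S)\subset B^\times$ immediately gives $S\subset M_f$ (hence the proposition applies) is precisely the intended deduction.
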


\subsection{Tropical halos and idempotent semirings}
\begin{definition}
A halo $A$ is called tropical if it is non-trivial, totally ordered and
$a+b=\max(a,b)$ for all $a,b\in A$.
\end{definition}

If $A$ is a positive totally ordered halo, we denote
$A_{trop}$ the same multiplicative monoid
equipped with its tropical addition
$a+_{trop}b:=\max(a,b)$.  There is a natural
halo morphism $A_{trop}\to A$. Remark that
tropical halos usually don't have the simplification
property
$$x+y\leq x+z\Rightarrow y\leq z.$$

\begin{definition}
Let $A$ be a positive halo. A is called archimedean if
$A$ is not reduced to $\{0,1\}$ and
for all $x>y>0$, there exists $n\in \N$ with
$ny>x$. Otherwise, $A$ is called non-archimedean.
\end{definition}
 
\begin{lemma}
\label{tropical-optimistic}
A tropical halo is positive and non-archimedean.
\end{lemma}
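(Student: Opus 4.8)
The plan is to treat positivity and non-archimedicity separately, both times exploiting the idempotency of tropical addition.

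First I would establish positivity. In any semiring one has $0+a=a$, while the tropical axiom gives $0+a=\max(0,a)$; comparing, $a=\max(0,a)$, so $0$ is the least element of $A$ for the (total) order. The non-triviality hypothesis guarantees that $A$ has at least two elements, hence $0\neq 1$ (a semiring in which $0=1$ is reduced to $\{0\}$, since $a=a\cdot 1=a\cdot 0=0$). Combining $0\le 1$ with $0\neq 1$ gives $0<1$, i.e. $A$ is positive.

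For non-archimedicity, the key observation is that tropical addition is idempotent: for any $y\in A$ and any integer $n\ge 1$ one has $ny=y+\cdots+y=\max(y,\ldots,y)=y$, while $0\cdot y=0$. Consequently, for no pair $x>y>0$ can there exist $n\in\N$ with $ny>x$: for $n\ge 1$ this would force $y>x$, and for $n=0$ it would force $0>x$, both absurd. To conclude that $A$ is non-archimedean I only need to negate the definition of archimedean, whose first clause requires $A\neq\{0,1\}$. If $A=\{0,1\}$ there is nothing to prove; otherwise I pick $r\in A\setminus\{0,1\}$, note $r>0$ because $0$ is minimal, and since $A$ is totally ordered either $r>1$, giving the pair $(x,y)=(r,1)$, or $r<1$, giving the pair $(x,y)=(1,r)$. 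In either case the previous paragraph shows the archimedean property fails, so $A$ is non-archimedean.

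No real obstacle arises here; the only points requiring a little care are the degenerate case $n=0$ in the meaning of $ny$ and the fact that $A=\{0,1\}$ is, by definition, permitted to be (indeed declared) non-archimedean, so it must be handled as a separate (trivial) case rather than forgotten.
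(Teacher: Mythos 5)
Your proof is correct and follows essentially the same route as the paper: positivity comes from combining $0+a=a$ with $0+a=\max(0,a)$, and non-archimedicity comes from the idempotency $ny=y$ for $n\ge 1$. You are a bit more careful than the paper on two minor points — handling the $n=0$ term explicitly and actually exhibiting a pair $x>y>0$ when $A\neq\{0,1\}$ (the paper tacitly assumes one exists) — but these are refinements of the same argument, not a different approach.
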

\begin{proof}
Let $A$ be a tropical halo. This implies that $0\neq 1$
because $A$ is non-trivial.
Suppose that $1\leq 0$ in $A$.
Then $1+0:=\max(1,0)=0\neq 1$, which
is a contradiction with the fact that $A$ is a semiring.
If $A$ is reduced to $\{0,1\}$, then it is non-archimedean.
Now suppose that $A$ is not reduced to $\{0,1\}$.
If $a>b>0$ then $a>n.b=b$ for all $n\in \N$ so that $A$ is
non-archimedean.
\end{proof}

We will now show that tropical halos and idempotent semirings
are related.
\begin{definition}
A semiring $A$ is called idempotent if $a+a=a$ for all $a\in A$.
\end{definition}

Let $A$ be an idempotent semiring. The relation
$$a\leq b\Leftrightarrow a+b=b$$
is a partial order relation on $A$ that gives $A$
a halo structure denoted $A_{halo}$. This will be called
the natural halo structure of the idempotent
semiring.

\begin{example}
The semiring $\R_{+,trop}$ of positive real numbers with usual multiplication
and tropical addition given by $a+_{trop}b=\max(a,b)$ is an idempotent
semiring which is tropical. The semiring $\R_{+,trop}[X]$ of
polynomials is idempotent but not tropical because
$X$ and $1$ cannot be compared in it:
$X+1\neq X$ and $X+1\neq 1$.
\end{example}

\begin{definition}
Let $K$ be a tropical halo and $S$ be a set.
Then the polynomial semiring $K[S]$ is idempotent and is thus
equipped with a natural halo structure. This halo will be called
the halo of polynomials on $K$.
\end{definition}

The following Lemma shows that the order of a tropical halo is
of a purely algebraic nature.

\begin{lemma}
The functor $A\mapsto A_{halo}$ induces an equivalence
of categories between idempotent semirings whose natural order is total
and tropical halos with multiplicative morphisms.
\end{lemma}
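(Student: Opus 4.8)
The plan is to exhibit an explicit quasi-inverse to the functor $A\mapsto A_{halo}$, namely the functor $U$ sending a tropical halo to its underlying semiring, and to check that the two functors are mutually inverse; in fact they will turn out to be inverse isomorphisms of categories, which is stronger than an equivalence. The proof splits into three parts: well-definedness of $(-)_{halo}$ on the relevant subcategories, well-definedness of $U$, and the verification that $U\circ(-)_{halo}$ and $(-)_{halo}\circ U$ are identity functors. Throughout, the source category is understood to consist of the \emph{non-trivial} idempotent semirings (i.e. $0\neq 1$) with total natural order and semiring morphisms, to match the non-triviality built into the definition of a tropical halo.

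First I would check that if $A$ is such a semiring, then $A_{halo}$ is a tropical halo. That $A_{halo}$ is a halo with a total order is immediate from the hypothesis; non-triviality of the order follows from $0\neq 1$ since any two distinct elements are comparable. The only point needing an argument is that $a+b=\max(a,b)$: totality of the natural order says that for all $a,b$ either $a+b=b$ or (using commutativity, since $b\leq a\Leftrightarrow a+b=a$) $a+b=a$, i.e. $a+b\in\{a,b\}$, and in either case $a+b$ is the supremum of $a$ and $b$. On morphisms, a semiring morphism $f\colon A\to B$ between such semirings is automatically increasing for the natural orders (from $a+b=b$ one gets $f(a)+f(b)=f(b)$) and satisfies $f(0)=0$, $f(1)=1$, $f(ab)=f(a)f(b)$ and $f(a+b)=f(a)+f(b)$, hence is a multiplicative halo morphism. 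So $(-)_{halo}$ restricts to a functor into tropical halos with multiplicative morphisms.

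Next I would set up $U$. For a tropical halo $K$, the underlying semiring is idempotent ($a+a=\max(a,a)=a$), and its natural order satisfies $a+b=b\Leftrightarrow\max(a,b)=b\Leftrightarrow a\leq b$, so it coincides with the given total order of $K$; in particular it is total and $K$ is non-trivial. The one genuinely non-formal step — which I expect to be the only real point of the proof — is that a multiplicative halo morphism $f\colon K\to L$ between tropical halos is automatically additive, hence a semiring morphism: since $f$ is increasing we have $f(a),f(b)\leq f(\max(a,b))$, so $\max(f(a),f(b))\leq f(a+b)$, while subadditivity gives $f(a+b)\leq f(a)+f(b)=\max(f(a),f(b))$, whence $f(a+b)=f(a)+f(b)$. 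Thus $U$ is a well-defined functor from tropical halos with multiplicative morphisms to non-trivial idempotent semirings with total natural order.

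Finally, both composites are the identity on the nose: $U(A_{halo})$ is $A$ with its original semiring structure, and $(U(K))_{halo}$ equals $K$ because the natural order of $U(K)$ was just shown to be the order of $K$ while the addition $\max$ is unchanged; on morphisms both functors act as the identity on the underlying set-map, so both composites are the identity. This yields an isomorphism, a fortiori an equivalence, of categories. The only substantive ingredients are the two highlighted observations — that a total natural order forces tropical addition, and that increasing plus subadditive forces additivity on tropical halos — and everything else is routine bookkeeping.
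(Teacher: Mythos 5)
Your proof is correct and rests on exactly the same three observations as the paper's: a total natural order on an idempotent semiring forces $a+b=\max(a,b)$; semiring morphisms between idempotent semirings are automatically increasing; and multiplicative halo morphisms between tropical halos are automatically additive. The only differences are organizational — you package the argument as an explicit inverse functor (underlying semiring) giving an isomorphism of categories on the nose, whereas the paper checks essential surjectivity, fullness, and faithfulness separately — plus a small gain in precision: in the fullness/additivity step the paper's proof says only that ``$f$ is increasing so that $f(\max(a,b))=\max(f(a),f(b))$,'' but monotonicity alone only gives $f(\max(a,b))\geq\max(f(a),f(b))$; the reverse inequality needs subadditivity, which you invoke explicitly. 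You also correctly flag the non-triviality mismatch (tropical halos are required to be non-trivial, while ``idempotent semiring with total natural order'' would admit $\{0\}$) and restrict the source category accordingly — a point the paper leaves implicit.
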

\begin{proof}
First remark that if $f:A\to B$ is a semiring morphism between
two idempotent semirings, then $a\leq b$ in $A$
implies $a+b=b$ so that $f(a)+f(b)=f(b)$ and $f(a)\leq f(b)$ in $B$,
which means that $f$ is an increasing map. This shows that
the map $A\mapsto A_{halo}$ is a functor.
If $A$ is a tropical halo, then its underlying semiring is idempotent.
The natural order of this semiring is equal to the given order
and this last one is total. This shows that the functor
$A\mapsto A_{halo}$ is essentially surjective
from the category of idempotent semirings with total natural
order to tropical halos. Let $A$ and $B$ be two tropical halos.
A halo morphism $f:A\to B$ is increasing so that
$f(\max(a,b))=\max(f(a),f(b))$ and $f$ is
a semiring morphism. This shows that the functor
$A\mapsto A_{halo}$ is full. It is also faithful because
$A$ and $A_{halo}$ have the same underlying set.
\end{proof}

\begin{remark}
Usual semirings with their trivial order and tropical halos
are two subcategories of the category $\Halos^m$ of halos
with multiplicative morphisms that share
a common feature: all their morphisms are strictly additive,
i.e., semiring morphisms.
\end{remark}

\begin{definition}
Let $\Gamma$ be a multiplicative totally ordered monoid and
$R_{\Gamma}:=\{0\}\cup \Gamma$. We equip $R_{\Gamma}$
with a tropical halo structure by declaring that
$0$ is smaller than every element of $\Gamma$,
annihilates every element of $\Gamma$ by multiplication,
and that $a+b=\max(a,b)$ for all $a,b\in R_{\Gamma}$.
The halo $R_{\Gamma}$ is called the tropical
halo of $\Gamma$. If $\Gamma$ is a group,
the tropical halo $R_{\Gamma}$ is an aura.
\end{definition}

\begin{example}
Let $\Gamma$ be a totally ordered group and $H\subset \Gamma$
be a convex subgroup (i.e. $g<h<k$ in $\Gamma$ and $g,k\in H$ implies
$h\in H$). Then $\pi_{H}:R_{\Gamma}\to R_{\Gamma/H}$ is a surjective
halo morphism between tropical auras.
\end{example}


\begin{example}
Let $R_{\{1\}}=\{0,1\}$ be the tropical halo on the trivial group.
It is equipped with the order given by $0\leq 1$, idempotent
addition given by $1+1=1$ and usual multiplication. It is the
initial object in the category of positive halos (in which $0\leq 1$),
so in particular in the category of tropical halos.
Indeed, if $A\neq 0$ is such a halo, then the injective map
$f:R_{\{1\}}\to A$ that sends $0$ to $0$ and $1$ to $1$ is
a halo morphism because $0\leq 1$ implies $f(1+1)=f(1)=1\leq 1+1=f(1)+f(1)$.
\end{example}

\begin{definition}
A halo is called trivial if it is reduced to $\{0\}$ or equal to the tropical
halo $R_{\{1\}}$.
\end{definition}

\subsection{Halos with tempered growth}
If $A$ and $R$ are two halos, halo morphisms $|.|:A\to R$ are not easy
to compute in general. We now introduce a condition that can be imposed
on $R$ to make this computation easier. This condition is directly inspired
by Ostrowski's classification of multiplicative seminorms on $\Z$.
\begin{definition}
A halo $R$ has tempered growth if for all non-zero polynomial $P\in \N[X]$,
$$x^n\leq P(n)\textrm{ in }R\textrm{ for all }n\in \N\textrm{ implies }x\leq 1.$$
\end{definition}

\begin{lemma}
A tropical halo has tempered growth.
\end{lemma}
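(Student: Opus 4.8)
The plan is to exploit the degeneracy of addition in a tropical halo: there $+$ is $\max$, so the image of $\N$ in $R$ collapses, and every polynomial in $\N[X]$ evaluated at a positive integer becomes the single element $1\in R$. The tempered growth condition will then reduce to the tautology ``$x\le 1\Rightarrow x\le 1$'', obtained by specializing to $n=1$.

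First I would record an elementary observation. In a tropical halo one has $1+1=\max(1,1)=1$, hence by induction the canonical semiring morphism $\N\to R$ sends every integer $k\ge 1$ to $1\in R$ (and $0$ to $0$). Consequently, for a non-zero polynomial $P=\sum_i a_i X^i\in\N[X]$ and any integer $n\ge 1$, the element $P(n)\in R$ equals $1$: indeed the integer $P(n)=\sum_i a_i n^i$ is $\ge 1$, since some coefficient $a_i$ is $\ge 1$, and its image in $R$ is therefore $1$ by the preceding remark. (Equivalently one may evaluate $P$ directly in $R$ using the images of the $a_i$ and of $n$; the two readings agree precisely because $+$ is idempotent in $R$.)

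Now let $x\in R$ satisfy $x^n\le P(n)$ in $R$ for all $n\in\N$. Specializing to $n=1$ and using $P(1)=1$ in $R$ gives $x=x^1\le 1$, which is the desired conclusion. (If $0\in\N$ and $P$ has zero constant term, the hypothesis is vacuous, since $n=0$ would force $1=x^0\le P(0)=0$, contradicting the positivity of $R$ established in Lemma~\ref{tropical-optimistic}; in every other case the argument above applies.) I expect no genuine obstacle here: the only point needing a line of care is fixing the meaning of ``$P(n)$ in $R$'' and checking that the two natural interpretations coincide because addition in $R$ is $\max$.
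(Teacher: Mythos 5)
Your proof is correct and follows essentially the same route as the paper: both observe that in a tropical halo the image of $\N$ collapses to $\{0,1\}$ because $1+1=\max(1,1)=1$, deduce $P(1)=1$ in $R$ for any non-zero $P\in\N[X]$, and conclude by specializing the hypothesis $x^n\le P(n)$ to $n=1$. The parenthetical about the $n=0$, zero-constant-term case is harmless but unnecessary, since the $n=1$ specialization already settles every case.
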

\begin{proof}
Let $R$ be a tropical halo. Then $n=1+_{trop}\cdots+_{trop}1=1$
in $R$ for all $n\in \N$ so that
$P(n)=1$ in $R$ for all $n\in \N$ and $P\in \N[X]$ non-zero. Let $P$
be such a polynomial.
Suppose that $a^n\leq P(n)$ for all $n\in\N$. In particular, we have
$a\leq P(1)=1$ which shows that $R$ has
tempered growth.
\end{proof}

\begin{lemma}
\label{archimedean-tempered}
Let $R$ be a non-trivial totally ordered positive aura in which
$x>y$ implies that there exists $t>0$ such that $x=y+t$.
Suppose moreover that $\N$ injects in the underlying semiring of $R$.
If $R$ is archimedean then $R$ has tempered growth.
\end{lemma}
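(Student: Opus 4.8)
The plan is to argue by contradiction, exploiting that $R$ is totally ordered: assume $x^{n}\le P(n)$ in $R$ for every $n\in\N$ but $x>1$, and then exhibit a single $N$ with $x^{N}>P(N)$, contradicting the hypothesis. Since $x>1$, the difference hypothesis (applied with $y=1$) produces $t>0$ with $x=1+t$. The key idea is then to expand a high power of $x$ by the binomial formula and keep a cleverly chosen single term.

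Concretely, write $d=\deg P$. The binomial identity $(1+t)^{n}=\sum_{k=0}^{n}\binom{n}{k}t^{k}$ holds in any commutative semiring (the integer coefficients $\binom{n}{k}$ being viewed inside $R$ through the embedding $\N\hookrightarrow R$), and since every term is $\ge 0$ in the positive halo $R$ we may discard all but the one of index $d+1$:
$$x^{n}=(1+t)^{n}\ \ge\ \binom{n}{d+1}\,t^{d+1}\qquad(n\ge d+1).$$
The point of the index $d+1$ is that $\binom{n}{d+1}$ behaves like a degree-$(d+1)$ polynomial in $n$, one degree higher than $P$. Using that $R$ is an aura one may divide by $(d+1)!$, and for $n\ge 2(d+1)$ each of the $d+1$ factors of $n(n-1)\cdots(n-d)$ is $\ge n/2$, so that $\binom{n}{d+1}\ge n^{d+1}/\big(2^{d+1}(d+1)!\big)$ and hence
$$x^{n}\ \ge\ c\,n^{d+1},\qquad c:=\frac{t^{d+1}}{2^{d+1}(d+1)!}>0,$$
for all $n\ge 2(d+1)$. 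The auxiliary facts used here — that $a\ge b\ge 0$ is preserved by multiplication and by taking powers, that a product of positive elements is positive hence invertible, and that multiplication by a positive element preserves strict inequalities — are routine from the halo axioms together with $R$ being an aura.

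On the other side, bound $P$ crudely: letting $A\in\N$ be the sum of the coefficients of $P$, one has $A\ge 1$ (as $P\neq 0$) and $P(n)\le A\,n^{d}$ for $n\ge 1$, since $n^{i}\le n^{d}$ for $0\le i\le d$. Now the archimedean hypothesis enters: applied to the positive elements $c$ and $A$ — and using $1+1>1$, valid because $\N\hookrightarrow R$, to dispose of the degenerate case $A\le c$ — it yields some $N\in\N$, which we may also take $\ge 2(d+1)$, with $cN>A$. Multiplying this strict inequality by the positive element $N^{d}$ and chaining the estimates gives
$$x^{N}\ \ge\ c\,N^{d+1}\ =\ (cN)\,N^{d}\ >\ A\,N^{d}\ \ge\ P(N),$$
contradicting $x^{N}\le P(N)$. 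Hence $x\le 1$, which is precisely the statement that $R$ has tempered growth.

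The one genuinely delicate point — the step I would watch most carefully — is the assertion that $\binom{n}{d+1}t^{d+1}$ eventually dominates $P(n)$: in an abstract ordered semifield one cannot simply invoke ``exponential beats polynomial'', and even ``degree $d+1$ beats degree $d$'' has to be pried out by hand. This is exactly where the archimedean axiom is indispensable — for a non-archimedean $R$ (a tropical aura, say) all these natural numbers collapse, so the estimate fails — and it is also where one must check that the crude lower bound on $\binom{n}{d+1}$ and upper bound on $P(n)$ are legitimate \emph{inside} $R$: the embedding $\N\hookrightarrow R$ and the semifield structure are what make the manipulations with $(d+1)!$, with $2^{d+1}$, and with the factors $n-j$ valid rather than merely valid over $\R$.
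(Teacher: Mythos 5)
Your proof is correct and rests on the same core idea as the paper's---write $x=1+t$ with $t>0$, single out the binomial term $\binom{n}{d+1}t^{d+1}$, and exploit that its degree in $n$ exceeds that of $P$---but your bookkeeping is cleaner. The paper bounds $P(n)\le (d+1)n^{d+1}$ by first using archimedeanness to dominate the coefficients $a_i$ by $n$; this puts both sides of the comparison at degree $d+1$, so it has to keep track of a residual linear factor, treat a degenerate case in which $t^p$ might equal the constant bound exactly, and then appeal to archimedeanness a second time at the end. You instead bound $P(n)\le A\,n^{d}$ by a fixed constant $A$ with no appeal to archimedeanness, obtain the clean lower estimate $x^{n}\ge c\,n^{d+1}$, and use archimedeanness exactly once, on the pair $(A,c)$, where it immediately yields a strict inequality and the edge case never arises. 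Your closing caveats are also exactly the right ones to flag: the pointwise bounds ($n-j\ge n/2$, $n^{i}\le n^{d}$, etc.) must be validated inside $R$, which works because $0<1$ in a positive halo propagates to $0<1<2<\cdots$, making $\N\hookrightarrow R$ order-preserving, because the aura (semifield) structure inverts $t^{d+1}$ and all positive integers, and because strictness of $<$ survives multiplication by a unit.
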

\begin{proof}
Let $R$ be as in the hypothesis of this Lemma.
Let $P\in \N[X]$ be a non-zero polynomial of degree $d$
and suppose there exists $x>1$ such that $x^n\leq P(n)$ in $R$ for all $n\neq 0$.
By hypothesis, we can write $x=1+t$ with $t>0$ and
$x^n=(1+t)^n=1+nt+\frac{n(n-1)}{2}t^2+\cdots$.
The components of this sum are all positive so that
$\frac{n!}{p!(n-p)!}t^p\leq P(n)$.
Write $P(n)=\sum a_{i}n^i$. Since $R$ is archimedean,
we can choose $n=n.1>\max(a_{i})$. Now since $R$ is a totally ordered
aura, $n^i\leq n^j$ for all $i\leq j$ so that $P(n)=\sum a_{i}n^i\leq (d+1)n^{d+1}$.
We have proved that $\frac{n!}{p!(n-p)!}t^p\leq (d+1)n^{d+1}$ in $R$
for all $n$ big enough in $\N$. If we take $n>2p$ and $p=d+1$,
we get
$$n(n-1)(n-2)\cdots(n-d)t^p=\frac{n!}{(n-p)!}t^p\leq (d+1)p!n^{d+1},$$
where the left product has $d+2$ terms.
Remark now that $n-i\geq n-(d+1)$ implies $\frac{n}{n-i}\leq \frac{n}{n-(d+1)}$
in $R$.
We also have $n>2(d+1)$ implies $n-(d+1)\geq \frac{n}{2}$ in $R$, so that
$\frac{n}{n-i}\leq 2$ for $i\leq d+1$, which shows that
$$(n-p)t^p\leq (d+1)p!2^{d+1}.$$
Since $t>0$ and $R$ is a totally ordered aura, $t^p>0$.
Moreover, the equality $t^p=(d+1)p!2^{d+1}$ for all successive $p$ is
not possible. Indeed, this would give $t=\frac{t^{p+1}}{t^p}=\frac{(p+1)!}{p!}=p+1$
and $t=p+2$ for a convenient $p$ so that
$p+1=p+2$ which is a contradiction with the fact that $\N$ injects in $R$.
We thus have $t^p<(d+1)p!2^{d+1}$ and since $R$ is archimedean,
we know that there exists $n$ big enough such that
$(n-p)t^p>(d+1)p!.2^{d+1}$. This gives a contradiction.
\end{proof}

\begin{corollary}
The aura $\R_{+}$ has tempered growth.
\end{corollary}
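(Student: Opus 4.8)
The plan is to verify that $\R_{+}$ meets every hypothesis of Lemma~\ref{archimedean-tempered} and then invoke that lemma directly; there is essentially nothing further to do. First I would check that $\R_{+}$, with its usual laws and ordering, is a non-trivial totally ordered positive aura: it is a semifield, it is totally ordered, one has $0<1$, and it is certainly not reduced to $\{0,1\}$. Next, the ``subtraction'' hypothesis of the lemma holds trivially, since whenever $x>y$ in $\R_{+}$ the real number $t:=x-y$ satisfies $t>0$ and $x=y+t$. Finally, $\N$ injects into the underlying semiring of $\R_{+}$, which is obvious.

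The last point to confirm is that $\R_{+}$ is archimedean in the sense of the definition given just before Lemma~\ref{tropical-optimistic}, namely that for all $x>y>0$ there is some $n\in\N$ with $ny>x$. This is precisely the classical Archimedean property of the real numbers: choose any integer $n$ with $n>x/y$. With all hypotheses verified, Lemma~\ref{archimedean-tempered} yields at once that $\R_{+}$ has tempered growth.

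I do not anticipate any genuine obstacle here: the real work — the binomial estimate $\binom{n}{p}t^{p}\leq x^{n}=(1+t)^{n}$ together with the archimedean comparison forcing a contradiction — has already been done once and for all inside the proof of Lemma~\ref{archimedean-tempered}, and this corollary is a pure instantiation of that lemma. The same verification applies word for word to the positive cone $R_{\geq 0}$ of any archimedean real closed field.
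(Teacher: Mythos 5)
Your proposal is correct and matches the paper's primary argument, which simply invokes Lemma~\ref{archimedean-tempered}; you are being somewhat more careful than the paper in explicitly checking each hypothesis. The paper additionally offers a one-line alternative proof via $x \leq \lim_{n\to\infty}\sqrt[n]{P(n)} = 1$, but this is just a bonus and your route is the one the paper cites first.
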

\begin{proof}
This follows directly from Lemma \ref{archimedean-tempered}.
We can also give a more direct proof using a little bit of real
analysis. Let $x\in \R_{+}$ and $P\in \N[X]$ be such that
$x^n\leq P(n)$ for all $n\in \N$. Then taking $n$-th root
and passing to the limit, we get
$$x\leq \lim_{n\to \infty} \sqrt[n]{P(n)}=1.$$
\end{proof}

We will see in the next Section some nice examples of
archimedean auras in which $\N$ embeds but that have
non-tempered growth, showing that the hypothesis
of Lemma \ref{archimedean-tempered} are optimal.

\begin{remark}
We know from Lemma \ref{tropical-optimistic} that a tropical halo $A$ is
non-archimedean. This shows that being of tempered growth is not
equivalent to being archimedean.
\end{remark}

\subsection{Lexicographic products}
Let $R_{1},\dots,R_{n}$ be a finite family of positive auras.
Equip $\prod R_{i}$ with its lexicographic order. Remark
that $\prod R_{i}^\times\subset\prod R_{i}$ is a multiplicative
submonoid that is stable by addition because $a,b>0$ in $R_{i}$
implies $a+b>0$. We extend this embedding to
$\{0\}\cup \prod R_{i}^\times$ sending $0$ to $(0,\dots,0)$.
We will denote $\left[\prod\right] R_{i}:=\{0\}\cup \prod R_{i}^\times$
with its halo structure induced by its embedding into $\prod R_{i}$.
This halo is automatically an aura.

\begin{definition}
Let $R_{1},\dots,R_{n}$ be a finite list (i.e. ordered family) of positive auras.
The aura $\left[\prod\right] R_{i}$ is called the lexicographic
product of the family. If $R$ is a positive aura, we denote
$R^{[n]}$ the lexicographic product $\left[\prod_{1,\dots,n}\right] R$.
If $R$ and $S$ are positive aura, we denote $R\lextimes S$
their lexicographic product.
\end{definition}

Remark that if the $R_{i}$ are totally ordered, then so is $\left[\prod\right] R_{i}$.

\begin{lemma}
If $n>1$, the aura $\R_{+}^{[n]}$ is archimedean but it does not
have tempered growth.
\end{lemma}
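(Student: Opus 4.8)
The plan is to unwind the definition of $\R_{+}^{[n]}$ and then establish the two assertions independently. As a set, $\R_{+}^{[n]} = \{0\} \cup (\R_{>0})^{n}$, equipped with the lexicographic order (with $0$ as least element), with addition and multiplication coordinatewise on $(\R_{>0})^{n}$ and $0$ absorbing; in particular the canonical image of an integer $k$ in $\R_{+}^{[n]}$ is the constant tuple $(k,\dots,k)$, so that for any $P \in \N[X]$ one has $P(k) = (P(k),\dots,P(k))$ inside $\R_{+}^{[n]}$. Since $\R_{+}$ is totally ordered and positive, so is $\R_{+}^{[n]}$; in particular $x \not\leq 1$ is equivalent to $x > 1$, which is the form I will use in the second part.

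First I would check archimedeanity. The halo $\R_{+}^{[n]}$ is plainly not reduced to $\{0,1\}$. Given $x = (x_{1},\dots,x_{n}) > y = (y_{1},\dots,y_{n}) > 0$, both tuples are nonzero, hence lie in $(\R_{>0})^{n}$, so all their coordinates are strictly positive; moreover $x > y$ in the lexicographic order forces $x_{1} \geq y_{1}$. By the usual archimedean property of $\R$ there is $k \in \N$ with $k y_{1} > x_{1}$. Since $ky = (k y_{1},\dots,k y_{n})$, its first coordinate strictly exceeds that of $x$, so $ky > x$ lexicographically. This is precisely the archimedean condition.

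For the failure of tempered growth — the one place where $n > 1$ is used — I would exhibit a single counterexample to the defining implication. Take the nonzero constant polynomial $P = 2 \in \N[X]$ and let $x \in \R_{+}^{[n]}$ be the tuple whose first coordinate is $1$, whose second coordinate is $2$, and whose remaining coordinates (if any) are $1$; this is possible because $n \geq 2$. Then $x > 1$, since the first coordinates agree while $2 > 1$ in the second. On the other hand, for every $m \in \N$ one has $x^{m} = (1, 2^{m}, 1,\dots,1)$, while $P(m) = (2,\dots,2)$; comparing first coordinates, $1 < 2$, so $x^{m} < P(m)$ and \emph{a fortiori} $x^{m} \leq P(m)$. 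Hence $x^{m} \leq P(m)$ holds for all $m$ while $x \not\leq 1$, so $\R_{+}^{[n]}$ does not have tempered growth.

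I do not expect a real obstacle here. The only points needing a little care are verifying that the chosen $x$ genuinely lies in $\R_{+}^{[n]} = \{0\} \cup (\R_{>0})^{n}$ and is $> 1$ rather than $= 1$, and correctly locating the integers — and hence $P(m)$ — inside $\R_{+}^{[n]}$ as constant tuples, so that the lexicographic comparison with $x^{m}$ is the evident one. Conceptually the mechanism is simply that in a lexicographic product the lower coordinates are \emph{infinitesimal} relative to the first, so an element like $(1,2,1,\dots,1)$, though strictly larger than $1$, has all of its powers trapped below the integer $2$.
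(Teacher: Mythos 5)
Your proof is correct and follows essentially the same route as the paper's: for archimedeanity you compare first coordinates and invoke the archimedean property of $\R$, and for the failure of tempered growth you exhibit an element with first coordinate $1$ and a later coordinate $>1$, whose powers are lexicographically trapped below the constant tuple $2$. The only cosmetic differences are that you place the coordinate $2$ in the second slot and take $P=2$ constant, whereas the paper places a coordinate $a>1$ in the last slot and uses $P(X)=X+2$; the mechanism is identical.
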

\begin{proof}
Suppose that $0<(x_{i})<(y_{i})$ in $\R_{+}^{[n]}$.
Then at least $0<x_{1}\leq y_{1}$ in $\R_{+}$ so that
there exists $n\in \N$ such that $nx_{1}>y_{1}$, which implies
$n(x_{i})>(y_{i})$ in $\R_{+}^{[n]}$. This shows that
$\R_{+}^{[n]}$ is archimedean. Now let $a>1$ in $\R$.
Then $(1,\dots,1,a)>(1,\dots,1)$ in $\R_{+}^{[n]}$ but since
$1<n+2$,
$(1,\dots,1,a)^n=(1,\dots,1,a^n)<(n+2,\dots,n+2)=n+2\in \R_{+}^{[n]}$.
This shows that $\R_{+}^{[n]}$ does not have tempered growth.
\end{proof}

\begin{lemma}
\label{trop-lextimes-tempered}
Let $K$ be a tropical aura and $R$ be an aura that has tempered
growth.
The aura $K\lextimes R$ has tempered growth. 
\end{lemma}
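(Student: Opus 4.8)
The plan is to deduce tempered growth of $K\lextimes R$ from tempered growth of $R$, exploiting the collapse of natural numbers in the tropical factor $K$. First I would record the standing facts: $K\lextimes R$ is defined only when $R$ is a positive aura, and $K$ is positive by Lemma~\ref{tropical-optimistic}; consequently $K\lextimes R$ is again a positive aura (indeed $1=(1,1)>(0,0)=0$ in the lexicographic order, since $1>0$ in $K$), so $0\leq 1$ holds in it. The crucial computation is this: since $K$ is tropical, $n\cdot 1=\max(1,\dots,1)=1$ in $K$ for every integer $n\geq 1$; hence for any nonzero $P\in\N[X]$ and any integer $n\geq 1$ one has $P(n)\geq 1$ in $\N$ and, evaluating $P$ at $n$ inside $K\lextimes R$,
$$P(n)=P(n)\cdot(1,1)=\bigl(P(n)\cdot 1,\ P(n)\cdot 1\bigr)=\bigl(1,\ P_R(n)\bigr),$$
where $P_R(n)$ is the value of $P$ at $n$ computed in $R$. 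Thus the polynomial barrier $P(n)$ always has first coordinate $1$.

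\textbf{Main step.}
Next I would take a nonzero element $x=(k,r)\in K\lextimes R$, with $k\in K^\times$ and $r\in R^\times$, satisfying $x^n\leq P(n)$ in $K\lextimes R$ for all $n\in\N$, and distinguish three cases according to the position of $k$ relative to $1$ in $K$ — legitimate since $K$ is totally ordered, being tropical. If $k<1$, then $x=(k,r)<(1,1)=1$ in the lexicographic order, and we are done. If $k>1$, then $k^n\geq k>1$ for all $n\geq 1$, so the first coordinate of $x^n=(k^n,r^n)$ is strictly larger than that of $P(n)=(1,P_R(n))$, contradicting $x^n\leq P(n)$; so this case does not occur. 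In the remaining case $k=1$: for each $n\geq 1$ the inequality $(1,r^n)=x^n\leq P(n)=(1,P_R(n))$ has equal first coordinates, hence forces $r^n\leq P_R(n)$ in $R$; and the case $n=0$ poses no obstruction (there $x^0=1\leq P(0)$ forces the constant term $P(0)$ to be $\geq 1$ in $\N$, whence $1\leq P_R(0)$ in $R$), so $r^n\leq P_R(n)$ in $R$ for every $n\in\N$. Applying the tempered-growth hypothesis on $R$ to the nonzero polynomial $P$ gives $r\leq 1$ in $R$, and therefore $x=(1,r)\leq(1,1)=1$.

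\textbf{Conclusion and the hard part.}
Together with the trivial case $x=0\leq 1$, the three cases show that $x^n\leq P(n)$ for all $n\in\N$ (with $P\in\N[X]$ nonzero) forces $x\leq 1$, i.e.\ $K\lextimes R$ has tempered growth. I expect no genuine obstacle here; the only points demanding care are that the lexicographic order permits comparing second coordinates once the first coordinates agree, that $P(n)\geq 1$ so that the displayed identity is an identity of genuine elements, and the handling of the degenerate inputs $x=0$, $n=0$ and constant $P$. All the real content is the identity $n\cdot 1=1$ in the tropical halo $K$, which makes every polynomial bound invisible in the first coordinate and thereby confines the whole question to the factor $R$.
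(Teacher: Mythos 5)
Your proof is correct and follows essentially the same route as the paper's: the crux in both is the collapse $n\cdot 1=1$ in the tropical factor $K$, so that $P(n)=(1,P_R(n))$ in $K\lextimes R$, after which comparison of first coordinates pins down $k=1$ and the question reduces to tempered growth of $R$ via the second coordinate. The only difference is organizational — the paper argues by contradiction (assume $(x,y)>1$, derive $x=1$, then $y>1$, then use tempered growth of $R$ to contradict), whereas you run a direct trichotomy on $k$ versus $1$; this buys nothing substantive but does make you handle a couple of degenerate inputs ($x=0$, $n=0$) explicitly, which the contradiction framing sidesteps.
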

\begin{proof}
Let $P\in \N[X]$ be a non-zero polynomial and $(x,y)\in K\lextimes R$
be such that $(x,y)>1=(1,1)$ and $(x,y)^n\leq (P(n),P(n))$
for all $n$. Remark that $P(n)=1\in K$ for all $n$.
$(x,y)^1\leq (P(1),P(1))=(1,P(1))$ implies $x\leq 1$. If $x<1$,
then $(x,y)<(1,1)$ which is a contradiction. If $x=1$ then
$y>1$. Remark that $(x,y)^n=(1,y^n)\leq (P(n),P(n))=(1,P(n))$ for all $n$
implies $y^n\leq P(n)$ for all $n$. Since $R$ has tempered growth,
this means that $y\leq 1$ in $R$, which is a contradiction.
We thus have proved that
$K\lextimes R$ has tempered growth.
\end{proof}

\begin{corollary}
The aura $\R_{+,trop}\lextimes \R_{+}$ has tempered growth.
\end{corollary}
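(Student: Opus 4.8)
The plan is to deduce this immediately from Lemma \ref{trop-lextimes-tempered}, since $\R_{+,trop}\lextimes \R_{+}$ is precisely the lexicographic product $K\lextimes R$ with $K=\R_{+,trop}$ and $R=\R_{+}$. So the only two things to check are the hypotheses of that lemma: that $\R_{+,trop}$ is a tropical aura, and that $\R_{+}$ is an aura with tempered growth.

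For the first point, I would recall that $\R_{+,trop}$ is the semifield $\R_{+}$ with its usual multiplication but with tropical addition $a+_{trop}b=\max(a,b)$, equipped with the usual total order on $\R_{+}$; this is exactly the tropical aura attached to the multiplicative totally ordered group $\Gamma=\R_{>0}$ together with the absorbing element $0$ (it is non-trivial, totally ordered, $a+b=\max(a,b)$, and its underlying semiring is a semifield). For the second point, I would simply invoke the Corollary stating that the aura $\R_{+}$ has tempered growth (itself a consequence of Lemma \ref{archimedean-tempered}, or of the elementary $\lim_{n}\sqrt[n]{P(n)}=1$ argument).

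Combining these, Lemma \ref{trop-lextimes-tempered} applies verbatim and yields that $\R_{+,trop}\lextimes \R_{+}$ has tempered growth. There is no real obstacle here: the statement is a direct specialization of the preceding lemma, and the only work is bookkeeping — identifying the two factors with the abstract $K$ and $R$ of the lemma and citing the already-established facts that $\R_{+,trop}$ is tropical and $\R_{+}$ has tempered growth. If one wanted to be fully self-contained, one could also unwind the proof of Lemma \ref{trop-lextimes-tempered} in this concrete case: from $(x,y)>(1,1)$ and $(x,y)^n\le(P(n),P(n))$ one gets, reading the first coordinate at $n=1$, that $x\le 1$ in $\R_{+,trop}$, hence (since a strict inequality $x<1$ would contradict $(x,y)>(1,1)$) that $x=1$ and $y>1$, and then the second coordinate gives $y^n\le P(n)$ for all $n$, which forces $y\le 1$ in $\R_{+}$ by tempered growth of $\R_{+}$, a contradiction. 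Either route closes the proof.
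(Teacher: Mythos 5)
Your proposal is correct and matches the paper's (implicit) reasoning exactly: the corollary is a direct application of Lemma \ref{trop-lextimes-tempered} with $K=\R_{+,trop}$ and $R=\R_{+}$, using the preceding corollary that $\R_{+}$ has tempered growth. The extra unwinding you include is a faithful specialization of the lemma's proof and adds nothing problematic.
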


\section{Seminorms, valuations and places}
Some proofs of the forthcoming Sections are very similar to their classical
version, which one can find in E. Artin's book \cite{E-Artin1} and in
Bourbaki \cite{Bourbaki-AC-5-6}, Alg\`ebre Commutative, Chap. VI.

\subsection{Generalizing seminorms and valuations}
\begin{definition}
A generalized seminorm
on a ring $A$ is a halo morphism from
$A$ to a positive totally ordered aura $R$, i.e., a map $|.|:A\to R$
from $A$ to a positive totally ordered semifield $R$ such that
\begin{enumerate}
\item $|1|=1$, $|0|=0$,
\item for all $a,b\in A$, $|ab|\leq |a|.|b|$,
\item for all $a,b\in A$, $|a+b|\leq |a|+|b|$.
\end{enumerate}
A generalized seminorm $|.|:A\to R$ is called
\begin{itemize}
\item square-multiplicative if $|a^2|=|a|^2$,
\item power-multiplicative if $|a^n|=|a|^n$ for all $a\in A$ and all $n\in \N$,
\item tempered if $R$ has tempered growth,
\item non-archimedean if
$$|a+b|\leq \max(|a|,|b|)$$
for all $a,b\in A$,
\item pre-archimedean if
$$|a+b|\leq \max(|2|,1).\max(|a|,|b|)$$
for all $a,b\in A$.
\end{itemize}
\end{definition}

We will often omit ``generalized'' in ``generalized seminorm''.

\begin{remark}
Let $A$ be a ring. A generalized seminorm on
$A$ with values in $\R_{+}$ is exactly a seminorm on $A$ in the usual sense.
A multiplicative generalized seminorm on $A$ with value in a tropical aura
$R_{\Gamma}=\{0\}\cup\Gamma$ is exactly a valuation in Krull's sense
(multiplicative notation).
\end{remark}

Let $A$ be a ring and $|.|:A\to R$ be a generalized seminorm on $A$.
Then $\Ker(|.|)$ is an ideal in $A$. Indeed, if $|a|=0$ and $|b|=0$,
then $|a+b|\leq |a|+|b|=0$ so that $|a+b|=0$. If $|a|=0$ and $b\in A$,
then $|a.b|\leq |a|.|b|=0$. Remark also that $|.|:A\to R$
factorizes though $A/\Ker(|.|)$. Indeed, if $|a|=0$ and $b\in A$,
then $|b|=|b+a-a|\leq |b+a|+|-a|\leq |b+a|+|-1|.|a|=|b+a|$ and $|b+a|\leq |b|+|a|=|b|$, which
shows that $|b+a|=|b|$.
If $|.|$ is multiplicative (resp. square-multiplicative, resp. power-multiplicative) then its kernel
is a prime (resp. square-reduced, resp. reduced) ideal.

\begin{lemma}
\label{square-multiplicative-sign}
A seminorm $|.|$ always fulfills
$|-1|\geq 1$. If it is square-multiplicative, it
moreover fulfills $|-a|=|a|$ for all $a\in A$.
\end{lemma}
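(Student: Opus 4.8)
The plan is to treat the two assertions separately, in both cases reducing everything to elementary manipulations in the totally ordered aura $R$ that is the target of $|.|$, using only submultiplicativity for the first claim and square-multiplicativity for the second, together with the fact (the semifield property) that the non-zero elements of $R$ form a totally ordered group in which multiplication by a fixed element is strictly monotone.

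First I would prove $|-1|\geq 1$. Applying submultiplicativity to the identity $(-1)\cdot(-1)=1$ gives $1=|1|=|(-1)(-1)|\leq |-1|\cdot|-1|=|-1|^2$. Now argue by contradiction in the totally ordered $R$: if $|-1|<1$, then since $R$ is positive we have $0\leq |-1|<1$, and multiplying the inequality $|-1|<1$ by the non-negative element $|-1|$ yields $|-1|^2\leq |-1|<1$, contradicting $1\leq |-1|^2$. Hence $|-1|\geq 1$.

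For the second assertion, assume $|.|$ is square-multiplicative. Then for any $a\in A$ we compute $|-a|^2=|(-a)^2|=|a^2|=|a|^2$, so $|-a|$ and $|a|$ are two non-negative elements of $R$ with the same square. It remains to check that in a positive totally ordered aura, $x,y\geq 0$ and $x^2=y^2$ force $x=y$. If $|a|=0$ this is clear, because $|-a|^2=0$ in a semifield forces $|-a|=0$ (a non-zero element would be invertible). Otherwise both $|a|$ and $|-a|$ are non-zero, hence invertible, and setting $g=|-a|/|a|$ we have $g>0$ and $g^2=1$; if $g>1$ then multiplying by the positive element $g$ gives $g^2>g>1$, and symmetrically $g<1$ gives $g^2<1$, so in either case $g^2\neq 1$, a contradiction. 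Therefore $g=1$, i.e.\ $|-a|=|a|$. Taking $a=1$ recovers $|-1|=1$ in this case.

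The only point requiring any care is the ``square root'' step at the end, namely extracting $x=y$ from $x^2=y^2$ in a general totally ordered aura; I expect this to be the main (mild) obstacle, handled by splitting off the case $|a|=0$ and otherwise working in the totally ordered group of units of $R$, where strict monotonicity of multiplication is available. Everything else is a direct application of the halo axioms and of the total order on $R$.
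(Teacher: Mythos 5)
Your proof of the first assertion matches the paper's: from $(-1)^2 = 1$ and submultiplicativity get $1 \leq |-1|^2$, then use $|-1| < 1$ together with compatibility of the order with multiplication to get $|-1|^2 \leq |-1| < 1$, a contradiction.

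For the second assertion you take a genuinely different route from the paper. The paper first deduces $|-1| = 1$ (the case $|-1| > 1$ is killed by $1 = |(-1)^2| = |-1|^2 \geq |-1|$), and then uses submultiplicativity twice: $|-a| \leq |-1| \cdot |a| = |a|$ and $|a| = |-(-a)| \leq |-a|$, concluding by antisymmetry. You instead compute $|-a|^2 = |a^2| = |a|^2$ directly from square-multiplicativity, and then prove a ``square-root uniqueness'' lemma in the target: in a positive totally ordered aura, $x, y \geq 0$ and $x^2 = y^2$ force $x = y$, handled via the totally ordered group of units (and the degenerate case $x = 0$). Both arguments are correct. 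The paper's version is shorter and uses only the two seminorm axioms plus the order; yours isolates a clean algebraic fact about the aura $R$ (uniqueness of non-negative square roots) at the cost of carrying out the extra reduction to units, and it does not need submultiplicativity at all in that second step. One small point worth making explicit in your write-up: strict monotonicity of multiplication by a unit $g > 0$ in the aura is not one of the halo axioms (the axiom only gives $x \leq y \Rightarrow gx \leq gy$); you do get strictness by cancellation, since $gx = gy$ forces $x = y$, and this is exactly the reason the argument must pass to the group of units rather than working with arbitrary non-negative elements.
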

\begin{proof}
If $|-1|<1$ then
$1=|1|=|(-1)^2|\leq |-1|^2\leq 1.|-1|=|-1|$ which is
a contradiction.
Suppose now that $|.|$ is square-multiplicative.
We then have $|-1|=1$. Indeed, if $|-1|>1$ then
$1=|1|=|(-1)^2|=|-1|^2\geq |-1|.1=|-1|$
which is a contradiction.
We also have $|-a|=|a|$
for all $a\in A$. Indeed,
we have $|-a|\leq |-1|.|a|=|a|$ and $|a|=|-(-a)|\leq |-a|$
so that $|-a|=|a|$.
\end{proof}

\begin{example}
The map $|.|=|.|_{6,0}:\Z\to R_{\{1\}}:=\{0,1\}$
given by setting $|n|=0$ if $6|n$ and $1$ otherwise
is not multiplicative because $|2.3|=0<|2|.|3|=1$ but it
is power-multiplicative. Similarly, the $6$-adic seminorm
$|.|=|.|_{6}:\Z\to \R_{+}$ that sends a number $n$ to $6^{-\mathrm{ord}_{6}(n)}$
is power-multiplicative but not multiplicative because
$|2.3|_{6}=1/6<|2|_{6}.|3|_{6}=1$.
\end{example}

\begin{remark}
If we stick to $\R_{+}$-valued seminorms, one can show
that power-multiplicative seminorms correspond (through
the supremum construction) to compact subsets of the
space of bounded $\R_{+}$-valued multiplicative seminorms
on the corresponding completion, as explained by Berkovich
in \cite{Berkovich1}, chapter 1.
Moreover, the power-multiplicativity condition can be shown to be equivalent to
the square-multiplicativity condition $|a^2|=|a|^2$ for all element $a$ of the
algebra (by using the spectral radius). The advantage of the square-multiplicative
formulation is that it is defined by a first order logic condition that is easier to
deal with using model theoretic methods. Next Lemma shows that the notion
of pre-archimedean seminorm (which is also adapted to model theoretic
method) allows an easier determination of non-archimedean valuations.
\end{remark}

\begin{lemma}
\label{pre-non-archimedean}
Let $|.|:A\to R$ be a pre-archimedean seminorm, i.e.,
$$|a+b|\leq \max(|2|,1).\max(|a|,|b|)$$
for all $a,b\in A$. Then the following conditions are equivalent:
\begin{enumerate}
\item $|.|$ is non-archimedean,
\item $|n|\leq 1$ for all integer $n\in \N$,
\item $|2|\leq 1$.
\end{enumerate}
Moreover, if $R_{2}^*\subset R^*$ is the convex subgroup generated
by $|2|$, then the induced map $|.|':A\to R_{R^*/R_{2}}$ is a non-archimedean
seminorm.
\end{lemma}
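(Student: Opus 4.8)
The plan is to establish the cycle $(1)\Rightarrow(2)\Rightarrow(3)\Rightarrow(1)$ and then to construct $|.|'$ directly. For $(1)\Rightarrow(2)$, if $|.|$ is non-archimedean then an immediate induction on $n\in\N$ gives $|n|=|1+\cdots+1|\leq\max(|1|,\dots,|1|)=1$, the base cases being $|0|=0$ and $|1|=1$. The implication $(2)\Rightarrow(3)$ is the special case $n=2$. For $(3)\Rightarrow(1)$, if $|2|\leq 1$ then $\max(|2|,1)=1$ in $R$, so the pre-archimedean inequality collapses to $|a+b|\leq 1\cdot\max(|a|,|b|)=\max(|a|,|b|)$, which is exactly non-archimedeanity.

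For the last assertion, recall that $R$ being a semifield, $R^*=R\setminus\{0\}$ is a totally ordered abelian group; since $R_{2}^*$ is a convex subgroup, the Example constructing $\pi_H:R_\Gamma\to R_{\Gamma/H}$ for a convex subgroup shows that $R^*/R_{2}^*$ is again totally ordered and that the projection $\pi:R\to R_{R^*/R_{2}}$ sending $0$ to $0$ and $r\in R^*$ to its class is order-preserving and multiplicative; moreover $R_{R^*/R_{2}}$ is a tropical aura, hence positive and totally ordered by Lemma~\ref{tropical-optimistic}. The point to keep in mind is that $\pi$ is \emph{not} a halo morphism in general, since $R$ itself need not satisfy $r+s\leq\max(r,s)$ (take $R=\R_{+}$). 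I set $|.|':=\pi\circ|.|$ and check that this composite is a generalized seminorm; it is then automatically non-archimedean, since checking the subadditivity axiom in the tropical target $R_{R^*/R_{2}}$ is literally checking $|a+b|'\leq\max(|a|',|b|')$.

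That $|0|'=0$ and $|1|'=1$ is clear. Submultiplicativity is inherited: from $|ab|\leq|a|\,|b|$ in $R$ and the fact that $\pi$ is order-preserving and multiplicative one gets $|ab|'\leq|a|'\,|b|'$ (the cases with a zero value being trivial, using $\pi(0)=0$). The substantial point is subadditivity. Starting from the pre-archimedean inequality $|a+b|\leq\max(|2|,1)\cdot\max(|a|,|b|)$ in $R$, observe that $\max(|2|,1)\in R_{2}^*$: indeed $|2|\in R_{2}^*$ and $1\in R_{2}^*$, so whichever of them realises the maximum lies in $R_{2}^*$. Hence $\pi(\max(|2|,1))=1$, and applying the order-preserving multiplicative $\pi$ — which, having $\pi(0)=0$, sends $\max$ to $\max$ on the totally ordered set $R$ — yields $|a+b|'\leq\pi(\max(|2|,1))\cdot\pi(\max(|a|,|b|))=\max(|a|',|b|')$, the desired inequality.

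I expect the subadditivity step, together with the structural observation behind it, to be the main obstacle: one genuinely cannot obtain $|.|'$ as a composition of halo morphisms, and it is precisely the combination of the pre-archimedean bound on $|.|$ with the fact that the distortion factor $\max(|2|,1)$ lies in the very subgroup $R_{2}^*$ being quotiented out that makes the argument close. The remaining verifications — the zero-value bookkeeping, and the standard fact (already recorded in the quoted Example) that the quotient order on $R^*/R_{2}^*$ is well defined and respected by $\pi$ — are routine.
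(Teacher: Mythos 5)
Your argument is correct and follows the same route as the paper: the cycle $(1)\Rightarrow(2)\Rightarrow(3)\Rightarrow(1)$, and for the last assertion the observation that $\max(|2|,1)$ lies in $R_{2}^{*}$ and therefore maps to $1$ in $R_{R^{*}/R_{2}}$, which turns the pre-archimedean bound into the ultrametric one. The paper compresses this into a single sentence (``since $|2|=|1|$ in $R_{R^{*}/R_{2}}$''), whereas you rightly spell out that $\pi$ is order-preserving and multiplicative but not a halo morphism, and that $\max$ is preserved under $\pi$ on a totally ordered domain --- useful detail, but not a different approach.
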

\begin{proof}
The first condition implies the second because of the ultrametric inequality.
The second implies the third.
If $|2|\leq 1$ and $|.|$ is pre-archimedean then $|.|$ is
non-archimedean because $\max(|2|,1)=1$. This shows the equivalence.
Because of the pre-archimedean condition, and since $|2|=|1|$ in
$R_{R^*/R_{2}}$, we conclude that $|.|'$ is non-archimedean.
\end{proof}

We now define two notions of equivalence of halo morphisms.

\begin{definition}
Let $A$ be a halo, let $|.|_{1}:A\to R$ and $|.|_{2}:A\to S$ be two
halo morphisms from $A$ to two given halos.
We say the $|.|_{1}$ is bounded (resp. multiplicatively bounded)
by $|.|_{2}$ and we write $|.|_{1}\leq |.|_{2}$ (resp. $|.|_{1}\leq_{m}|.|_{2}$)
if for all $a,b\in A$ (resp. for all $a,b,c\in A$),
$$
\begin{array}{c}
|a|_{2}\leq |b|_{2}\Rightarrow |a|_{1}\leq |b|_{1}\\
\textrm{(resp. }|a|_{2}.|c|_{2}\leq |b|_{2}\Rightarrow |a|_{1}.|c|_{1}\leq |b|_{1}\textrm{)}.
\end{array}
$$
We say that $|.|_{1}$ is equivalent (resp. multiplicatively equivalent) to $|.|_{2}$ if
$$
\begin{array}{c}
|.|_{1}\leq |.|_{2}\textrm{ and }|.|_{2}\leq |.|_{1}\\
\textrm{(resp. }|.|_{1}\leq_{m} |.|_{2}\textrm{ and }|.|_{2}\leq_{m} |.|_{1}\textrm{)}.
\end{array}
$$
\end{definition}

Remark that multiplicative equivalence is stronger than equivalence
and allows one to transfer multiplicativity properties between equivalent
seminorms. In particular, if two seminorms $|.|_{1}:A\to R_{1}$
and $|.|_{2}:A\to R_{2}$ are multiplicatively equivalent, then
their multiplicative subsets in $A$ are equal. If we denote it $M$,
by Proposition \ref{prop-localization-multiplicative}, we thus get
factorizations $|.|_{1}:A_{M}\to R_{1}$ and $|.|_{2}:A_{M}\to R_{2}$
that remain multiplicatively equivalent.

The proof of the following theorem, that was our main motivation to introduce
the notion of tempered seminorm, is a refinements of Artin's proof of Ostrowski's
classification of absolute values on $\Z$.
\begin{theorem}
\label{tempered-pre-archimedean}
Let $|.|:A\to R$ be a tempered power-multiplicative seminorm on a ring $A$.
Then $|.|$ is pre-archimedean,
i.e. fulfills
$$|a+b|\leq \max(|2|,1).\max(|a|,|b|)$$
for all $a,b\in A$. If moreover $|2|>1$ then $|.|_{|\Z}$ is multiplicatively
equivalent to the archimedean seminorm
$|.|_{\infty}:\Z\to \Q_{+}$ given by $|n|_{\infty}=\max(n,-n)$.
\end{theorem}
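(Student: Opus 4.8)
The plan is to mimic Artin's classical argument (as in \cite{E-Artin1}) for the Ostrowski classification, using the tempered-growth hypothesis on $R$ to replace the analytic step where one takes $n$-th roots and passes to the limit in $\R_+$. First I would reduce the pre-archimedean inequality to a statement about the images $|n|$ of the integers: writing $a+b$ via the binomial-type expansion is not available, but one can instead expand $(a+b)^{2^k}$ using the binomial coefficients and power-multiplicativity $|x^n|=|x|^n$, getting
$$
|a+b|^{2^k}=|(a+b)^{2^k}|\leq \sum_{i=0}^{2^k}\left|\binom{2^k}{i}\right|\,|a|^i|b|^{2^k-i}
\leq (2^k+1)\max_i\left|\binom{2^k}{i}\right|\cdot\max(|a|,|b|)^{2^k}.
$$
So everything hinges on bounding $|\binom{2^k}{i}|$, hence on bounding $|m|$ for arbitrary integers $m$. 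The key lemma to prove first is therefore: for a tempered power-multiplicative seminorm, $|m|\leq \max(|2|,1)^{\lceil\log_2 m\rceil}$ (or a comparable bound); this comes from writing $m$ in base $2$, $m=\sum \epsilon_j 2^j$ with $\epsilon_j\in\{0,1\}$ and at most $\lfloor\log_2 m\rfloor+1$ nonzero terms, and applying subadditivity and $|2^j|=|2|^j$.

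Next I would run the \emph{tempered-growth} input. Suppose for contradiction that the pre-archimedean inequality fails, or more efficiently, just push the base-$2$ estimate into the expansion above: with $C:=\max(|2|,1)$ one gets $|\binom{2^k}{i}|\leq C^{k}$ roughly (since $\binom{2^k}{i}<2^{2^k}$ has about $2^k$ binary digits — here I'd need to be a little careful, the bound should be in terms of the number of binary digits of the coefficient, which is $O(2^k)$, not $O(k)$; the honest route is Artin's: bound $\binom{2^k}{i}\le 2^{2^k}$ so $|\binom{2^k}{i}|\le |2|^{2^k}$ if $|2|\geq 1$, and $\le$ a polynomial-in-$2^k$ factor coming from the number of summands if $|2|\le 1$). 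Taking $2^k$-th "roots" is not possible in $R$, so instead I set $x:=|a+b|/(\max(|2|,1)\max(|a|,|b|))$ after passing to the aura $R$ (where division is allowed), and the chain of inequalities yields $x^{2^k}\leq Q(2^k)$ for a fixed polynomial $Q\in\N[X]$ and all $k$. Since $\{2^k\}$ is cofinal among the values where I can test, I'd upgrade this to $x^n\leq \tilde Q(n)$ for all $n\in\N$ by monotonicity (if $x\le 1$ we are already done; if $x>1$, interpolate $n$ between consecutive powers of $2$, losing only a bounded factor which is absorbed into the polynomial). Tempered growth of $R$ then forces $x\leq 1$, which is exactly the pre-archimedean inequality.

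For the second assertion, assume $|2|>1$. I would first show $|n|_{|\Z}\le |2|^{\log_2 n+O(1)}$, so $|.|$ is bounded on $\Z$ by (a power of) $|.|_\infty$ in the multiplicative sense; the pre-archimedean inequality just proved gives this. For the reverse bound I'd use the standard trick: for $m,n\geq 2$ write $m$ in base $n$, $m=\sum c_j n^j$ with $0\le c_j<n$ and $j\le \log_n m$, so $|m|\leq (\log_n m+1)\cdot n\cdot \max_j |n|^{j}$ (using $|c_j|\le |n|$-ish via the already-established bound on integers, or more carefully $|c_j|\le$ a value controlled by $n$), hence $|m|\le (\log_n m+1)\, n\, \max(|n|,1)^{\log_n m}$. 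Applying this with $m$ replaced by $m^k$, taking "$k$-th roots" — again unavailable, so instead forming the ratio and invoking tempered growth once more — yields $|m|\leq \max(|n|,1)^{\log_n m}$, i.e. $|m|^{1/\log m}\le |n|^{1/\log n}$, and by symmetry this common value is a constant $>1$ (it is $>1$ because $|2|>1$); say it equals $|2|^{1/\log 2}$. This gives $|n|=|2|^{\log_2 n}$ for all $n\ge 1$, which is precisely the statement that $|.|_{|\Z}$ is multiplicatively equivalent to $n\mapsto \max(n,-n)$, using Lemma~\ref{square-multiplicative-sign} ($|-n|=|n|$, valid since power-multiplicative seminorms are square-multiplicative) to handle negatives and $|0|=0$ to handle zero.

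The main obstacle I anticipate is the replacement of the archimedean "take $n$-th roots and pass to the limit" step by a purely order-theoretic argument valid in an arbitrary tempered aura $R$: one must carefully arrange the inequalities so that the error terms (numbers of summands, binomial coefficients) are genuinely \emph{polynomial} in the exponent $n$, so that Definition of tempered growth applies, and one must handle the interpolation from the sparse exponents $2^k$ to all $n$ without leaving the class of polynomial bounds. A secondary nuisance is bounding $|m|$ for general integers $m$ inside $R$ — the naive base-$2$ estimate gives a factor exponential in the number of digits, which is fine only when $|2|\le 1$; when $|2|>1$ one must instead feed the pre-archimedean inequality (already proven) back in to get the clean bound $|m|\le\max(|2|,1)^{\lceil\log_2 m\rceil}$.
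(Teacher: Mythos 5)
Your argument for the pre-archimedean inequality is sound, and it takes a mildly different route than the paper's. The paper first proves, via base-$n$ expansion of $m^s$ and an application of tempered growth, the intermediate estimate $|m|\leq\max(1,|n|)^{c_{m,n}}$ for a suitable integer $c_{m,n}$, then splits into the cases $|2|\leq 1$ and $|2|>1$ (using the estimate to show $|.|$ is increasing on $\N$ in the second case) before bounding the binomial coefficients. You instead bound $|\binom{n}{i}|$ directly by expanding the integer in base $2$, getting $|\binom{n}{i}|\leq (n+1)\max(|2|,1)^n$, which feeds straight into the $(a+b)^n$ expansion and avoids the case split entirely. That works, and the polynomial-in-$n$ prefactor is harmless for tempered growth; you did not need the ``clean'' bound $|m|\leq\max(|2|,1)^{\lceil\log_2 m\rceil}$, so the circularity you worried about never arises. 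The restriction to exponents $2^k$ is an unnecessary detour---running the binomial expansion for all $n$ directly is simpler, and the interpolation step it forces is only correct because you can absorb the loss into $Q(2X)$, which you could have skipped.

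For the second assertion your sketch does not close. The Ostrowski-style conclusion $|n|=|2|^{\log_2 n}$, and the intermediate quantities $|m|^{1/\log m}$ and $\max(|n|,1)^{\log_n m}$, are statements in $\R_+$: in a general totally ordered aura $R$ there are no irrational (nor even fractional) powers, so these expressions are undefined, and in any case the goal is not an equality of values but the conditional implications defining multiplicative equivalence. The argument has to stay with integer exponents throughout, as the paper does: from $|2|>1$ one deduces $|n|>1$ for all $n\geq 2$, then (via $c_{n,m}=1$ for $m>n>1$) that $|.|$ is increasing on $\N$; then, choosing rational $p/q$ with $\log n/\log m< p/q<1$ so that $n^q\leq m^p$ in $\N$, monotonicity gives $|n|^q\leq|m|^p$, from which injectivity and finally $|mn|=|m|\,|n|$ follow by separate tempered-growth contradictions. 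Those three facts (increasing, injective, multiplicative on $\Z$) are precisely what is needed to verify the implication $|a|_\infty|c|_\infty\leq|b|_\infty \Leftrightarrow |a|\,|c|\leq|b|$. Your outline contains the right ingredients (base-$n$ expansion plus tempered growth) but the final step as written leaves the aura $R$, and it also never isolates the monotonicity and injectivity of $|.|$ on $\N$, which are what actually make the multiplicative equivalence go through. The appeal to Lemma~\ref{square-multiplicative-sign} to handle negatives is fine, since power-multiplicative implies square-multiplicative.
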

\begin{proof}
We can first suppose that $A=\Z$.
Let $n,m>1$ be two natural numbers. We may write
$m^s=a_{0}+a_{1}n+a_{r}n^{r(s)}$ where $a_{i}\in \{0,1,\dots,n-1\}$
and $n^{r(s)}\leq m^s$.
More precisely, $r(s)$ is the integral part $\left[s.\frac{\log m}{\log n}\right]$
of $s.\frac{\log m}{\log n}$,
so that there exists a constant $c_{m,n}\in \N$ such that
$r(s)\leq s.c_{m,n}$ for all $s\in \N$. In fact, we can choose
$c_{m,n}=1+\left[\frac{\log m}{\log n}\right]$.
Now remark that $|a_{i}|=|1+\cdots+1|\leq a_{i}.|1|\leq n$
for all $i$. We now use that $|.|$ is power-multiplicative, so that
$$|m|^s=|m^s|\leq \sum_{i=0}^{r(s)}|a_{i}|.|n|^i\leq \sum|a_{i}|\max(1,|n|)^{r(s)}\leq n(1+r(s))\max(1,|n|)^{r(s)}.$$
Since $r(s)\leq s.c_{m,n}$, we get finally
$$|m|^s\leq n(1+sc_{m,n})\max(1,|n|)^{sc_{m,n}}.$$
This gives
$$\left(\frac{|m|}{\max(1,|n|)^{c_{m,n}}}\right)^s\leq n(1+sc_{m,n})=P(s),$$
for $P(X)=n(1+Xc_{m,n})\in \N[X]$. Since $R$ has tempered growth,
this implies that
$\left(\frac{|m|}{\max(1,|n|)^{c_{m,n}}}\right)\leq 1$, i.e.,
$$|m|\leq \max(1,|n|)^{c_{m,n}}$$
for $c_{m,n}\in \N$.

First suppose that $|2|\leq 1$. Then, applying
the above inequality with $n=2$, we get $|m|\leq 1$ for all $m\in \Z$.
If $a,b\in A$, we have
$$
|a+b|^s=|(a+b)^s|\leq \sum_{i=0}^s|\binom{s}{i}|.\max(|a|,|b|)^s\leq
(s+1)\max(|a|,|b|)^s
$$
because $\binom{s}{i}$ is an integer so that $|\binom{s}{i}|\leq 1$.
Since $R$ has tempered growth, this inequality implies
$$|a+b|\leq \max(|a|,|b|).$$

Now suppose that $1<|2|$. Then applying the above
inequality with $m=2$, we get $1<|2|\leq \max(1,|n|)^{c_{m,n}}$ so
that $1<\max(1,|n|)$ and $|n|>1$ for all non-zero $n\in \Z$.

Now suppose that $m>n>1$. Then $\log(m)>\log(n)$ so that
$\frac{\log m}{\log n}>1$ and
$c_{m,n}=1+\left[\frac{\log m}{\log n}\right]\geq 2$. We also have
$c_{n,m}\leq 1$, which implies that $c_{n,m}=1$,
so that $|n|\leq |m|^{c_{n,m}}=|m|$. We thus have proved
that $|.|:\N\to R$ is an increasing map for the usual order on $\N$.

As before, if $a,b\in A$, we have
$$
|a+b|^s=|(a+b)^s|\leq \sum_{i=0}^s|\binom{s}{i}|.\max(|a|,|b|)^s.
$$
We recall for reader's convenience that,
since the binomial coefficient $\binom{s}{i}$ counts the number
of parts of cardinal $i$ in a set of cardinal $s$ and $2^s$ is the cardinal
of the set of parts of a set of cardinal $s$, we have
$\binom{s}{i}\leq 2^s$.
Since $|.|:\N\to R$ is increasing, we get $|\binom{s}{i}|\leq |2^s|$
and
$$|a+b|^s\leq (s+1)(|2|.\max(|a|,|b|))^s,$$
and since $R$ has tempered growth, this inequality implies
$$|a+b|\leq |2|.\max(|a|,|b|).$$
Together with what we showed at the beginning, this implies that
$|.|$ is pre-archimedean.

It remains to prove that it is injective.
Since $\frac{\log n}{\log m}<1$, there exists a rational number
$p/q$ such that $\frac{\log n}{\log m}<p/q<1$.
Now if we work in the usual real numbers, we have the inequality
$n=m^{\frac{\log n}{\log m}}\leq m^{p/q}$
so that $n^q\leq m^p$ in the ordered set $\N$ of integers $\N$. 
Since $|.|$ is increasing, we get
$|n|^q\leq |m|^p$ with $p<q$. Changing $p/q$ to $(up)/(uq)$
with $u>1$, we can suppose that $q-p>1$.
Now suppose that $|m|=|n|$. This implies
$|n|^q\leq |n|^p$ with $p<q$ and since
$|n|>1$, this gives
$|n|^{q-p}\leq |n|$ with $q-p>1$.
But since $|n|>1$, this gives a contradiction.

Now suppose that for $m>n>1$, we have $0<|mn|<|m|.|n|$.
This implies that for all $q\in\N-\{0\}$, we have
$$1<\left(\frac{|m|.|n|}{|mn|}\right)^q.$$

Remark that we have $mn=m^{\frac{\log n}{\log m}+1}$,
so that for all rational number $p/q$ such that
$\frac{\log n}{\log m}<p/q<1$, we have
$n\leq m^{p/q}$ so that $n^q\leq m^p$
and $|m|^q.|n|^q\leq |n|^{p+q}$. We also
have $p<q$ so that $(mn)^q\geq (mn)^p=m^pn^p\geq n^{p+q}$
and
$$1<\left(\frac{|m|.|n|}{|mn|}\right)^q\leq \frac{|n|^{p+q}}{|n^{p+q}|}\leq 1$$
(by power-multiplicativity) which is a contradiction. We thus get
that $|m|.|n|\leq |mn|$ and $|.|_{|\Z}$ is multiplicative and
multiplicatively equivalent to $|.|_{\infty}$.
\end{proof}

\subsection{Places}
\begin{definition}
A place of a ring is a multiplicative equivalence class of
generalized seminorm. If $x$ is a place
of a ring $A$, we denote $|.(x)|:A\to R$ a given representative
of $x$.
\end{definition}

If the representative $|.(x)|$ of a given place $x$ of $A$ is multiplicative,
then all other representatives, being multiplicatively equivalent to it,
will also be multiplicative. This shows that our notion of place of
a ring generalizes the classical notion.

The $p$-adic valuation $|.|_{p,trop}:\Z\to R_{p^\Z}$ and
the $p$-adic seminorm $|.|_{p}:\Z\to \R_{+}$ are equivalent
tempered multiplicative seminorms. They thus represent the
same place of $\Z$.

The use of non-multiplicative seminorms in analytic geometry
is imposed by the central role played by the notion of uniform convergence
on compacts in the theory of complex analytic functions. For example,
if $K\subset \C$ is a compact subset, we want the seminorm
$$|.|_{\infty,K}:\C[X]\to \R_{+}$$
given by $|P|_{\infty,K}:=\sup_{x\in K}|P(x)|_{\C}$
with $|.|_{\C}:\C\to \R_{+}$ the usual complex norm
to be a (square-multiplicative) place of $\C[X]$.

\section{Harmonious spectra}
We now want to define a notion of spectrum of a ring
that combines the valuation (or Zariski-Riemann) spectrum
with the seminorm spectrum. It will be called the harmonious
spectrum. We define various versions of this space that
are adapted to the various problem we want to solve.

\subsection{Definition}
\begin{definition}
Let $A$ be a ring. We define various spaces of
seminorms on $A$.
\begin{enumerate}
\item The multiplicative harmonious spectrum of $A$
is the set $\Speh^{m}(A)$ of multiplicative tempered places
on $A$.
\item The power-multiplicative harmonious spectrum of
$A$ is the set $\Speh^{pm}(A)$ of tempered power-multiplicative places
on $A$.
\item The pre-archimedean square-multiplicative harmonious
spectrum of $A$ is the set $\Speh^{pasm}(A)$ of pre-archimedean square-multiplicative
places of $A$.
\item The Krull valuation spectrum of $A$ is the set
$\Spev(A)=\Speh^v(A)$ of equivalence classes
of Krull valuations, i.e., multiplicative tropical seminorms $|.|:A\to R_{\Gamma}$.
\end{enumerate}
For $\bullet\in \{v,m,pm,pasm\}$, the topology on $\Speh^\bullet(A)$ is generated by
subsets of the form
$$
U\left(\frac{a}{b}\right)=
\{x\in \Speh^\bullet(A)|\,
|a(x)|<|b(x)|,\,d|b\Rightarrow d\textrm{ is multiplicative for }|.(x)|\}.
$$
\end{definition}

The spaces $\Speh^{pasm}(A)$ and $\Spev(A)$ can be studied by
model theoretic methods as the ones used in \cite{Prestel} and \cite{Huber-Knebusch}
because they are defined in the setting of first order logic.

\begin{remark}
There are also good reasons to use the topology generated by subsets of
the form
$$
U\left(\frac{a}{b}\right)=
\{x\in \Speh^\bullet(A)|\,
|a(x)|\leq|b(x)|\neq 0,\,d|b\Rightarrow d\textrm{ is multiplicative for }|.(x)|\}.
$$
These are better compatible with Huber's version of analytic spaces
and don't seem to be so uncompatible with archimedean analytic
geometry as was explained to us by Huber (private mail).
\end{remark}

\begin{remark}
The topology on $\Speh^m(A)$ is generated by subsets
of the form $U\left(\frac{a}{b}\right)=\{x\in \Speh^\bullet(A)|\,|a(x)|<|b(x)|\}$
because the multiplicativity condition on $b$ is automatic.
We can also see that in any case, the topology on $\Speh^\bullet(A)$
has rational domains of the form
$$
R\left(\frac{a_{1},\dots,a_{n}}{b}\right)=\{x\in \Speh^\bullet(A)|\,
|a_{i}(x)|<|b(x)|,\,d|b\Rightarrow d\textrm{ is multiplicative for }|.(x)|\}
$$
as a basis. Indeed, the intersection of two rational domains
$R\left(\frac{f_{1},\dots,f_{n}}{h}\right)$ and $R\left(\frac{g_{1},\dots,g_{m}}{k}\right)$
is the rational domain
$R\left(\frac{f_{1}k,\dots,f_{n}k,g_1h,\dots,g_mh}{hk}\right)$.
\end{remark}

Let $\Mc(A)$ denote the set of multiplicative $\R_{+}$-valued seminorms,
equipped with the coarsest topology that makes the maps $x\mapsto |a(x)|$
for $a\in A$ continuous.

We recall the following for reader's convenience (see \cite{Poineau1}
for details on spectrally convex subsets).
\begin{definition}
Let $V$ be a compact subset of $\Mc(A)$. Denote $\|.\|_{\infty,V}$
the supremum of all seminorms in $V$ and $\Bc(V)$ the completion
of the localization of $A$ by the set of elements of $A$ that are non-zero
in a neighborhood of $V$.
We call $V$ spectrally convex if the natural map
$$\Mc(\Bc(V))\to \Mc(A)$$
has image $V$.
\end{definition}

\begin{proposition}
\label{sm-real-valued-spectrum}
There is a natural bijection between the set of $\R_{+}$-valued
square-multiplicative seminorms on $A$ and the set of spectrally convex
compact subsets of $\Mc(A)$.
\end{proposition}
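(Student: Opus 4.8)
The plan is to construct the bijection in both directions and check they are mutually inverse. Given a spectrally convex compact subset $V\subset\Mc(A)$, I would associate to it the supremum seminorm $\|.\|_{\infty,V}\colon A\to\R_{+}$, $a\mapsto\sup_{x\in V}|a(x)|$. This is finite because $V$ is compact and each $a\mapsto|a(x)|$ is continuous on $\Mc(A)$; it is clearly a seminorm (submultiplicativity and subadditivity pass to suprema), and it is square-multiplicative because each member of $V$ is multiplicative, hence $\sup_{x\in V}|a^2(x)|=\sup_{x\in V}|a(x)|^2=(\sup_{x\in V}|a(x)|)^2$, using that squaring is an order isomorphism of $\R_{+}$. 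Conversely, given an $\R_{+}$-valued square-multiplicative seminorm $\nu$ on $A$, I would attach to it the set
$$V_{\nu}=\{x\in\Mc(A)\mid |a(x)|\leq\nu(a)\text{ for all }a\in A\},$$
the set of bounded (by $\nu$) multiplicative seminorms. One must check $V_{\nu}$ is a nonempty compact spectrally convex subset: compactness follows from the fact that it is a closed subset of the product $\prod_{a\in A}[0,\nu(a)]$ (Tychonoff), and nonemptiness is where Berkovich's spectrum theory enters — one passes to the separated completion $\widehat{A}_{\nu}$ of $A$ with respect to (the seminorm attached to) $\nu$ and uses that $\Mc(\widehat{A}_{\nu})$ is nonempty; spectral convexity is then essentially the defining property recalled just before the statement, since $\Bc(V_{\nu})$ can be identified with the uniform completion $\widehat{A}_{\nu}$.

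The two constructions must be shown to be mutually inverse. For $V\mapsto\|.\|_{\infty,V}\mapsto V_{\|.\|_{\infty,V}}$: one inclusion $V\subseteq V_{\|.\|_{\infty,V}}$ is trivial since every $x\in V$ satisfies $|a(x)|\leq\sup_{y\in V}|a(y)|$; the reverse inclusion is exactly the statement that $V$ is spectrally convex, because $\Mc(\Bc(V))$ is naturally identified with the multiplicative seminorms bounded by $\|.\|_{\infty,V}$, and spectral convexity says the image of $\Mc(\Bc(V))\to\Mc(A)$ is $V$. For $\nu\mapsto V_{\nu}\mapsto\|.\|_{\infty,V_{\nu}}$: the inequality $\|.\|_{\infty,V_{\nu}}\leq\nu$ is immediate from the definition of $V_{\nu}$; for the reverse, one needs that for each $a\in A$ there is a bounded multiplicative seminorm $x\in V_{\nu}$ with $|a(x)|$ as close to $\nu(a)$ as desired — this is the Berkovich spectral radius formula, which for a \emph{square}-multiplicative (equivalently, by the remark following Lemma \ref{pre-non-archimedean}, power-multiplicative) seminorm gives $\nu(a)=\sup_{x\in V_{\nu}}|a(x)|$ exactly, with no spectral-radius correction needed.

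The main obstacle, and the place where I would spend most of the work, is the identification $\Bc(V_{\nu})\cong\widehat{A}_{\nu}$ together with the surjectivity of $\Mc(\Bc(V))\to\Mc(A)$ onto $V$ — i.e. genuinely nailing down spectral convexity rather than taking it as a black box. This requires the theory of spectrally convex compacts from Poineau \cite{Poineau1}: one shows that the elements of $A$ invertible in a neighborhood of $V$ are precisely those $a$ with $\inf_{x\in V}|a(x)|>0$, that localizing and completing at these yields a Banach ring whose Berkovich spectrum retracts onto $V$, and that square-multiplicativity of $\|.\|_{\infty,V}$ (hence uniformity of the completion) forces the retraction to be a homeomorphism onto $V$. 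The seminorm-theoretic parts — finiteness, (square-)multiplicativity, the two easy inequalities — are routine; it is the compact-convexity dictionary that carries the real content, and I would cite \cite{Poineau1} and Berkovich \cite{Berkovich1}, chapter 1, for the ingredients while assembling them here.
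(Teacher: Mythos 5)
Your proposal follows essentially the same route the paper takes, just fully unpacked: the paper's own proof is little more than a citation to Berkovich's \cite{Berkovich1}, Section 1.2 for the power-multiplicative/compact-subset correspondence, a one-line description of the map $\nu\mapsto V_\nu=\{x\,:\,|a(x)|\leq\nu(a)\ \forall a\}$, and the remark that square-multiplicative implies power-multiplicative via the spectral radius. Your write-up is correct and simply spells out the two directions, the verification that they are mutually inverse (the forward composite using spectral convexity, the backward one using the spectral radius formula for power-multiplicative seminorms), and correctly identifies where the real work is hiding — the identification $\Bc(V_\nu)\cong\widehat{A}_\nu$ and the surjectivity of $\Mc(\Bc(V))\to V$, which both the paper and you outsource to Berkovich and Poineau. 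No gap, same argument; yours is just the expanded version.
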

\begin{proof}
The fact that power-multiplicative $\R_{+}$-valued seminorms correspond
to compact subsets of the topological space of bounded multiplicative
seminorms on a Banach algebra is explained in \cite{Berkovich1}, Section 1.2.
To a given real-valued power-multiplicative seminorm $|.|:A\to \R_{+}$,
one can associate the compact subset of $\Mc(A)$ given by all multiplicative
seminorms $|.|':A\to \R_{+}$ bounded by it, i.e., such that
$|.|'\leq |.|$.
By using the spectral radius, one shows that every square-multiplicative
$\R_{+}$-valued seminorm is automatically power-multiplicative.
\end{proof}

By applying Theorem \ref{tempered-pre-archimedean} and Proposition
\ref{sm-real-valued-spectrum}, we get a map from the set
of compact subsets of $\Mc(A)$ to $\Speh^{pasm}(A)$ that gives
a bijection between spectrally convex subsets of $\Mc(A)$ and points
in $\Speh^{pasm}(A)$ given by $\R_{+}$-valued seminorms.

\begin{lemma}
\label{valuative-harmonious}
The Krull valuation spectrum $\Spev(A)$
embeds in $\Speh^{m}(A)$, and
can be defined in this space as the subspace
$$
\Spev(A)=\{x\in \Speh^m(A)|\,|2(x)|\leq 1\}=
\{x\in \Speh^m(A)|\,\forall n\in \N,\,|n(x)|\leq 1\}.
$$
\end{lemma}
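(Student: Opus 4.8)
\emph{Proof proposal.} The plan is to produce the embedding tautologically and then identify the image by means of Theorem \ref{tempered-pre-archimedean}. A representative of a point of $\Spev(A)$ is a multiplicative seminorm $v\colon A\to R_{\Gamma}$ with $R_{\Gamma}$ a tropical aura; since a tropical halo has tempered growth, such a $v$ is a tempered multiplicative seminorm, hence also represents a point of $\Speh^{m}(A)$. This gives a map $\Spev(A)\to\Speh^{m}(A)$, which is just the inclusion of the set of places admitting a tropical multiplicative representative, so it is injective. It is moreover a topological embedding: in both spaces the topology is generated by the sets $U(a/b)$, and the side condition ``$d\mid b\Rightarrow d$ multiplicative for $|.(x)|$'' is vacuous on $\Speh^{m}(A)$ (every element is multiplicative for a multiplicative representative), so $U(a/b)\cap\Spev(A)$ computed in $\Speh^{m}(A)$ agrees with $U(a/b)$ computed in $\Spev(A)$. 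Finally, if $x\in\Spev(A)$ and $v$ is a tropical representative, then $v(n)=v(1)+_{trop}\cdots+_{trop}v(1)=\max(1,\dots,1)=1$ for $n\geq1$, so $|n(x)|\leq1$ for all $n$; thus $\Spev(A)$ is contained in both displayed subsets.

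Next I would show the two subsets of $\Speh^{m}(A)$ in the statement coincide. Fix $x\in\Speh^{m}(A)$ and a tempered multiplicative representative $|.|$; being multiplicative it is power-multiplicative (by induction on $|a^{n}|=|a|^{n}$), so by Theorem \ref{tempered-pre-archimedean} it is pre-archimedean. Lemma \ref{pre-non-archimedean} then gives the equivalence of ``$|2|\leq1$'', ``$|n|\leq1$ for all $n\in\N$'' and ``$|.|$ is non-archimedean''. Each of these conditions is independent of the chosen representative (taking $c=1$ in the definition of $\leq_{m}$ shows ``$|n|\leq1$'' and ``$|2|\leq1$'' are preserved under multiplicative equivalence, and a similar manipulation handles non-archimedeanity), so the two subsets are equal, and equal to the set of non-archimedean places in $\Speh^{m}(A)$.

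It remains to prove $\{x\in\Speh^{m}(A)\mid|2(x)|\leq1\}\subseteq\Spev(A)$, which is the substantive step. Let $x$ lie in this set with tempered multiplicative representative $|.|\colon A\to R$; by the previous paragraph $|.|$ is non-archimedean. Its kernel $P$ is prime, and $|.|$ factors as a multiplicative halo morphism through $k:=\Frac(A/P)$ (Corollary to Proposition \ref{prop-localization-multiplicative}), on which $|a|\cdot|a^{-1}|=|1|=1$ for $a\neq0$; hence $\Gamma:=|k^{\times}|$ is a subgroup of $R^{\times}$, totally ordered for the order induced from $R$, and contains the image of $A\setminus P$. Define $v\colon A\to R_{\Gamma}=\{0\}\cup\Gamma$ by $v(a)=|a|$; this is well defined since $v(a)=0$ for $a\in P$ and $v(a)=|a|\in\Gamma$ otherwise. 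It is a multiplicative halo morphism to the tropical aura $R_{\Gamma}$ (the tropical halo of $\Gamma$; if $\Gamma$ is trivial this is $R_{\{1\}}$ and $v$ is the trivial valuation, still a Krull valuation): multiplication on $\{0\}\cup\Gamma$ is that of $R$, $v(1)=1$, $v(0)=0$, and for the tropical addition $v(a+b)\leq\max(v(a),v(b))$ holds because $|a+b|\leq\max(|a|,|b|)$ in $R$ and the inclusion $R_{\Gamma}\hookrightarrow R$ both preserves and reflects $\leq$. Thus $v$ represents a point of $\Spev(A)$; and since $v$ and $|.|$ have the same underlying set map $A\to\{0\}\cup\Gamma\subseteq R$, and multiplicative equivalence refers only to the relations $|a|\,|c|\leq|b|$, which are identical whether read in $R_{\Gamma}$ or in $R$, the valuation $v$ is multiplicatively equivalent to $|.|$, i.e.\ it represents $x$. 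Hence $x\in\Spev(A)$.

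The only real work is the construction in the last paragraph: passing from the non-archimedean multiplicative seminorm to the tropical aura $R_{\Gamma}$, and checking that this passage neither adds nor loses order relations so that multiplicativity, subadditivity and multiplicative equivalence all transport; the degenerate case $\Gamma=\{1\}$ must also be admitted. Everything upstream of that---injectivity, the match of topologies, and the coincidence of the two candidate subsets---is bookkeeping once Theorem \ref{tempered-pre-archimedean} and Lemma \ref{pre-non-archimedean} are invoked.
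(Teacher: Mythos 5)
Your proof is correct and follows the same route as the paper's: invoke Theorem~\ref{tempered-pre-archimedean} to get pre-archimedeanity, then Lemma~\ref{pre-non-archimedean} to identify the locus $|2(x)|\leq 1$ with the non-archimedean locus. The paper's own proof stops there and simply asserts that this locus equals $\Spev(A)$; you usefully fill in the remaining substance---that a Krull valuation, landing in a tropical aura, is automatically tempered (so the embedding exists), and conversely that a non-archimedean tempered multiplicative seminorm is multiplicatively equivalent to the induced valuation into $R_{\Gamma}$ for $\Gamma=|k^{\times}|$---as well as the check that the two subspace topologies coincide, which the paper leaves implicit.
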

\begin{proof}
First remark that we know by Theorem \ref{tempered-pre-archimedean} that
all seminorms in $\Speh^{m}(A)$ are pre-archimedean, and by Lemma
\ref{pre-non-archimedean}, the hypothesis implies that they are non-archimedean,
i.e., fulfill that for all $a,b\in A$,
$$|a+b|\leq \max(|a|,|b|)$$
if and only if $|2|\leq 1$, and this is also equivalent to
$|n|\leq 1$ for all $n\in \N$.
The subset described above is exactly $\Spev(A)$.
\end{proof}

We have the following natural diagram of continuous maps.
$$
\xymatrix{
			&\Spev(A)\ar[d]\ar[r]& \Speh^{pm}(A)\ar[d]&\\
\Mc(A)\ar[r] 	& \Speh^m(A) \ar[r] 	& \Speh^{pasm}(A)}
$$

\begin{remark}
The spaces $\Speh^m(A)$ and $\Speh^{pm}(A)$ are defined
by quantifying on integers (because of the temperation and power-multiplicativity
hypothesis).
The spaces $\Speh^{pasm}(A)$ and $\Spev(A)$ have the advantage of being
defined in the setting of first order logic. This makes them
quite well adapted to model theoretic methods. Unfortunately,
we are not able to use them directly to define analytic spaces.
\end{remark}

\subsection{Comparison with Huber's retraction procedure}
\label{huber}
Roland Huber explained to us the following results, which are of great interest for
our study. He also kindly authorized us to include his original ideas in our article,
ideas which are completely from his own mind, and are localized in this subsection.

If $x=|.(x)|:A\to R$ is a multiplicative seminorm on a ring $A$, we denote
$\Gamma_{x}$ the totally ordered group $|K(x)|-\{0\}$ of non-zero
elements in the image of the extension of $x$ to the fraction field
$K(x)$ of $A/\Ker(|.(x)|)$ where $\Ker(|.(x)|)$ is the prime ideal of element of
$A$ whose image by $|.(x)|$ is $0$.

Huber considers the space $\Speh^{pam}(A)$ of pre-archimedean multiplicative
seminorms (called separated quasi-valuations by him) on a ring $A$
equipped with the topology generated by the sets
$\{x\in \Speh^{pam}(A)|\,|a(x)|\leq |b(x)|\neq 0\}$, $a,b\in A$. This space
is defined in the setting of first order logic and can thus be studied with
model theoretic tools. It is spectral and the boolean algebra of constructible
subsets is generated by the above subsets.

The subsets of seminorms $x$ such that $|2(x)|>1$ (resp. $|2(x)|\leq 1$)
is called the archimedean
(resp. non-archimedean) subset of $\Speh^{pam}(A)$.
It is a closed (resp. open) constructible
subset of $\Speh^{pam}(A)$ denoted by $\Speh^{pam}_{a}(A)$ (resp.
$\Speh^{pam}_{na}(A)$), and there is an equality of topological spaces
$$\Speh^{pam}_{na}(A)=\Speh^v(A)=\Spev(A).$$

For the purpose of composing
seminorms, Huber defines a subset $\Speh^{pam}_{sep}(A)$ of $\Speh^{pam}(A)$
composed of separated seminorms. These are seminorms
$|.(x)|:A\to R$ such that for every $\gamma\in \Gamma_{x}$ with $\gamma>1$,
there is some $n\in\N$ with $\gamma^n>|2|$ (in particular, every non-archimedean
seminorm in there is quasi-separated).

Huber remarked that the natural map
$$\Speh^m(A)\to \Speh^{pam}_{sep}(A)$$
from tempered multiplicative seminorms (in the sense of our article) to
pre-archimedean multiplicative and separated seminorms is a bijection.
Moreover, we have an equality of topological spaces
$$\Speh^{pam}_{sep,na}(A)=\Speh^{pam}_{na}(A)=\Spev(A)$$
as said above.

For every $x\in \Speh^{pam}(A)$,  put
$$
\Delta_{x}=\{\gamma\in \Gamma_{x}|\,(\max(1,|2(x)|)^{-1}\leq \gamma^n\leq \max(1,|2(x)|),
\textrm{ for every }n\in \N\}.
$$
Then $\Delta_{x}$ is a convex subgroup of $\Gamma_{x}$. If $x$ is
non-archimedean then $\Delta_{x}=\{1\}$. If $x$ is archimedean,
then $\Delta_{x}$ is the greatest convex subgroup of $\Gamma_{x}$
that does not contain $|2(x)|$. We have a retraction from $\Speh^{pam}(A)$
onto its subset $\Speh^{pam}_{sep}(A)$,
$$r:\Speh^{pam}(A) \to \Speh^{pam}_{sep}(A)$$
given by $x\mapsto x/\Delta_{x}$.
The quotient topology $\tau_{sep}$ of the given topology on $\Speh^{pam}(A)$,
called the retraction topology on $\Speh^{pam}_{sep}(A)$,
gives a very nice combination of the closed inequalities topology
on the non-archimedean part with the open inequalities topology
on the archimedean part.

To be more precise, let us describe the open subset of $\tau_{sep}$.
For every $q\in \Q$, define a function $\lambda_{q}$ on $\Speh^{pam}(A)$
by setting $\lambda_{q}(x)=(\max(1,|2(x)|))^q\in \Gamma_{x}\otimes\Q$. For
all $a,b\in A$, the set
$D(a,b)_{q}:=\{x\in \Speh^{pam}(A)|\,|a(x)|\leq \lambda_{q}(x).|b(x)|\neq 0\}$
is open and constructible in $\Speh^{pam}(A)$. Put
\begin{eqnarray*}
U(a,b)_{q} &  := & \cup_{t\in \Q,t<q}D(a,b)_{t}\\
U_{sep}(a,b)_{q}&:=&U(a,b)_{q}\cap \Speh^{pam}_{sep}(A).
\end{eqnarray*}
Then $U(a,b)_{q}$ is open in $\Speh^{pam}(A)$ and
$U(a,b)_{q}=r^{-1}(U_{sep}(a,b)_{q})$. Hence
$U_{sep}(a,b)_{q}$ is open in $\tau_{sep}$. The non-archimedean
part of $U_{sep}(a,b)_{q}$ is equal to $\{x\in \Spev(A)||a(x)|\leq |b(x)|\neq 0\}$, i.e.,
a standard non-archimedean open subset. The archimedean part
of $U_{sep}(a,b)_{q}$ is equal to
$\{x\in \Speh^{pam}_{sep,a}(A)||a(x)|<\lambda_{q}(x).|b(x)|\}$.

Huber further shows that the topology on the archimedean compotent
$\Speh^{pam}_{sep,a}(A)$ is generated by the sets $U_{sep}(a,b)_{0}$
and that
$\Speh^{pam}_{sep,na}(A)$ is a dense open subset of $\Speh^{pam}_{sep}(A)$.
This implies that the topology on the non-archimedean (resp. archimedean) part of
$\Speh^{pam}_{sep}(A)$ is generated by subsets of the form
$$
\begin{array}{c}
\{x|\,|a(x)|\leq |b(x)|\neq 0\}\\
\textrm{(resp. }\{x|\,|a(x)|<|b(x)|\}\textrm{),}
\end{array}
$$
for $a,b\in A$, so that the retraction topology combines nicely the open inequalities
topology of archimedean analytic geometry with the closed inequalities topology
of non-archimedean analytic geometry.

This shows that the space $\Speh^{pam}(A)$ and its relations to global analytic
geometry deserve to be further studied.

\subsection{The multiplicative harmonious spectrum of $\Z$}
We will now show that the harmonious spectrum of $\Z$
is very close to the previously known spectrum of $\Z$.
\begin{lemma}
\label{finite-field-trivial}
Let $K$ be a finite field and $|.|:K\to R$ be a
multiplicative seminorm. Then $|.|$ is multiplicatively
equivalent to the trivial seminorm $|.|_{0}:K\to R_{\{1\}}=\{0,1\}$.
\end{lemma}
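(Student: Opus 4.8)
The plan is to show that a multiplicative seminorm on a finite field is forced to take only the two values $0$ and $1$, with $|a|=0$ exactly when $a=0$, and then to check by a short case analysis that this makes it multiplicatively equivalent to $|.|_{0}$.

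First I would note that since $K$ is a field, every nonzero $a$ is invertible, and multiplicativity gives $|a|\cdot|a^{-1}|=|aa^{-1}|=|1|=1$, so that $|a|\in R^{\times}$; in particular $|a|\neq 0$ whenever $a\neq 0$. Hence $|.|$ restricts to a group homomorphism $K^{\times}\to R^{\times}$. Now $K^{\times}$ is finite of order $N=\card(K)-1$, so $|a|^{N}=|a^{N}|=|1|=1$ for every $a\in K^{\times}$. On the other hand $R$ is a positive totally ordered aura, and compatibility of the order with multiplication forces $R^{\times}$ to be torsion-free: if $r>1$ then $r^{N}>1$, and if $0<r<1$ then $r^{N}<1$ (cf. Remark \ref{smaller-than}), so $r^{N}=1$ implies $r=1$. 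Therefore $|a|=1$ for all $a\in K^{\times}$, while $|0|=0$; thus $|.|$ and $|.|_{0}$ have exactly the same zero locus and both take the value $1$ everywhere else.

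It then remains to verify the two inequalities $|.|\leq_{m}|.|_{0}$ and $|.|_{0}\leq_{m}|.|$ from the definition of multiplicative equivalence. For any $a,b,c\in K$, since all of $|a|,|b|,|c|$ lie in $\{0,1\}$ and $0<1$ in $R$, the inequality $|a|\,|c|\leq|b|$ in $R$ holds if and only if it is \emph{not} the case that $a\neq 0$, $c\neq 0$ and $b=0$; and $|a|_{0}\,|c|_{0}\leq|b|_{0}$ in $\{0,1\}$ holds under exactly the same condition on $a,b,c$. Hence the two preorders on triples coincide, both implications in the definition of $\leq_{m}$ are immediate, and $|.|$ is multiplicatively equivalent to $|.|_{0}$.

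I do not expect a real obstacle here. The one point that needs a little care is the torsion-freeness of $R^{\times}$, which is precisely where the positivity and totality of the order on the target aura enter; the only bookkeeping is the auxiliary element $c$ in the definition of $\leq_{m}$, and that is handled uniformly by identifying when $|a|\,|c|\leq|b|$ can fail.
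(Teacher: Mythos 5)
Your proof is correct and takes essentially the same route as the paper: both use that $K^\times$ is finite, so $|a|^{N}=1$, and then that the total positive order on $R$ makes $R^{\times}$ torsion-free to conclude $|a|=1$ for all $a\neq 0$. Your version just packages the order argument as torsion-freeness and adds the (routine, but in the paper implicit) final check that having the same zero locus and unit values gives multiplicative equivalence.
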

\begin{proof}
If $n$ is the order of $K^\times$, then $x^n=x$ for all $x\in K^\times$.
We can suppose $n>1$.
Let $x\in K^\times$ and suppose that $|x|\neq 1$.
We always have $|x|^n=1$.
If $|x|<1$, then $|x|^{n-1}\leq 1$ so that $|x|^n\leq |x|$. But $|x|^n=1$,
which implies that $1\leq|x|<1$. This is a contradiction. If we suppose
$|x|>1$, we also arrive to $1\geq |x|>1$. This shows that
$|x|=1$.
\end{proof}

\begin{lemma}
\label{tropical-rational}
Let $|.|:\Q\to R$ be a non-archimedean multiplicative seminorm on $\Q$ with
trivial kernel. Then $|.|$ is either equivalent to $|.|_{p}:\Q\to K_{p^\Z}$ for
a prime number $p$ or to $|.|_{0}:\Q\to K_{\{1\}}=\{0,1\}$.
\end{lemma}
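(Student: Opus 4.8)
This is the non-archimedean half of Ostrowski's theorem recast for generalized seminorms, so the plan is to imitate Artin's argument (\cite{E-Artin1}) and read off from $|.|$ the prime of $\Z$ sitting below it. First I would record the basic facts: multiplicativity implies square-multiplicativity, so $|-1|=1$ by Lemma~\ref{square-multiplicative-sign}, and the ultrametric inequality gives $|m|\le 1$ for every $m\in\Z$ (indeed $|n|=|1+\cdots+1|\le\max(|1|,\dots,|1|)=1$ for $n\in\N$, and $|-m|=|-1|\,|m|=|m|$). I would then set
$$\pfk:=\{m\in\Z\,:\,|m|<1\}$$
and check that it is a proper prime ideal of $\Z$: stability under sum is the ultrametric inequality, absorption is submultiplicativity ($|mn|\le|m|\,|n|\le|m|$), primality uses the equality $|mn|=|m|\,|n|$ together with $|m|,|n|\le 1$, and $1\notin\pfk$ since $|1|=1$.

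Since $\Z$ is a principal ideal domain, either $\pfk=(0)$ or $\pfk=p\Z$ for a unique prime $p$, which are exactly the two alternatives in the statement. If $\pfk=(0)$, every nonzero integer satisfies $|m|=1$, hence, by multiplicativity and the trivial-kernel hypothesis (so $|b|\ne 0$), $|q|=|a|/|b|=1$ for every nonzero $q=a/b\in\Q$; thus the image of $|.|$ is $\{0,1\}\subset R$ with $|q|=0\iff q=0$, whence $|a||c|\le|b|\iff|a|_{0}|c|_{0}\le|b|_{0}$ for all $a,b,c\in\Q$, i.e. $|.|$ is multiplicatively equivalent to $|.|_{0}$. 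If $\pfk=p\Z$, set $\gamma:=|p|$; then $\gamma\ne 0$ (trivial kernel) and $\gamma<1$ (since $p\in\pfk$), so $0<\gamma<1$ as $R$ is a positive totally ordered aura, and therefore $\gamma$ has infinite multiplicative order and $\{\gamma^{n}\}_{n\in\Z}$ is an infinite cyclic subgroup of $R^{\times}$ on which the induced order is opposite to the natural order on the exponents. Consequently $p^{n}\mapsto\gamma^{-n}$, together with $0\mapsto 0$, defines an injective order-preserving multiplicative map $\psi\colon K_{p^{\Z}}\hookrightarrow R$, which, the targets being totally ordered, also reflects the order. For nonzero $q\in\Q$, writing $q=p^{v}u$ with $v=\mathrm{ord}_{p}(q)\in\Z$ and $u$ a quotient of integers prime to $p$, we get $|u|=1$ (numerator and denominator both lie outside $\pfk$) and, by multiplicativity, $|q|=\gamma^{v}=\psi(p^{-v})=\psi(|q|_{p})$; thus $\psi\circ|.|_{p}=|.|$, and the order-embedding property of $\psi$ gives $|a|_{p}|c|_{p}\le|b|_{p}\iff|a||c|\le|b|$ for all $a,b,c\in\Q$, i.e. $|.|$ is multiplicatively equivalent to $|.|_{p}$.

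I do not expect a serious obstacle here: the proof is essentially bookkeeping organized around the prime $\pfk$. The one point that needs care is the claim in the $p$-adic case that $\gamma=|p|$ has infinite multiplicative order, so that the value monoid is genuinely $K_{p^{\Z}}$ and not a finite cyclic quotient; this is immediate since $\gamma^{k}=1$ with $0<\gamma<1$ is incompatible with the compatibility of the order with multiplication in $R$. It is also worth noting that $|.|$ is given on $\Q$ from the outset, so no extension step is needed, and that full multiplicativity (not merely power-multiplicativity) is used precisely in the primality of $\pfk$ and in the formula $|q|=\gamma^{v}$.
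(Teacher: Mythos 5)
Your proof is correct and follows essentially the same approach as the paper's: both extract the prime ideal $\pfk=\{m\in\Z:|m|<1\}$ from the ultrametric inequality, split into the cases $\pfk=(0)$ and $\pfk=p\Z$, and in the latter case use multiplicativity and the fact that integers prime to $p$ have $|\cdot|$-value $1$ to reduce $|\cdot|$ to the $p$-adic valuation. Your write-up is a little more explicit about why $\pfk$ is prime and about the order-embedding $\psi:K_{p^{\Z}}\hookrightarrow R$ that realizes the multiplicative equivalence, but these are refinements, not a different route.
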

\begin{proof}
We have $|n|=|1+\cdots+1|\leq 1$. If $|p|=1$ for all primes, then
$|n|=1$ for all $n$ because of unique factorization. This implies
that $|.|$ is equivalent to $|.|_{0}$. Suppose now that
there exists a prime $p$ such that $|p|<1$.
The set $\pfk=\{a\in \Z|\,|a|<1\}$ is an ideal of $\Z$ such
that $p\Z\subset \pfk\neq \Z$. Since $p\Z$ is a maximal
ideal, we have $\pfk=p\Z$. If now $a\in \Z$ and $a=bp^m$
with $b$ not divisible by $p$, so that $b\notin\pfk$,
then $|b|=1$ and hence $|a|=|p|^m$. Remark now that
$|p|<1$ implies $|p|^{n+1}<|p|^{n}$ for all $n$ so that
the map $|.|:\Q\to R$ factorizes through the tropical
field $K_{|p|^\Z}=1\cup\{|p|^\Z\}\subset R$. We have
thus proved that $|.|$ is equivalent to $|.|_{p}$.
\end{proof}

\begin{proposition}
\label{harmonious-spectrum-of-Z}
There is a natural identification
$$\Speh^{m}(\Z)=\{|.|_{p},|.|_{p,0},\,p\textrm{ prime}\}\cup\{|.|_{\infty}\}$$
of the multiplicative tempered spectrum of $\Z$ with the set
consisting of $p$-adic and $p$-residual seminorms for all
prime ideals $(p)$ (including $(p)=(0)$), and of the archimedean norm.
The natural map
$\Mc(\Z)\to \Speh^{m}(\Z)$ is surjective and can be
described by figure \ref{figure-berko-Z}.
\begin{figure}[h]
\centering
\begin{minipage}{\linewidth}
\includegraphics[width=0.4\textwidth]{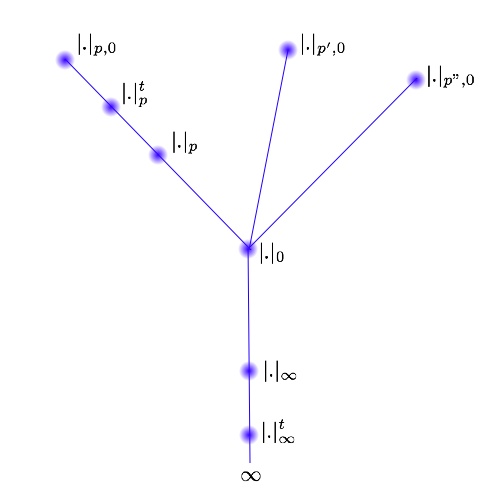}
\end{minipage}%
\begin{minipage}{0.2\linewidth}
\hspace{-9cm}
$\longrightarrow$
\end{minipage}%
\begin{minipage}{\linewidth}
\hspace{-11cm}
\includegraphics[width=0.4\textwidth]{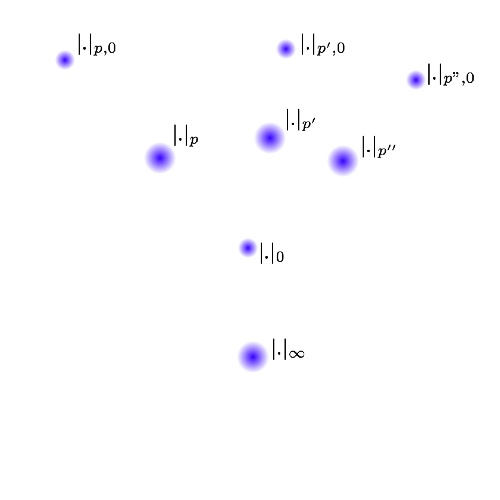}
\end{minipage}
\caption{From Berkovich to Harmonious spectrum.}
\label{figure-berko-Z}
\end{figure}
\end{proposition}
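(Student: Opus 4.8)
The plan is to exploit the trichotomy packaged by the three classification results already proved: Theorem \ref{tempered-pre-archimedean} disposes of the archimedean possibility at once, and the non-archimedean possibility is then split according to the kernel and reduced either to Lemma \ref{tropical-rational} (over $\Q$) or to Lemma \ref{finite-field-trivial} (over a residue field).

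So let $x\in\Speh^m(\Z)$ and fix a representative $|.|:\Z\to R$; it is multiplicative and tempered, hence power-multiplicative, so Theorem \ref{tempered-pre-archimedean} applies: $|.|$ is pre-archimedean, and if $|2|>1$ it is multiplicatively equivalent to $|.|_\infty$, so $x=|.|_\infty$. Assume now $|2|\leq 1$; by Lemma \ref{pre-non-archimedean} the (pre-archimedean) seminorm $|.|$ is then non-archimedean, and $\pfk:=\Ker(|.|)$ is a prime ideal of $\Z$, so $\pfk=(p)$ for a prime $p$ or $\pfk=(0)$. If $\pfk=(p)$, then $|.|$ factors through the reduction $\Z\to\F_p$ (discussion preceding Lemma \ref{square-multiplicative-sign}), the induced multiplicative seminorm $\F_p\to R$ is multiplicatively equivalent to the trivial one by Lemma \ref{finite-field-trivial}, and composing with the surjection $\Z\to\F_p$ shows $|.|$ is multiplicatively equivalent to the $p$-residual seminorm $|.|_{p,0}$, i.e. $x=|.|_{p,0}$. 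If $\pfk=(0)$, then $S=\Z\setminus\{0\}\subseteq M_{|.|}$ (multiplicativity gives condition (2) of Definition \ref{definition-multiplicative}, and condition (1) holds since $R$ is a semifield and the kernel is trivial), so by the Corollary to Proposition \ref{prop-localization-multiplicative} the morphism $|.|$ extends to a multiplicative halo morphism $|.|:\Q\to R$, still non-archimedean with trivial kernel; Lemma \ref{tropical-rational} then makes this extension equivalent either to $|.|_p:\Q\to K_{p^\Z}$ for some prime $p$, whose restriction to $\Z$ is the $p$-adic place ($x=|.|_p$), or to $|.|_0:\Q\to K_{\{1\}}$, whose restriction is the trivial place ($x=|.|_0$, the case $(p)=(0)$ of the statement). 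Hence $\Speh^m(\Z)$ is contained in the listed set.

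It remains to check three routine points. First, each listed seminorm really is a point of $\Speh^m(\Z)$: $|.|_\infty:\Z\to\Q_+$ is multiplicative and tempered since $\Q_+\subseteq\R_+$ and $\R_+$ has tempered growth (Corollary after Lemma \ref{archimedean-tempered}), while $|.|_p$, $|.|_{p,0}$ and $|.|_0$ are multiplicative with values in the tropical auras $K_{p^\Z}$ and $R_{\{1\}}$, which are tempered because every tropical halo is. Second, these places are pairwise distinct: mutually bounded seminorms (in particular multiplicatively equivalent ones) have equal kernels and agree on all relations ``$|a|=|b|$'', which separates the residual seminorms $|.|_{p,0}$ from one another and from the kernel-$(0)$ family, and within that family the witness $|p|=|1|$ (true for every listed seminorm except $|.|_p$) separates $|.|_p$ from $|.|_q$ $(q\neq p)$ and from $|.|_0$, while $|2|=|1|$ (true except for $|.|_\infty$) separates $|.|_\infty$ from $|.|_0$. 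Third, $\Mc(\Z)\to\Speh^m(\Z)$ is surjective, since each of these places has an $\R_+$-valued multiplicative representative: $|.|_\infty$ itself, the real $p$-adic seminorm with $|p|_p=1/p$, and the $\{0,1\}$-valued residual seminorms $|.|_{p,0}$ and $|.|_0$. The description of this map in Figure \ref{figure-berko-Z} then records that the interior of the archimedean branch of $\Mc(\Z)$ is collapsed onto $|.|_\infty$, the interior of each $p$-adic branch onto $|.|_p$ with its two ends going to $|.|_0$ and $|.|_{p,0}$, and the central trivial seminorm onto $|.|_0$.

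I expect the only real obstacle to be bookkeeping: keeping the distinction between ``equivalent'' and ``multiplicatively equivalent'' seminorms straight, and verifying that this relation is preserved under precomposition with $\Z\to\F_p$ and under the localization $\Z\hookrightarrow\Q$ --- the latter being precisely where one uses the multiplicative (not merely submultiplicative) hypothesis together with the Corollary to Proposition \ref{prop-localization-multiplicative}. Granting those transfers, the proposition is a direct assembly of Theorem \ref{tempered-pre-archimedean} and Lemmas \ref{pre-non-archimedean}, \ref{finite-field-trivial} and \ref{tropical-rational}.
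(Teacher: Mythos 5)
Your proof is correct and follows essentially the same route as the paper's: split on the kernel and on whether $|2|\leq 1$ or $|2|>1$, then apply Theorem \ref{tempered-pre-archimedean}, Lemma \ref{pre-non-archimedean} (the paper cites Lemma \ref{valuative-harmonious}, which is just a repackaging of the same two results), Lemma \ref{finite-field-trivial} on the residue field, and Lemma \ref{tropical-rational} after extending to $\Q$. You are in fact slightly more careful than the paper on three points it glosses over --- extending to $\Q$ via the localization Corollary, checking that the listed places are pairwise distinct, and checking surjectivity of $\Mc(\Z)\to\Speh^m(\Z)$ --- but this is a matter of thoroughness, not a different argument.
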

\begin{proof}
Let $|.|=|.(x)|:\Z\to R$ be a tempered multiplicative seminorm
that represents a multiplicative place $x\in \Speh^{m}(\Z)$.
The kernel $\pfk$ of $|.|$ is a prime ideal of $\Z$.
If $\pfk\neq 0$, it is of the form $\pfk=(p)$
for a prime number $p$ and $|.|$ factorizes through the finite
field $\Z/p\Z$. The multiplicative seminorm $|.|:\Z/p\Z\to R$
is equivalent to the trivial seminorm $|.|_{p,0}:\Z/p\Z\to R_{\{1\}}=\{0,1\}$
by Lemma \ref{finite-field-trivial}. Now suppose that
$\pfk=0$ and $|.|$ is a generalized norm.
If $|2|\leq 1$, then by Lemma \ref{valuative-harmonious}, $|.|$ is
non-archimedean. Lemma \ref{tropical-rational} shows
that $|.|$ is equivalent to $|.|_{p}:\Q\to K_{p^\Z}$ or $|.|_{0}:\Q\to K_{\{1\}}$.
If $|2|>1$, then Theorem \ref{tempered-pre-archimedean}
shows that $|.|$ is multiplicatively equivalent to the
usual archimedean seminorm $|.|_{\infty}:\Z\to \Q_{+}$.
All this shows that points of $\Speh^{m}(A)$ are exactly given by
$$\Speh^{m}(A)=\{|.|_{p},|.|_{p,0},|.|_{\infty}\}.$$
The multiplicative seminorm spectrum of $\Z$ is
$$
\Mc(\Z)=
\{|.|_{p}^t,|.|_{p,0},\,p\textrm{ prime},\,t\in [0,\infty[\}\cup\{|.|_{\infty}^t,\,t\in [0,1]\}.
$$
as shown in Emil Artin's book (following Ostrowski) \cite{E-Artin1}.
\end{proof}

\subsection{The multiplicative harmonious affine line over $\Z$}
We will now give a quite complete description of the points
of the multiplicative harmonious affine line over $\Z$.
\begin{definition}
The harmonious affine line over $\Z$, denoted $\A^{1,h}_{\Z}$ is
$\Speh^m(\Z[X])$.
\end{definition}

Recall from Lemma \ref{valuative-harmonious} that
the valuation spectrum $\Spev(\Z[X])\subset \Speh^m(\Z[X])$
is exactly the set of multiplicative seminorms such that $|2|\leq 1$.

\begin{lemma}
\label{Fp-points}
Let $x\in \Speh^m(\Z[X])$ be a seminorm.
If the image of $x$ in $\Speh^m(\Z)$ has a non-zero
kernel $(p)$, then $|.(x)|$ is non-archimedean and
there exists an irreducible polynomial $P\in \F_{p}[X]$
such that
\begin{enumerate}
\item either $|.(x)|$ is equivalent to the trivial seminorm
$|.|_{0,P,p}:\F_{p}[X]/(P)\to R_{\{1\}}=\{0,1\}$,
\item or $|.(x)|$ is equivalent to the $P$-adic seminorm
$|.|_{P}:\F_{p}[X] \to R_{P^\Z}$ given by
$|Q|_{P}=P^{-\mathrm{ord}_{P}(Q)}$.
\end{enumerate}
\end{lemma}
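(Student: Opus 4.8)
The plan is to push $|.(x)|$ through the surjection $\Z[X]\to\F_p[X]$ and then, over the one-dimensional ring $\F_p[X]$, to appeal to the two classifications already at our disposal: Lemma~\ref{finite-field-trivial} for seminorms on a finite field, and the computation underlying Lemma~\ref{tropical-rational} for seminorms with trivial kernel. First I would unwind the hypothesis: the kernel of the place represented by $|.(x)|$ restricted to $\Z$ is $(p)$, so $|p(x)|=0$; since $\Ker|.(x)|$ is an ideal of $\Z[X]$ it contains $(p)$, and therefore $|.(x)|$ factors as $|.(x)|=|.|'\circ\pi$ with $\pi\colon\Z[X]\to\Z[X]/(p)=\F_p[X]$ and $|.|'\colon\F_p[X]\to R$ a tempered multiplicative seminorm. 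Restricting $|.|'$ to the finite field $\F_p$, Lemma~\ref{finite-field-trivial} gives $|a|'=1$ for every $a\in\F_p^\times$; hence $|n(x)|\leq 1$ for all $n\in\N$, and by Lemma~\ref{valuative-harmonious} (which rests on Theorem~\ref{tempered-pre-archimedean} and Lemma~\ref{pre-non-archimedean}) the seminorm $|.(x)|$ is non-archimedean. This already establishes the first assertion of the lemma.

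It remains to pin down the equivalence class of $|.|'$, and here I would split on its kernel $\pfk$, which is a prime ideal of the principal ideal domain $\F_p[X]$: either $\pfk=(P)$ for an irreducible $P\in\F_p[X]$, or $\pfk=(0)$. If $\pfk=(P)$, then $|.|'$ factors through the finite field $\F_p[X]/(P)$, and Lemma~\ref{finite-field-trivial} shows it is there multiplicatively equivalent to the trivial seminorm $|.|_{0,P,p}\colon\F_p[X]/(P)\to R_{\{1\}}$, which is conclusion~(1). This branch is immediate once the reduction to $\F_p[X]$ is in place.

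The substantive case is $\pfk=(0)$. Then every non-zero $f\in\F_p[X]$ satisfies $|f|'\in R^\times$ (as $R$ is an aura), so by the corollary to Proposition~\ref{prop-localization-multiplicative} the seminorm $|.|'$ extends uniquely to $\F_p(X)$, and it is trivial on $\F_p$ by the previous step. I would then transcribe the proof of Lemma~\ref{tropical-rational}, with $\F_p[X]$ in the role of $\Z$ and irreducible polynomials in the role of primes. Supposing first that $|X|'\leq 1$, ultrametricity together with triviality on $\F_p$ gives $|f|'\leq 1$ for every polynomial $f$; the set $\{f:|f|'<1\}$ is then a prime ideal of $\F_p[X]$, and if it equals $(P)$ for an irreducible $P$, writing $f=gP^{m}$ with $P\nmid g$ forces $|g|'=1$, so $|f|'={|P|'}^{m}$; since $|P|'<1$ implies ${|P|'}^{m+1}<{|P|'}^{m}$, the morphism $|.|'$ factors through the tropical aura $R_{P^\Z}=\{0\}\cup{|P|'}^{\Z}$ and is equivalent to the $P$-adic seminorm $|.|_P$ — conclusion~(2).

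The main obstacle is to complete this last case cleanly, i.e. to verify that nothing outside (1) and (2) can occur. The transcription above also produces two residual configurations: $\{f:|f|'<1\}=(0)$, which is the trivial seminorm on $\F_p(X)$, and the case $|X|'>1$, which (running the same argument on $\F_p[1/X]\subset\F_p(X)$) exhibits $|.|'$ as the $(1/X)$-adic, or ``at infinity'', place of $\F_p(X)$; neither is multiplicatively equivalent to any $|.|_P$ with $P\in\F_p[X]$ irreducible. So the delicate step is to either argue these away or read conclusions~(1)--(2) broadly enough to include the trivial and infinite places of $\F_p(X)$; by contrast the finite-kernel case is a one-line appeal to Lemma~\ref{finite-field-trivial}.
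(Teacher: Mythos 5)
Your argument follows the paper's proof essentially verbatim: reduce modulo $p$ to a tempered multiplicative seminorm $|.|'$ on $\F_p[X]$, appeal to Lemma~\ref{finite-field-trivial} when the kernel of $|.|'$ is a non-zero prime $(P)$, and transcribe Lemma~\ref{tropical-rational} when it is $(0)$. (Your route to non-archimedeanness, via triviality of $|.|'$ on $\F_p^\times$ and then Lemma~\ref{valuative-harmonious}, is a small but genuine tightening of the paper's terser appeal to Lemma~\ref{pre-non-archimedean}.) The residual cases you flag at the end are not a defect of your proposal: they expose a real gap in the lemma as stated, which the paper's own proof also glosses over. Indeed, the trivial seminorm $|.|'\colon\F_p[X]\to R_{\{1\}}$ (value $1$ on every non-zero polynomial) and the place at infinity $|f|'=|X|'^{\deg f}$ with $|X|'>1$ are both tempered multiplicative seminorms on $\Z[X]$ whose restriction to $\Z$ has kernel $(p)$, yet neither is multiplicatively equivalent to any $|.|_{0,P,p}$ (the kernel in $\Z[X]$ is $(p)$, not $(p,P)$) nor to any $|.|_P$ (for $|.|_P$ the relation $|1|\cdot|1|\leq|P|$ fails, while it holds trivially for the other two). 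Compare with Lemma~\ref{tropical-rational} over $\Q$, whose statement explicitly allows the trivial seminorm as a third alternative; Lemma~\ref{Fp-points} needs the analogous third alternative, and, since $\F_p[X]$ unlike $\Z$ has a place at infinity, a fourth. Incidentally, note that the paper's proof contains the typo ``If the factorization $\ldots$ has non-trivial kernel'' where ``trivial kernel'' is meant, and its claim that $\pfk=\{P\mid|P(x)|<1\}$ is a prime ideal generated by an irreducible element already presupposes both $|X|'\leq 1$ and $\pfk\neq(0)$ --- precisely the two assumptions your residual cases violate.
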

\begin{proof}
Since $|p(x)|<1$ and $|.(x)|$ is pre-archimedean, we get
by Lemma \ref{pre-non-archimedean} that it is non-archimedean. 
We already know that $p.\Z[X]$ is included in the kernel of $|.(x)|$
so that we have a factorization $|.(x)|:\F_{p}[X]\to R$. If
this factorization has a non-trivial kernel, then this
kernel is a prime ideal of $\F_{p}[X]$ generated by
an irreducible polynomial $P$, and by Lemma \ref{finite-field-trivial},
$|.(x)|$ is equivalent to $|.|_{0,P,p}:\F_{p}[X]/(P)\to R_{\{1\}}=\{0,1\}$.
If the factorization $|.(x)|:\F_{p}[X]\to R$ has non-trivial kernel,
then the subset $\pfk=\{P\in \F_{p}[X]|\,|P(x)|<1\}$ is a prime ideal
generated by an irreducible polynomial $P$. We prove similarly
as in Lemma \ref{tropical-rational} that $|.|$ is equivalent to the
multiplicative seminorm $|.|_{P}:\F_{p}[X] \to R_{P^\Z}$ given by
$|Q|_{P}=P^{-\mathrm{ord}_{P}(Q)}$.
\end{proof}

We recall for reader's convenience Huber and Knebusch's
description of the (non-archimedean) valuation spectrum of
a polynomial algebra in \cite{Huber-Knebusch} on an algebraically closed
field $K$, Proposition 3.3.2,
in terms of ultrafilters of discs in $K$. Remark
that $\Spev(\bar{\Q}[T])$ is the quotient of $\Spev(\bar{\Q}[T])$
by the Galois action.

Let $K$ be an algebraically closed field.

\begin{definition}
Let $|.|:K\to R$ be a non-trivial non-archimedean valuation.
A disc of $K$ for $|.|$ is a subset $S\subset K$
of the form
$$
S=B^+(a,\gamma)=\{x\in K|\, |x-a|\leq |\gamma|\}\textrm{ or }
S=B^-(a,\gamma)=\{x\in K|\, |x-a|<|\gamma|\}
$$
for $a,\gamma\in K$.
To a disc of $K$ naturally corresponds a unique subset $\tilde{S}$ of
$\Spev(K[T])$ given by
$$
\tilde{S}=\{x\in \Spev(A)|\, |(X-a)(x)|\leq |\gamma(x)|\}\textrm{ or }
\tilde{S}=\{x\in K|\, |(X-a)(x)|<|\gamma(x)|\}.
$$
The set of discs is denoted $C$.
\end{definition}

\begin{proposition}[Huber-Knebusch]
\label{Huber-Knebusch}
Let $x\in \Spev(K[X])$ be a valuation whose restriction $|.|$ to $\Q$
is non-trivial. There exists a unique filter of discs $F$ of $K$
such that $x\in \tilde{S}$ for all $S\in F$. Distinguishing four cases,
we can give a precise description of $x$.
\begin{enumerate}
\item If there exists $a\in K^*$ such that $F=\{S\in C|\,a\in S\}$,
then $x$ is given by
$$P\mapsto |P(a)|.$$
\item If there exists $a\in K^*$, $\gamma\in K^*$ with $|\gamma|>0$
and $F=\{S\in C|\,B^+(a,\gamma)\subset S\}$, then
$$\left|\sum_{i=0}^n a_{i}(X-a)^i\right|(x)=\max(|a_{i}|.|\gamma|^i|i=0,\dots,n),$$
i.e., $|.(x)|$ is a generalized Gauss valuation.
\item If $\cap_{S\in F}S=\emptyset$ then $x$ is an immediate extension
of $|.|$ to $K(X)$ and can be constructed as follows. Let
$p(X)/q(X)\in K(X)^*$ be given. Choose $S\in F$ which
is disjoint to the zero set of $p(X).q(X)$. Then there exists $\gamma\in K^*$
with $|p(x)/q(x)|=|\gamma|$ for all $x\in S$, and we have
$$|p(X)/q(X)|(x)=|\gamma|.$$
\item Assume that $x$ is not of the previous types. Choose $a\in \cap_{S\in F}S$
and denote $M=\{|\gamma|\in |K^*||\,B^-(a,\gamma)\in F\}$. Then
$M\subset |K^*|$ is a major subset (i.e. if $x\in M$, $y\in |K^*|$ with $x\leq y$
then $y\in M$)
$$\left|\sum_{i=0}^n a_{i}(X-a)^i\right|(x)=\max(|a_{i}|q^i)$$
where the value group is $|K^*|\times q^\Z$ with the ordering
extending the one of $|K^*|$ and such that
$M=\{|\gamma|\in |K^*|,\,q=|X-a|<|\gamma|\}$. More precisely, we have,
depending on $M$, the three following possibilities:
\begin{enumerate}
\item If $M=\emptyset$ then $|\gamma|<q=|X-a|$ for all $\gamma\in K^*$ and
if $a_{n}\neq 0$ then
$$
\left|\sum_{i=0}^n a_{i}(X-a)^i\right|(x)=
|a_{n}X^n|(x)=
|a_{n}|q^n.
$$
\item If $M=|K^*|$ then $|X-a|=q<|\gamma|$ for all $\gamma\in K^*$
and if $a_{i_{0}}\neq 0$ then
$$\left|\sum_{i=i_{0}}^n a_{i}(X-a)^i\right|(x)=|a_{i_{0}}|q^{i_{0}}.$$
\item If $M=\{|\gamma|\in |K^*|,\,|b|<|\gamma|\}$ then
$$\left|\sum_{i=i_{0}}^n a_{i}\left(\frac{X-a}{b}\right)^i\right|(x)=\max(|a_{i}|q^i).$$
\end{enumerate}
\end{enumerate}
\end{proposition}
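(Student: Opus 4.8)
Since $K$ is algebraically closed, this is the classical classification of the valuations of $K(X)$ extending a non-trivial valuation of $K$ (Ostrowski, Kaplansky, MacLane), transported to the valuation spectrum; the statement in this form is \cite{Huber-Knebusch}, Proposition~3.3.2, so I will only sketch how one arrives at it.

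The plan is to attach to $x$ the set $F=\{S\in C\mid x\in\tilde S\}$ and to recover $x$, together with the formulas for $|P(x)|$, from it. That $F$ is a filter of discs is immediate: upward closedness in $C$ is clear, and two discs of $K$ meeting in a point are comparable for inclusion, so a finite intersection of members of $F$ is again a disc and again in $F$; moreover $F$ is the unique maximal filter of discs every member of which contains $x$. The four cases of the statement correspond to the four possible behaviours of $F$ ``at the limit'', read off from whether $F$ has a least element and from $I:=\bigcap_{S\in F}S$. If $F$ has least element a single point $\{a\}$, then $|(X-a)(x)|\le|\gamma|$ for every $\gamma\in K^*$; since $|.|$ is non-trivial this forces $X-a\in\Ker|.(x)|$, so $|.(x)|$ factors through $K[X]/(X-a)\cong K$ and is the evaluation $P\mapsto|P(a)|$ (case~1). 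If $F$ has least element a closed disc $B^+(a,\gamma)$ with $|\gamma|>0$ in $|K^*|$, then one reads off from $B^+(a,\gamma)\in F$ and $B^-(a,\gamma)\notin F$ that $|(X-a)(x)|=|\gamma|$ and, likewise, that $|(X-r)(x)|=\max(|\gamma|,|a-r|)$ for all $r$; factoring a polynomial $P=a_n\prod_k(X-r_k)$ (using that $K$ is algebraically closed) then gives $|P(x)|=|a_n|\prod_k\max(|\gamma|,|a-r_k|)=\max_i|a_i||\gamma|^i$, the generalized Gauss valuation (case~2). If $I=\emptyset$, then $F$ is a pseudo-convergent filter with no limit: for $p/q\in K(X)^\times$ one picks $S\in F$ disjoint from the finite zero set of $pq$, notes that $|p/q|$ is constant on $\tilde S$, and sets $|p/q|(x)$ equal to that value, well-definedness following from the fact that any two members of $F$ have a common refinement in $F$; this is the immediate extension of case~3. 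Finally, if $I=\{a\}$ but $\{a\}\notin F$, then $q:=|(X-a)(x)|$ generates a non-trivial extension $|K^*|\times q^{\Z}$ of the value group, the major subset $M=\{|\gamma|\in|K^*|\mid B^-(a,\gamma)\in F\}$ records where $q$ sits relative to $|K^*|$ (above all of it if $M=\emptyset$, a positive infinitesimal if $M=|K^*|$, just above $|b|$ if $M=\{|\gamma|:|b|<|\gamma|\}$), and since the monomials $|a_i|q^i$ then lie in distinct cosets of $|K^*|$ in the value group, the ultrametric inequality for $\sum a_i(X-a)^i$ is automatically an equality, which yields the formulas of case~4 (after dividing by $b$ in the third subcase).

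The part that is not bookkeeping --- the main obstacle --- is the rigidity/exhaustivity statement: that $F$ really does fall into one of these four shapes, equivalently that every $x\in\Spev(K[X])$ with non-trivial restriction to $K$ becomes, after a substitution $X\mapsto(X-a)/b$, either a ``monomial'' valuation or an immediate extension produced by a pseudo-convergent family. One proves this by analysing the function $a\mapsto|(X-a)(x)|$ on $K$: either the associated nested family of discs $\{\,y\mid |(X-a)(y)|\le\rho\,\}$ has non-empty intersection, and one extracts a point $a$ and then distinguishes cases~1, 2, 4 according to whether $|(X-a)(x)|$ is zero, a nonzero element of $|K^*|$, or lies strictly below the cut in $|K^*|$ that it determines; or it has empty intersection, which forces $I=\emptyset$ and case~3. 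Carrying this through rigorously, and matching the dichotomy with the combinatorics of the filter $F$, is precisely the analysis performed in \cite{Huber-Knebusch}, resting on the Ostrowski--Kaplansky theory of pseudo-convergent sequences.
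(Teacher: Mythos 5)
The paper offers no proof of this proposition: it is explicitly presented as a recollection of \cite{Huber-Knebusch}, Proposition~3.3.2, ``for reader's convenience,'' so there is no paper argument to compare yours against. Your sketch is a reasonable account of the standard Ostrowski--Kaplansky--MacLane analysis behind that result, and you correctly defer the hard part (that the filter $F$ really has one of the four listed shapes) to the cited reference.

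One deduction in your sketch is not quite right and worth flagging because it is exactly where cases~1 and~4(b) are separated. In case~1 you write that $|(X-a)(x)|\le|\gamma|$ for every $\gamma\in K^*$, and that non-triviality of $|.|$ then forces $X-a\in\Ker|.(x)|$. That implication fails in general: the bound by every $|\gamma|\in|K^*|$ is exactly compatible with $|(X-a)(x)|$ being a \emph{new} infinitesimal $q$ in an extended value group, which is case~4(b) (with $M=|K^*|$), where the support of $x$ is still $(0)$. What actually distinguishes case~1 is that $F=\{S\in C\mid a\in S\}$ contains the degenerate disc $\{a\}=B^+(a,0)$ (allowed since the definition permits $\gamma=0$), and $x\in\widetilde{\{a\}}$ gives $|(X-a)(x)|\le|0|=0$, hence $X-a\in\Ker|.(x)|$. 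In case~4(b), $F$ contains all $B^{\pm}(a,\gamma)$ with $\gamma\in K^*$ but \emph{not} $\{a\}$, so the two cases have different filters and different supports. If you repair the case~1 reasoning along these lines, the sketch is sound at the level of detail it aims for.
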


With help of our temperation condition, a similar classification
result also holds for archimedean tempered power-multiplicative seminorms.

\begin{definition}
Let $|.|:\Q[X]\to R$ be a seminorm. We say that $|.|$ is upper (resp. lower) bounded
if for all $P\in \Q[X]-\{0\}$, there exists $\lambda_{P}\in \Q^*$ (resp. $\mu_{P}\in \Q^*$)
such that $|P|\leq |\lambda_{P}|$ (resp. $|\mu_{P}|\leq |P|$).
\end{definition}

\begin{theorem}
\label{archimedean-harmonious-affine-line}
Let $x\in \Speh^{m}(\bar{\Q}[X])$ be a tempered
multiplicative seminorm such that
$|2|>1$. Distinguishing various cases, we can give the following
description of $x$.
\begin{enumerate}
\item If $|.(x)|$ has non-trivial kernel then there exists $a\in \bar{\Q}$
such that $|.(x)|$ is multiplicatively equivalent to
$$|.(a)|_{\C}:\bar{\Q}[X]\to \R_{+},$$
where $|.|_{\C}$
is the usual complex norm composed with an embedding $\bar{\Q}\to \C$.
\item Otherwise, if $|.(x)|$ is upper and lower bounded then
it is multiplicatively equivalent to an $\R_{+}$-valued seminorm and it extends
to $\C[X]$. More precisely,
there exists a point $a$ of $\C$ (not equal to a point
of $\bar{\Q}$) such that $|.(x)|$ is equivalent to $|.(a)|_{\C}$.
\item If $|.(x)|$ is upper but not lower bounded then
there exists $a\in \bar{\Q}$ such that
$|.(x)|$ is multiplicatively equivalent to $|.|_{2,a}$, where
$$|\sum_{i=i_{0}}^n a_{i}(X-a)^i|_{2,a}=\max(|a_{i}|_{\C}.q^i)=|a_{i_{0}}|_{\C}q^{i_{0}}$$
for $a_{i_{0}}\neq 0$,
where the value halo is $\R_{+}\lextimes R_{q^\Z}$
and $q<r$ for all $r\in \R_{+}$.
\item If $|.(x)|$ is lower but not upper bounded then
$|.(x)|$ is multiplicatively equivalent to $|.|_{2,\infty}$, where
$$|\sum_{i=0}^n a_{i}(X-a)^i|_{2,\infty}=\max(|a_{i}|_{\C}q^i)=|a_{n}|_{\C}q^n$$
for $a_{n}\neq 0$,
where the value halo is $\R_{+}\lextimes R_{q^\Z}$
and $q>r$ for all $r\in \R_{+}$.
\end{enumerate}
\end{theorem}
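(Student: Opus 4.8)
Write $\sim_m$ for multiplicative equivalence. Since $|.(x)|$ is multiplicative (hence power-multiplicative) and tempered with $|2|>1$, Theorem \ref{tempered-pre-archimedean} already tells us it is pre-archimedean and that its restriction to $\Z$ is $\sim_m|.|_{\infty}$. The plan begins with a sub-lemma: \emph{any tempered multiplicative norm $\nu$ on the field $\bar{\Q}$ with $\nu(2)>1$ satisfies $\nu\sim_m |.|_{\C}\circ\iota$ for some embedding $\iota:\bar{\Q}\hookrightarrow\C$}. One restricts $\nu$ to each number field $F\subset\bar{\Q}$: its value group is multiplicatively equivalent to $|F^{\times}|_{\infty}$, hence an archimedean totally ordered group, hence embeds in $\R_{+}$ by H\"older's theorem, so $\nu|_F\sim_m$ a classical archimedean absolute value of $F$, i.e.\ $|.|_{\C}\circ\iota_F$; glueing these $\iota_F$ along the directed union $\bar{\Q}=\bigcup_F F$ produces $\iota$. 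Replacing $|.(x)|$ by a multiplicatively equivalent representative, I may thus assume $\bar{\Q}\subset\C$ and $|c(x)|=|c|_{\C}$ for $c\in\bar{\Q}$.

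If $|.(x)|$ has non-trivial kernel, that kernel is a non-zero prime of $\bar{\Q}[X]$, so of the form $(X-a)$ with $a\in\bar{\Q}$ (as $\bar{\Q}$ is algebraically closed); then $|.(x)|$ factors through evaluation at $a$ followed by a tempered multiplicative norm on $\bar{\Q}$ with $|2|>1$, which by the sub-lemma is $\sim_m|.|_{\C}$, giving conclusion (1). Assume henceforth the kernel is trivial, so $|.(x)|$ extends to $\bar{\Q}(X)$. Call a value of $|.(x)|$ on a non-zero polynomial \emph{infinite} (resp.\ \emph{infinitesimal}) if it is $>q$ (resp.\ $<q$) for every $q\in\Q_{>0}$. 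Using the pre-archimedean inequality $|u+v|\le\max(|2|,1)\max(|u|,|v|)$, that $|c|_{\C}$ is a finite positive real for $c\in\bar{\Q}^{\times}$, and that $|a-b|_{\C}>0$ for $a\neq b$ in $\bar{\Q}$, one checks: (a) if some $|X-a|$ is infinite, then $|X|$ and every $|X-b|$ are infinite; (b) if some $|X-a|$ is infinitesimal, then this $a$ is unique and $|X-b|$ lies between two positive rationals for every $b\neq a$. From (a), ``not upper bounded'' forces all $|X-b|$ infinite, hence lower bounded; from (b), ``not lower bounded'' forces all $|X-b|$, $b\neq a$, bounded above, hence upper bounded. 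So the four cases of the statement are exhaustive, and I treat them in turn.

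In Case (2) (upper and lower bounded), every value of $|.(x)|$ on a non-zero polynomial, hence on all of $\bar{\Q}(X)^{\times}$, lies between two positive rationals; tempered growth of $R$ rules out $t\in R$ with $1<t$ but $t^{n}$ bounded by a constant for all $n$ (such $t$ would satisfy $t^{n}\le P(n)$ for a constant $P\in\N[X]$, forcing $t\le1$), so the value group is archimedean and H\"older's theorem embeds it in $\R_{+}$. Thus $|.(x)|\sim_m$ an $\R_{+}$-valued multiplicative seminorm $|.|'$ on $\bar{\Q}[X]$; being bounded by a constant times a sup-norm on a compact (again the pre-archimedean inequality), $|.|'$ is continuous and extends uniquely to a bounded multiplicative seminorm on $\C[X]$, which by the classical description of $\Mc(\C[X])$ (Gelfand--Mazur; \cite{Berkovich1}, Chapter 1) is evaluation at some $a\in\C$. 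Since the kernel of $|.(x)|$ on $\bar{\Q}[X]$ is trivial, $a\notin\bar{\Q}$, giving conclusion (2).

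In Cases (3) and (4) the value group is non-archimedean, and the role of temperedness is to pin down the values exactly. In Case (3) there is a unique $a\in\bar{\Q}$ with $q:=|X-a|$ infinitesimal; expand $P\in\bar{\Q}[X]$ as $\sum_{i\ge i_{0}}a_{i}(X-a)^{i}$ with $a_{i_{0}}\neq0$, so that $|a_{i}(X-a)^{i}|=|a_{i}|_{\C}q^{i}$ by multiplicativity, and the $i=i_{0}$ term strictly dominates the others (extra powers of $q$ are infinitesimal). The pre-archimedean inequality squeezes $|P|$ between $|2|^{-1}|a_{i_{0}}|_{\C}q^{i_{0}}$ and $|2|\,|a_{i_{0}}|_{\C}q^{i_{0}}$; applying this to $P^{n}$ (leading term $a_{i_{0}}^{n}(X-a)^{ni_{0}}$) squeezes $(|P|/|a_{i_{0}}|_{\C}q^{i_{0}})^{n}$ into $[|2|^{-1},|2|]$ for all $n$, and since $|2|=2$ is dominated by the constant polynomial $2\in\N[X]$, tempered growth forces $|P|=|a_{i_{0}}|_{\C}q^{i_{0}}$; this identifies $|.(x)|$ with $|.|_{2,a}$ on the value halo $\R_{+}\lextimes R_{q^{\Z}}$, which is conclusion (3). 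Case (4) (lower, not upper bounded) is parallel with $q:=|X|$ infinite and the top-degree term $a_{n}X^{n}$ dominating, yielding conclusion (4) with $|.|_{2,\infty}$. The main obstacle is exactly this pinning-down step, common to Cases (2)--(4) and the sub-lemma: converting ``the values lie between two rationals, up to infinitesimals'' into an exact determination of the value group (as $\R_{+}$, resp.\ as $\R_{+}\lextimes R_{q^{\Z}}$), which is where tempered growth must replace the Archimedean axiom and where the careful propagation of the constant $|2|=2$ through the substitution $P\rightsquigarrow P^{n}$ is delicate; a secondary technical point is the reduction of the (over $\Q$) upper/lower boundedness hypotheses to statements about the complex roots of polynomials, handled by passing to norms over Galois orbits.
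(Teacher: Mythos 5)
Your proof uses the same four-way decomposition (kernel, upper/lower boundedness) and the same core mechanism as the paper: reduce a sum to its dominant term up to controlled error, raise to the $n$-th power, and let tempered growth collapse the ambiguity. But your execution of cases (3) and (4) is genuinely cleaner and more symmetric: you expand $P$ in powers of the dominant monomial and bound the remainder directly in both cases, whereas the paper handles case (3) by showing $|X-x|=|x-a|$ for all $x\neq a$ and then factoring, and handles case (4) by a detour through a valuation ring and a partial-fraction computation of $A=\{P/Q:\deg P\leq\deg Q\}$. Your unified squeeze is preferable, provided you make explicit the step you correctly flag as delicate: the remainder $P^n-a_{i_0}^n(X-a)^{ni_0}$ has $O(n\deg P)$ terms each carrying at least one extra factor of the infinitesimal $q$, a finite sum of $q$-infinitesimals is again $q$-infinitesimal (this uses that $q<|\lambda|$ for \emph{every} $\lambda\in\bar\Q^*$, so multiplying by the integer number of terms does not destroy infinitesimality), and only then does the pre-archimedean inequality give the uniform sandwich $[|2|^{-1},|2|]$ independent of $n$.

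Two places still need strengthening. First, your opening sub-lemma (that any tempered multiplicative norm on $\bar\Q$ with $|2|>1$ is $\sim_m|.|_\C\circ\iota$) is asserted rather than proved: ``its value group is multiplicatively equivalent to $|F^\times|_\infty$'' is the conclusion, not a step. What actually works is: (i) every $\alpha\in F^\times$ satisfies a monic integral polynomial, and that forces $|\alpha|$ to lie between two positive rationals, so by your tempered-growth argument the value group of $\nu|_F$ is archimedean; (ii) H\"older gives an order-embedding $\phi:\Gamma\hookrightarrow\R_{>0}$; but (iii) the resulting $\phi\circ\nu$ is only known to satisfy $|a+b|\leq\phi(\nu(2))\max(|a|,|b|)$, and you must invoke Artin's convexity lemma (exactly as the paper does inside Lemma \ref{upper-lower-bounded}) to upgrade this to the triangle inequality. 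An order-embedding of value groups does \emph{not} by itself transport subadditivity, since $\nu(a)+\nu(b)$ need not lie in $\Gamma$. The same Artin step is needed in your case (2), where you again jump from ``the value group embeds in $\R_{+}$'' to ``$|.(x)|\sim_m$ an $\R_+$-valued seminorm''. (To be fair, the paper also uses the sub-lemma on $\bar\Q$ without proof in case (1) and again in case (3), so this gap is inherited from the source; but if you want a self-contained argument you should spell out (i)--(iii).)
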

\begin{proof}
First recall from Proposition \ref{harmonious-spectrum-of-Z} that
since $|2(x)|>1$, $|.(x)|_{|\Z}$ is multiplicative and
equivalent to the usual archimedean norm $|.|_{\C}$.
Since $|.(x)|$ is multiplicative, its kernel is a prime ideal
of $\bar{\Q}[X]$. If this ideal is non-trivial, it is of the form
$(X-a)$ for $a\in \bar{\Q}$. Then $|.(x)|$ factors through
$\bar{\Q}[X]/(X-a)\cong \bar{\Q}$ and it is equivalent
to $|.(a)|_{\C}$. From now on, we suppose that $|.|$ has
trivial kernel.

Now suppose that $|.(x)|$ is both upper and lower bounded.
Then the natural inclusion $i:\bar{\Q}\to \bar{\Q}[X]$ is continuous
for the topology induced on both rings by $|.(x)|$. Indeed,
if $|b(x)|\neq 0$ with $b\in \bar{\Q}[X]$ then there exists
$b'=\mu_{b}\in \bar{\Q}$ such that $|b'(x)|\leq |b(x)|$ so
that $B(0,|b'|)\subset B(0,|b|)$ and $i$ is continuous. This
implies that $i$ extends to $i:\C\to \C[X]$. We
put on $R$ the uniform structure generated
by the sets
$$U_{|\lambda|}=\{(x,y)\in R\times R|\,\max(x,y)\leq\min(y+|\lambda|,x+|\lambda|)\}$$
for $\lambda\in \Q^*$, and denote by $\hat{R}$ the completion of $R$ for
this uniform structure. Then $|.|:\bar{\Q}[X]\to R$ is uniformly continuous
and extends at least to $|.|:\C[X]\to \hat{R}$. We will now replace $R$ by
$\hat{R}$. The fact that $|.|$ is multiplicatively equivalent to an $\R_{+}$-valued
power-multiplicative seminorm follows from forthcoming Lemma
\ref{upper-lower-bounded}. It is well known that such a seminorm
is the composition of the complex norm with evaluation at a point
$a$ of $\C$.

If we suppose that $|.(x)|$ is upper but not lower bounded,
since $|.|$ is multiplicative, the non-archimedean
seminorm $|.|':\bar{\Q}[X]\to K_{R^*/R_{2}}$ (where $R_{2}$
is the convex subgroup generated by $|2|$) is multiplicative,
so that $\pfk=\{x\in \bar{\Q}[X]|\,|x|'<1\}$ is a prime ideal in $\bar{\Q}[X]$.
If it is reduced to $(0)$ then $|.|$ is also lower bounded which
is a contradiction. We thus have $\pfk=(X-a)$ for some $a\in \bar{\Q}$.
Let $P$ be a non-zero polynomial. We want to prove that for all
$x\in \bar{\Q}$, $x\neq a$,
$|X-x|=|x-a|$. First remark that
$\frac{|X-x|}{|x-a|}\leq 1+\frac{|X-a|}{|x-a|}$ so that
$$
\frac{|X-x|^n}{|x-a|^n}\leq
\sum_{p=0}^n|\binom{n}{p}|.\frac{|(X-a)|^p}{|x-a|^p}.
$$
But we know that $|X-a|$ is smaller than any $|\lambda|$ for
$\lambda\in \bar{\Q}^*$ so that
$|\binom{n}{p}|.\frac{|X-a|^p}{|x-a|^p}
\leq 1$. This gives us
$\frac{|X-x|^n}{|x-a|^n}\leq n+1$ so that
$|X-x|\leq |x-a|$. Since $(X-x)\notin \pfk$,
we know that $|X-x|\geq |\mu_{x}|>0$ for some
$\mu_{x}\in \bar{\Q}^*$. This allows us to prove as before
that $|X-x|\geq |x-a|$ so that
we have equality. By induction, if $P=(X-a)^n.u\prod_{x}(X-x)$
with $x\neq a$ and $u\in \bar{\Q}^*$, then
$|P|=|(X-a)^n.u\prod_{x}(x-a)|=|X-a|^n.|u\prod_{x}(x-a)|$.
Since $|.|_{|\bar{\Q}}$ is equivalent to $|.|_{\C}$ and $|X-a|$
is smaller than any $|\lambda|$ for $\lambda\in \bar{\Q}^*$,
we get that $|.|$ is multiplicatively equivalent to
$|.|_{2,a}$ for $a\in \bar{\Q}$.

Now suppose that $|.|$ is lower but not upper bounded.
We will show that if $P=\sum_{i=0}^m a_{i}X^i$ with
$a_{m}\neq 0$ then $|P|=|a_{m}X^m|$.
Since $|.|$ is multiplicative, the non-archimedean
seminorm $|.|':\bar{\Q}[X]\to K_{\R_{+}^*/R_{2}}$ is multiplicative
and extends to $\bar{\Q}(X)$. The subset $A=\{R\in \bar{\Q}[X]|\,|R|'\leq 1\}$
is a valuation ring in $\bar{\Q}(X)$ and
$\pfk=\{R\in \bar{\Q}(X)|\,|R|'<1\}$ is a prime ideal in $A$.
Remark that since $|.|$ is not upper bounded, we know from
forthcoming Lemma \ref{not-upper-bounded-monomials} that for all $x\in \bar{\Q}$
and all $\lambda\in \Q^*$, $|X-x|>|\lambda|$. This implies that $A$ is
a subring of $\bar{\Q}(X)$ that contains the algebra generated by
$\{\frac{1}{X-x}\}_{x\in \bar{\Q}}$. Remark that using the decomposition of
rational fractions in simple parts, we know that a quotient
$\frac{P}{Q}$ of two polynomials $P$ and $Q$ is in $A$ if and only
if $\deg(P)\leq\deg(Q)$. Indeed, if $\frac{P}{Q}\in A$, we can write
$$\frac{P}{Q}=R_0+\sum_{i=1}^m \frac{c_i}{Q_i}$$
with $c_i\in \bar{\Q}^*$, $Q_i\in \bar{\Q}[X]$ of non-zero degree and $R_0\in\bar{\Q}[X]$ of degree equal to
$\deg(P)-\deg(Q)$ (saying that $\deg(0)=-\infty$). If $\deg(P)>\deg(Q)$
then $\deg(R_0)>0$ so that $|R_0|>|\lambda|$ for all $\lambda\in \bar{\Q}^*$.
But since the $Q_i$'s are of non-zero degree, we have
$\left|\frac{c_i}{Q_i}\right|'\leq 1$. Now remark that
$$|R_0|'=\left|\frac{P}{Q}-\sum_{i=1}^m \frac{c_i}{Q_i}\right|'\leq
\max\left(\left|\frac{P}{Q}\right|',\left|\frac{c_i}{Q_i}\right|'\right).$$
This implies that $|R_{0}|'\leq 1$ so that $|R_{0}|$ is bounded by
some $|\lambda|$ for $\lambda\in \bar{\Q}^*$,
which contradicts the hypothesis. We thus have shown that
$$A=\left\{\frac{P}{Q}\in \bar{\Q}(X)|\deg(P)\leq \deg(Q)\right\}.$$
Now denote $P=\sum_{i=0}^m a_{i}X^i$ with $a_{m}\neq 0$
and $S=P-a_{m}X^m=\sum_{i=0}^{m-1}a_{i}X^i$. 
Consider the inequality
$$
\left(\frac{|P|}{|a_{m}X^m|}\right)^r\leq
\left(1+ \frac{|S|}{|a_{m}X^m|}\right)^r\leq
\sum_{j=0}^r|\binom{r}{j}|\frac{|S|}{|a_{m}X^m|}.
$$
Since $\deg(S)<m=\deg(a_{m}X^m)$, we have $|S|'<|a_{m}X^m|'$
which means that for all $\lambda\in \Q^*$,
$|S|<|a_{m}X^m|.|\lambda|$. This implies in particular that
$|\binom{r}{j}|\frac{|S|}{|a_{m}X^m|}\leq 1$ so that
$$\left(\frac{|P|}{|a_{m}X^m|}\right)^r\leq r+1$$
for all $r\in \N$
and since $R$ has tempered growth, we get
$|P|\leq |a_{m}X^m|$.
Similarly, using that $a_{m}X^m=P-S$, we obtain the inequality
$$
\left(\frac{|a_{m}X^m|}{|P|}\right)^r\leq
\left(1+ \frac{|S|}{|P|}\right)^r\leq
\sum_{j=0}^r|\binom{r}{j}|\frac{|S|}{|P|},
$$
and since $\deg(S)<\deg(P)$, we get that
$|P|\geq |a_{m}X^m|$ so that
$$|\sum_{i=0}^m a_{i}X^i|=|a_{m}X^m|.$$
This shows that $|.|$ is multiplicatively equivalent to $|.|_{2,\infty}$.
\end{proof}

\begin{lemma}
\label{upper-lower-bounded}
Let $A$ be a $\C$-algebra and $|.|:A\to R$ be a tempered
power-multiplicative seminorm on $A$ whose restriction to $\C$ is multiplicatively
equivalent to the usual archimedean norm $|.|_{\C}$.
If $|.|$ has trivial kernel and $|.|$ is both upper and lower bounded, then
$|.|$ is equivalent to an $\R_+$-valued seminorm.
\end{lemma}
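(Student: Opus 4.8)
The plan is to manufacture the desired $\R_{+}$-valued seminorm as a ``real rounding'' of $|.|$ along the copy of $\R_{>0}$ that the hypothesis on $\C$ plants inside $R$, and then to use temperedness to check that this rounding is both faithful and genuinely subadditive.

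First I would record the order-theoretic skeleton. Since $|.|_{|\C}$ is multiplicatively equivalent to $|.|_{\C}$ it is in particular multiplicative (multiplicativity transfers along multiplicative equivalence), and it satisfies $|\lambda|\le|\mu|\Leftrightarrow|\lambda|_{\C}\le|\mu|_{\C}$ for $\lambda,\mu\in\C$; hence $t\mapsto|\lambda|$ (for any $\lambda\in\C$ with $|\lambda|_{\C}=t$) is a well-defined, injective, order- and multiplication-preserving map $\iota\colon\R_{>0}\hookrightarrow R^{*}$ with $\iota(1)=1$ (it need not be additive). As $R$ is an aura, $R^{*}$ is a totally ordered abelian group, so ratios in it are available. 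Upper and lower boundedness say precisely that for $a\neq0$ the sets $S_{a}=\{t\in\R_{>0}:|a|\le\iota(t)\}$ and $T_{a}=\{t\in\R_{>0}:\iota(t)\le|a|\}$ are both nonempty; since the order on $R$ is total, $S_{a}\cup T_{a}=\R_{>0}$ with $S_{a}$ an up-set and $T_{a}$ a down-set meeting in at most one point, so $\phi(a):=\inf S_{a}=\sup T_{a}$ is a well-defined positive real. Setting $\phi(0)=0$ gives a candidate map $\phi\colon A\to\R_{+}$, which by construction restricts to $|.|_{\C}$ on $\C$.

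Next I would prove that $\phi$ and $|.|$ induce the same preorder on $A$ (this is exactly what ``equivalent'' requires). One direction is immediate: $|a|\le|b|$ forces $S_{b}\subseteq S_{a}$, hence $\phi(a)\le\phi(b)$. For the converse, suppose $\phi(a)\le\phi(b)$ but $|b|<|a|$. Then $\phi(a)=\phi(b)=:c$, and choosing rationals $t<c<t'$ with $t'/t$ as close to $1$ as we like, the element $w:=|a|\,|b|^{-1}$ of $R^{*}$ satisfies $1<w\le\iota(t'/t)\le\iota(c_{0})$ for every real $c_{0}>1$; taking $c_{0}=2^{1/n}$ and using multiplicativity of $\iota$ gives $w^{n}\le\iota(2)=|1+1|\le|1|+|1|$, i.e. $w^{n}\le P(n)$ for all $n$, where $P=2\in\N[X]$ is the nonzero constant polynomial. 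Temperedness of $R$ then forces $w\le1$, contradicting $w>1$; so the two preorders coincide.

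Finally I would check that $\phi$ is a halo morphism $A\to\R_{+}$. Normalization ($\phi(0)=0$, $\phi(1)=1$) and submultiplicativity are formal: from $|ab|\le|a||b|\le\iota(t)\iota(r)=\iota(tr)$ for $t>\phi(a)$, $r>\phi(b)$ one reads off $\phi(ab)\le tr$, hence $\phi(ab)\le\phi(a)\phi(b)$, and the same argument with $\iota(t)^{n}=\iota(t^{n})$ shows $\phi$ is even power-multiplicative. Subadditivity is the one nontrivial point, and I would handle it by Artin's binomial estimate in the spirit of Theorem \ref{tempered-pre-archimedean}: using power-multiplicativity, $|a+b|^{s}\le\sum_{i=0}^{s}|\binom{s}{i}|\,|a|^{i}|b|^{s-i}\le\sum_{i=0}^{s}\iota(\binom{s}{i}t^{i}r^{s-i})$ for $t>\phi(a)$, $r>\phi(b)$; bounding this sum of $s+1$ terms by $(s+1)$ copies of $\iota$ of $\max_{i}\binom{s}{i}t^{i}r^{s-i}\le(t+r)^{s}$ gives $(|a+b|/\iota(t+r))^{s}\le(s+1)=P(s)$ with $P(X)=X+1\in\N[X]$, so temperedness yields $|a+b|\le\iota(t+r)$ and hence $\phi(a+b)\le\phi(a)+\phi(b)$. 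Thus $\phi$ is an $\R_{+}$-valued (power-multiplicative) seminorm equivalent to $|.|$. The crux, and the only place real work is needed, is exactly the order-faithfulness step and the subadditivity estimate: both are forced by the temperedness of $R$ together with the group structure on $R^{*}$, and neither would survive if $R$ were allowed non-tempered growth.
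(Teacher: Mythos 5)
Your proof is correct and reaches the stated conclusion, but it diverges from the paper's argument in two concrete ways, and it is worth comparing them. The paper also reads the embedded copy of $\R_{>0}$ off the hypothesis on $\C$ and forms the same Dedekind-type cut (its $|\lambda_{\infty}|_{\C}$ and $|\mu_{\infty}|_{\C}$ are your $\inf S_{a}$ and $\sup T_{a}$), but it then asserts the \emph{exact} equality $|P|=|\lambda_{P}|$ for some $\lambda_{P}\in\C^{*}$, so that $|.|$ factors set-theoretically through the image of $\iota$ on $A\setminus\{0\}$; this is the step where temperedness is really being used, and the paper leaves the squeeze argument implicit. You instead avoid the exact-equality claim entirely and show only that $|.|$ and $\phi$ induce the same preorder, which is precisely what ``equivalent'' asks for and makes the role of temperedness explicit. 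The second divergence is in subadditivity: the paper pushes the pre-archimedean estimate $|P+Q|_{1}\le 2\max(|P|_{1},|Q|_{1})$ across $\iota$ and then invokes Artin's classical Theorem 3 to upgrade it to the triangle inequality, whereas you re-run the binomial/temperedness estimate from the proof of Theorem \ref{tempered-pre-archimedean} directly on $\phi$, which is more self-contained and hands you power-multiplicativity of $\phi$ for free. The one thing the paper's stronger route buys and yours does not (as written) is \emph{multiplicative} equivalence of $|.|$ and $\phi$, which is what actually gets cited in the proof of Theorem \ref{archimedean-harmonious-affine-line}; the lemma's statement only claims ``equivalent,'' so you are not wrong, but if you want to match the downstream use you should add one more application of the same temperedness squeeze to conclude $|a|=\iota(\phi(a))$ exactly, from which multiplicative equivalence follows because $\iota$ is an order- and multiplication-preserving embedding.
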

\begin{proof}
Suppose that $\Ker(|.|)=(0)$ and $|.|$ is both upper and
lower bounded.
Let $P\in A$ be a non-zero element. Then there exist
$\lambda_P,\mu_P\in \C^*$ such that $0<|\lambda_P|\leq |P|\leq|\mu_P|$.
Let $\mu_\infty\in \C^*$ be such that
$|\mu_\infty|_\C=\sup\{|\mu_P|_\C,\,|\mu_P|\leq |P|\}$.
By construction, if $|\lambda_P|\geq |P|$, $|\mu_\infty|\leq |\lambda_P|$.
Now let $\lambda_\infty\in \C^*$ be such that
$|\lambda_\infty|_\C=\inf\{|\lambda_P|_\C,\,|\lambda_P|\geq |P|\}$.
By construction, if $|\mu_P|\leq |P|$, $|\lambda_\infty|\geq |\mu_P|$.
So we have $|\mu_\infty|\leq |\lambda_\infty|$. If $|\mu_\infty|<|\lambda_\infty|$,
then there exists $x\in \C^*$ such that $|\mu_\infty|<|x|<|\lambda_\infty|$.
If $|P|<|x|$, then $|x|\leq|\lambda_\infty|$ which is a contradiction.
Similarly, if $|P|>|x|$, we get a contradiction. If
$|P|=|x|$, we also get a contradiction. This shows that $|\mu_\infty|=|\lambda_\infty|$.
By definition of these two, we also get $|P|=|\mu_\infty|=|\lambda_\infty|$, so that
there exists $\lambda_P\in \C^*$ such that $|P|=|\lambda_P|$.
To sum up, for all $P\in A$, there exist $\lambda\in \C^*$ such
that $|P|=|\lambda_{P}|$, with
$|\lambda_{P}|_{\infty}=\inf\{|\lambda|_{\infty},|P|\leq |\lambda_{\infty}|\}=
\sup\{|\mu|_{\infty},|P|\leq |\mu_{\infty}|\}$.
We define $|P|_{1}:=|\lambda_{P}|_{\infty}$. Since $|.|_{|\C}\sim |.|_{\C}$,
this is well defined. Moreover, it is a seminorm.
Recall that we have $|P+Q|\leq |2|.\max(|P|,|Q|)$ so that
$|\lambda_{P+Q}|\leq |2|.\max(|\lambda_{P}|,|\lambda_{Q}|)
=\max(|2\lambda_{P}|,|2\lambda_{Q}|)$ because
$|.|$ is multiplicative on $\C$. But this implies
$|P+Q|_{1}\leq 2\max(|P|_{1},|Q|_{1})$. The fact that
one can deduce from this inequality that
$|P+Q|_{1}\leq |P|_{1}+|Q|_{1}$  
was known to E. Artin and can be found in \cite{E-Artin1}, Theorem 3.
We have $|PQ|\leq |P|.|Q|$ so that $|\lambda_{PQ}|\leq |\lambda_{P}|.|\lambda_{Q}|$
and since $|.|$ is multiplicative on $\C$, this implies
$|\lambda_{PQ}|\leq |\lambda_{P}\lambda_{Q}|$ and
$|PQ|_{1}\leq |P|_{1}.|Q|_{1}$.
If $|P|.|Q|\leq |PQ|$ then $|\lambda_{P}|.|\lambda_{Q}|\leq |\lambda_{PQ}|$
and since $|.|$ is multiplicative on $\C$, this implies
$|\lambda_{P}\lambda_{Q}|\leq |\lambda_{PQ}|$ so
that
$$
|P|_{1}.|Q|_{1}=|\lambda_{P}|_{\C}.|\lambda_{Q}|_{\C}=
|\lambda_{P}.\lambda_{Q}|_{\C}\leq |\lambda_{PQ}|_{\C}=|PQ|_{1}.
$$
This shows that $|.|$ and $|.|_{1}$ are multiplicatively equivalent and
we conclude that $|.|_{1}:\C[X]\to \R_{+}$ is a power-multiplicative seminorm.
\end{proof}

\begin{lemma}
\label{not-upper-bounded-monomials}
A power-multiplicative seminorm on $\C[X]$ whose restriction to $\C$
is equivalent to $|.|_{\C}$ is upper bounded if and only if there exist $a\in \C$
and $n\in \Z$ such that $|X-a|\leq |n|$.
\end{lemma}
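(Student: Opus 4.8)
The plan is to prove the two implications separately, after two preliminary remarks. Write $|.|$ for the seminorm and $R$ for its (positive, totally ordered) value aura; recall that $R$ has tempered growth (as in the applications of this lemma). Since the restriction of $|.|$ to $\C$ is equivalent to $|.|_{\C}$ and $2\in\R\subset\C$, we get $|2|>1$, so applying Theorem~\ref{tempered-pre-archimedean} to $\Z\subset\C[X]$ shows that the restriction of $|.|$ to $\N$ is increasing for the usual order; and, being equivalent to the multiplicative seminorm $|.|_{\C}$, the restriction of $|.|$ to $\C$ is itself multiplicative. These are the only facts about the coefficient field that the argument will use.

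For the ``only if'' direction I would argue directly: if $|.|$ is upper bounded, apply this to the polynomial $X$ to obtain $\lambda\in\C^{*}$ with $|X|\le|\lambda|$, then pick an integer $n$ with $|n|_{\C}\ge|\lambda|_{\C}$ (possible since the absolute values of integers are unbounded). Equivalence of the restriction of $|.|$ to $\C$ with $|.|_{\C}$ gives $|\lambda|\le|n|$ in $R$, hence $|X-0|=|X|\le|n|$, which is the assertion with $a=0$. This direction is essentially immediate.

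The content lies in the ``if'' direction. Assume $|X-a|\le|n|$ for some $a\in\C$, $n\in\Z$. If $n=0$ then $X-a\in\Ker(|.|)$, so $|.|$ factors through $\C[X]/(X-a)\cong\C$ and is trivially upper bounded; so assume $n\ne 0$. Given $P\in\C[X]\setminus\{0\}$ of degree $d$, factor it over the algebraically closed field $\C$ as $P=c\prod_{i=1}^{d}(X-a_i)$ with $c\in\C^{*}$; by submultiplicativity $|P|\le|c|\prod_i|X-a_i|$, and by multiplicativity on $\C$ it is enough to bound each $|X-a_i|$ by $|e_i|$ for suitable $e_i\in\C^{*}$. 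For $a_i=a$ this is the hypothesis (take $e_i=n$). For $a_i\ne a$, set $M=\max(|n|,|a-a_i|)=|c_i'|$ for a suitable $c_i'\in\C^{*}$; expanding $((X-a)+(a-a_i))^{s}$ and using $|X-a|\le|n|$, subadditivity, submultiplicativity, power-multiplicativity and $|\binom{s}{k}|\le|2^{s}|=|2|^{s}$ (monotonicity on $\N$), one gets $|X-a_i|^{s}\le(s+1)\,|2|^{s}M^{s}$ for every $s\in\N$. With $e_i:=2c_i'\in\C^{*}$ and multiplicativity on $\C$ this reads $\bigl(|X-a_i|\,|e_i|^{-1}\bigr)^{s}\le s+1$ in $R$ for all $s$, so tempered growth of $R$ (applied with the polynomial $X+1\in\N[X]$) forces $|X-a_i|\le|e_i|$. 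Multiplying the bounds yields $|P|\le|c|\prod_i|e_i|=\bigl|c\prod_i e_i\bigr|$, so $|.|$ is upper bounded.

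The main obstacle I expect is precisely this last estimate: the translation $a\rightsquigarrow a_i$ destroys the multiplicativity one would like to use on the factor $X-a_i$, so the bound must be pushed through the tempered-growth mechanism by raising to the $s$-th power and invoking the binomial inequality, exactly in the manner of Artin's proof of Ostrowski's theorem reproduced in Theorem~\ref{tempered-pre-archimedean}; the remaining bookkeeping (multiplicativity of $|.|$ on $\C$, unboundedness of the absolute values of integers, the degenerate case $n=0$) is routine.
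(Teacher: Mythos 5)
Your proof is correct, and it rests on the same three pillars as the paper's: factor $P$ over the algebraically closed field $\C$, use submultiplicativity $|P|\le|c|\prod|X-a_i|$, and bound each $|X-a_i|$ by an element of $\C^*$ using the hypothesis $|X-a|\le|n|$. The ``only if'' direction and the degenerate case $n=0$ you handle are exactly as one expects.

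The one place where you diverge from the paper is in how the bound on $|X-a_i|$ is obtained. The paper invokes the pre-archimedean inequality $|u+v|\le|2|\max(|u|,|v|)$ directly (this is available because Theorem~\ref{tempered-pre-archimedean} says every tempered power-multiplicative seminorm is pre-archimedean), giving in one line $|X-a_i|\le|2|\max(|X-a|,|a_i-a|)\le\max(|2n|,|2n_i|)$. You instead re-derive this estimate from scratch by raising $((X-a)+(a-a_i))^s$ to the $s$-th power, bounding the binomial coefficients by $|2|^s$ via monotonicity on $\N$, and invoking tempered growth at the end. This is precisely the internal mechanism of the proof of Theorem~\ref{tempered-pre-archimedean}, so you are in effect re-proving a special case of the pre-archimedean property rather than citing it. Both routes are sound; the paper's is a few lines shorter because the intermediate estimate has already been packaged once and for all. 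One minor remark in your favour: you correctly write $|P|\le|c|\prod|X-a_i|$ (submultiplicativity suffices, since only an upper bound is wanted), whereas the paper writes an equality there, which would require multiplicativity that the lemma's hypothesis does not strictly grant.

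Two small bookkeeping points worth making explicit if you expand the write-up: the equality $\max(|n|,|a-a_i|)=|c_i'|$ uses that the restriction of $|.|$ to $\C$ is \emph{equivalent} to $|.|_\C$, so the comparison of $|n|$ and $|a-a_i|$ in $R$ can be decided by comparing $|n|_\C$ and $|a-a_i|_\C$; and the division by $|e_i|$ uses that $R$ is an aura (so nonzero elements are invertible) together with $|e_i|>0$, which again follows from equivalence with $|.|_\C$. Neither of these is a gap, but stating them keeps the argument self-contained.
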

\begin{proof}
One of the implications is clear.
If there exist $a\in \C$ and $n_{0}\in \Z$ such that $|X-a|\leq |n_{0}|$,
then for all $x\in \C$, there exists $n_{x}\in \Z$ such that we have
$$|X-x|\leq |2|\max(|X-a|,|x-a|)\leq\max(|2|.|n_{0}|,|2|.|n_{x}|)=\max(|2.n_{0}|,|2.n_{x}|)$$
so that there exists $m_{x}\in \Z$ such that $|X-x|\leq |m_{x}|$.
If $P$ is a polynomial, let $P=u\times \prod_{x_{i}}(X-x_{i})$
be the decomposition of $P$ in prime factors. Then we have
$|P|=|u_{i}|.\prod_{x_{i}}|X-x_{i}|$ and we get an $n_{P}\in \Z$ such that
$|P|\leq |n_{P}|$.
\end{proof}

\section{Analytic spaces}
We now want to define analytic functions on some convenient
subspaces of harmonious spectra. We first have to define
the value ring for analytic functions at a given place.
This will be given by what we call the multiplicative completion.

\subsection{Ball topologies and multiplicative completions}
\begin{definition}
Let $A$ be ring and $|.|:A\to R$ be a seminorm.
An open ball in $A$ for $|.|$ is a subset of the form
$B(x,|a|):=\{z\in A|\,|z-x|<|a|\}$ for $a\in A$
such that $|a|>0$.
The ball neighborhood topology on $A$, denoted
$\tau^{bn}_{|.|}$ if it exists, is the topology for which the non-empty open balls
form fundamental systems of neighborhoods of the points in $A$.
\end{definition}

We now give the coarsest conditions on a seminorm
$|.|:A\to R$ under which its ball neighborhood topology is well defined and induces
a topological ring structure on $A$.

\begin{definition}
Let $A$ be a ring and $|.|:A\to R$ be a seminorm.
We say that $|.|$ has tiny balls if
\begin{enumerate}
\item for all $|a|>0$, there exists $|a'|>0$ such that
$$B(0,|a'|)+B(0,|a'|)\subset B(0,|a|);$$
\item for all $|a|>0$ and $x\in A$, there exists $|c|>0$
such that
$$x.B(0,|c|)\subset B(0,|a|);$$
\item for all $|a|>0$, there exists $|a'|>0$ such that
$$B(0,|a'|).B(0,|a'|)\subset B(0,|a|).$$
\item for all $|a|>0$, there exists $|a'|>0$ such that
$-B(0,|a'|)\subset B(0,|a|)$.
\end{enumerate}
\end{definition}

\begin{remark}
\label{simpler-condition-tiny-3}
Since $R$ is totally ordered, the third condition is automatic.
Indeed, if $|a|>1$, then there exists
$a'=1$ such that $B(0,|1|).B(0,|1|)\subset B(0,|a|)$. Suppose
that $|a|=1$. If there exists no $c$ in $A$ such that $|c|<1$,
then $B(0,|1|)=\{0\}=B(0,|1|).B(0,|1|)$. If there exists
$c\in A$ such that $|c|<1=|a|$, then $|c|^2<|c|$ and
$B(0,|c|).B(0,|c|)\subset B(0,|c|)\subset B(0,|a|)$.
If $|a|<1$ then $|a|^2<|a|$ and $B(0,|a|).B(0,|a|)\subset B(0,|a|)$.
\end{remark}

\begin{remark}
\label{simpler-condition-tiny-4}
If $|.|$ is square-multiplicative, the fourth condition is also
automatic because $|-a|=|a|$ for all $a\in A$ so that
$-B(0,|a|)=B(0,|a|)$.
\end{remark}

\begin{proposition}
Let $A$ be a ring and $|.|:A\to R$ be a seminorm on $A$
that has tiny balls. Then the ball neighborhood topology
for $|.|$ exists and it induces a ring topology on $A$. 
\end{proposition}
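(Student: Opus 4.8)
The plan is to reduce everything to the balls $B(0,|a|)$ centered at $0$ and then to check the classical criteria for a fundamental system of neighborhoods of $0$ in a topological ring. The basic observation, used repeatedly, is that $|0|=0<|a|$ whenever $|a|>0$, so $0\in B(0,|a|)$ for every such $a$. Two consequences: first, since $R$ is totally ordered, given two balls $B(0,|a|)$ and $B(0,|b|)$ the smaller one (say $B(0,|a|)$ if $|a|\le|b|$) is contained in their intersection, so the balls around $0$ form a filter base; second, $B(0,|a'|)=B(0,|a'|)+0\subseteq B(0,|a'|)+B(0,|a'|)$, so once the first tiny-ball condition produces $|a'|>0$ with $B(0,|a'|)+B(0,|a'|)\subseteq B(0,|a|)$, we automatically have $B(0,|a'|)\subseteq B(0,|a|)$, and by iterating the first condition we get, for each $n$, balls whose $n$-fold sum lies in a prescribed ball. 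Finally the triangle inequality gives $B(x,|a|)=x+B(0,|a|)$ (the condition $|z-x|<|a|$ is literally $z\in x+B(0,|a|)$), so the candidate neighborhood system at $x$ is the translate by $x$ of the one at $0$, and the whole construction is translation invariant.

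First I would verify that these translated balls are the neighborhood base of a genuine topology, i.e. the axiom: for $U=x+B(0,|a|)$ there is a neighborhood $V$ of $x$ of which $U$ is a neighborhood at every point. Using the first tiny-ball condition pick $|a'|>0$ with $B(0,|a'|)+B(0,|a'|)\subseteq B(0,|a|)$ and take $V=x+B(0,|a'|)$; if $y=x+w$ with $|w|<|a'|$, then $y+B(0,|a'|)=x+(w+B(0,|a'|))\subseteq x+(B(0,|a'|)+B(0,|a'|))\subseteq U$, so $U$ contains the ball $B(y,|a'|)$. This is the step that genuinely needs care: one must see that the balls form a neighborhood base and not merely a filter base at $0$, and this is exactly what the halving condition is designed to provide. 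With it, $\tau^{bn}_{|.|}$ exists.

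It then remains to check that addition, negation and multiplication are continuous, working throughout with translated balls. For addition at $(x,y)$: given $(x+y)+B(0,|a|)$, the first condition gives $|a'|>0$ with $(x+B(0,|a'|))+(y+B(0,|a'|))\subseteq(x+y)+B(0,|a|)$. For negation at $x$: given $(-x)+B(0,|a|)$, the fourth condition gives $|a'|>0$ with $-(x+B(0,|a'|))=(-x)+(-B(0,|a'|))\subseteq(-x)+B(0,|a|)$. For multiplication at $(x,y)$: given $(xy)+B(0,|a|)$, iterate the first condition to obtain $|a_0|>0$ with the threefold sum $B(0,|a_0|)+B(0,|a_0|)+B(0,|a_0|)\subseteq B(0,|a|)$; then the second condition applied to $x$ and to $y$ with target $B(0,|a_0|)$ yields $|c_1|,|c_2|>0$ with $x\cdot B(0,|c_1|)\subseteq B(0,|a_0|)$ and $y\cdot B(0,|c_2|)\subseteq B(0,|a_0|)$, and the third condition (which by the earlier Remark is automatic) yields $|a_3|>0$ with $B(0,|a_3|)\cdot B(0,|a_3|)\subseteq B(0,|a_0|)$. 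Letting $|\delta|>0$ be the least of $|c_1|,|c_2|,|a_3|$ (it exists since $R$ is totally ordered), for $|u|<|\delta|$ and $|v|<|\delta|$ the identity $(x+u)(y+v)=xy+(xv+uy+uv)$ expresses the product as $xy$ plus a sum of three terms each lying in $B(0,|a_0|)$, hence in $B(0,|a|)$; thus $(x+B(0,|\delta|))\cdot(y+B(0,|\delta|))\subseteq(xy)+B(0,|a|)$. Apart from the neighborhood-base verification above, I expect no real obstacle here — it is the routine bookkeeping of matching each of the four tiny-ball axioms to the corresponding continuity requirement.
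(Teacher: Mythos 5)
Your proof is correct and follows essentially the same route as the paper: you check that the balls around $0$ form a filter base and that the four tiny-ball conditions supply exactly the translation-invariant neighborhood axiom and the continuity of addition, negation and multiplication. The only difference is that the paper delegates those verifications to Bourbaki (\emph{Topologie g\'en\'erale}, Chap.~III, \S1.2, Prop.~1 for the additive group topology and \S6.3 for the ring topology), whereas you unpack and verify them directly, which is more self-contained but not a different argument.
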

\begin{proof}
Consider the set $\Bc:=\{B(0,|a|)|0<|a|\in |A|\}$ of parts of
$A$. This set is a filter basis since
\begin{enumerate}
\item the intersection $B(0,|a|)\cap B(0,|a'|)$ is equal to $B(0,\min(|a|,|a'|))$;
\item since $R$ is positive, it is not empty because $B(0,1)\in \Bc$, and
$\emptyset\notin \Bc$ because $0\in B(0,|a|)$ for all $|a|>0$.
\end{enumerate}
Now by hypothesis, this filter basis is such that
\begin{enumerate}
\item for all $B(0,|a|)\in \Bc$, there exists $B(0,|a'|)\in \Bc$
such that $B(0,|a'|)+B(0,|a'|)\subset B(0,|a|)$;
\item for all $B(0,|a|)\in \Bc$, there exists $B(0,|a'|)\in \Bc$ such
that $-B(0,|a'|)\subset B(0,|a|)\in \Bc$.
\end{enumerate}
Then by Bourbaki \cite{Boutopo}, Chap. 3, \S 1.2, Proposition 1,
there exists only one topology on $A$ compatible with its addition and such that
$\Bc$ is the filter basis of the filter of neighborhoods of the unit $0$ of
$(A,+)$. This topology is the ball neighborhood topology.
Remark now that by hypothesis,
\begin{enumerate}
\item for all $x\in A$ and $B(0,|a|)\in \Bc$, there exists
$B(0,|c|)\in \Bc$ such that $x.B(0,|c|)\subset B(0,|a|)$.
\item for all $B(0,|a|)\in \Bc$, there exists $B(0,|a'|)\in \Bc$
such that $B(0,|a'|).B(0,|a'|)\subset B(0,|a|)$.
\end{enumerate}
Then by Bourbaki \cite{Boutopo}, Chap. 3, \S 6.3, the topological structure
defined above is a topological ring structure on $A$.
\end{proof}

\begin{definition}
Let $|.|:A\to R$ be a seminorm on a ring $A$. If $|.|$ has tiny
balls, the completion of the separated quotient $A/\overline{\{0\}}$
of $A$ for its topological ring structure is called the separated
completion of $A$ for $|.|$ and denoted $\hat{A}_{|.|}$.
Let $A_{M}$ be the localization of $A$ with respect to the
multiplicative subset $M=M_{|.|}$ of $|.|$. If $|.|:A_{M}\to R$
has tiny balls, we say that $|.|$ has multiplicatively tiny balls and
call the corresponding completion $\Ac(|.|)$ of $A_{M}$
the multiplicative completion of $|.|$.
\end{definition}

\begin{proposition}
\label{multiplicative-tiny}
Let $|.|:K\to R$ be a multiplicative seminorm on a field $K$.
If
\begin{itemize}
\item either there exists $u\in K$ such that $|u|>2$,
\item or there exists $|a|>0$ such that $|b|<|a|$ implies
$|b|=0$,
\end{itemize}
then $K$ has tiny balls for $|.|$ so that $|.|$ induces a ring topology
on $K$. This topology is separated.
\end{proposition}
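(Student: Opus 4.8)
The plan is to check directly that $|.|$ has tiny balls, so that the Proposition on ball topologies above supplies the ring topology, and then to verify separatedness by hand. Two preliminary observations organize everything. First, as $|.|$ is multiplicative its kernel is a prime ideal of the field $K$; it is not all of $K$ since $|1|=1\neq 0$ ($R$ being positive), so $\Ker(|.|)=(0)$. Hence $|x|>0$ for every $x\in K-\{0\}$, and $|x|$ is invertible in the semifield $R$. Second, multiplication in $R$ by any fixed $r>0$ is \emph{strictly} monotone: $p<q$ gives $rp\leq rq$ by compatibility, and $rp=rq$ would force $p=r^{-1}(rp)=q$; addition in $R$, by contrast, is only non-strictly monotone in general, which is the sole delicate point below.

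Axioms (3) and (4) of the tiny-balls definition come for free: (3) holds for any seminorm because $R$ is totally ordered (Remark \ref{simpler-condition-tiny-3}), and (4) because a multiplicative seminorm is square-multiplicative, so $|-a|=|a|$ by Lemma \ref{square-multiplicative-sign}, whence $-B(0,|a|)=B(0,|a|)$ (Remark \ref{simpler-condition-tiny-4}). Axiom (2) also holds with no extra hypothesis: given $|a|>0$ and $x\in K$, if $x=0$ then $x\cdot B(0,|c|)=\{0\}\subset B(0,|a|)$ with, say, $c=1$; and if $x\neq 0$ take $c=ax^{-1}$, so $|c|=|a|\,|x|^{-1}>0$, and $|z|<|c|$ forces $|xz|=|x|\,|z|<|x|\,|c|=|a|$ by multiplicativity and strict monotonicity of multiplication by $|x|>0$.

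The real work is axiom (1), and here the two hypotheses of the statement are used separately. If there is $u\in K$ with $|u|>2$ (that is, $|u|>1+1$ in $R$), then given $|a|>0$ put $a'=au^{-1}$, so $|a'|=|a|\,|u|^{-1}>0$. For $|z_1|,|z_2|<|a'|$ one has $|z_1+z_2|\leq|z_1|+|z_2|\leq|a'|+|a'|=2|a'|$ by subadditivity and compatibility of $+$ with $\leq$, while $2|a'|=2|a|\,|u|^{-1}<|u|\,|a|\,|u|^{-1}=|a|$, the strict step being $2<|u|$ multiplied by $|a|\,|u|^{-1}>0$; composing $\leq$ with $<$ gives $|z_1+z_2|<|a|$, i.e.\ $B(0,|a'|)+B(0,|a'|)\subset B(0,|a|)$. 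If instead there is $a_0\in K$ with $|a_0|>0$ such that $|b|<|a_0|$ implies $|b|=0$, then (the kernel being trivial) $B(0,|a_0|)=\{0\}$, and for every $|a|>0$ one takes $a'=a_0$: $B(0,|a_0|)+B(0,|a_0|)=\{0\}\subset B(0,|a|)$ since $0\in B(0,|a|)$. In this second case (1), (3), (4) are all trivial for the same reason.

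Finally, separatedness: the closure of $\{0\}$ for the ball topology is $\bigcap_{|a|>0}B(0,|a|)$, and if $z\neq 0$ then $|z|>0$ and $z\notin B(0,|z|)$, so this intersection is $\{0\}$; a topological group whose $\{0\}$ is closed is Hausdorff. I expect the only genuine obstacle to be the one highlighted at the outset: because addition in $R$ need not be strictly monotone (e.g.\ in tropical auras $a+b=\max(a,b)$), in the case $|u|>2$ one cannot conclude $|z_1|+|z_2|<2|a'|$ from the strict bounds on the $|z_i|$; one must pass to the non-strict estimate $|z_1|+|z_2|\leq 2|a'|$ and then extract the needed strict inequality $2|a'|<|a|$ multiplicatively — which is exactly the role of the hypothesis $|u|>2$ (respectively, of the collapse $B(0,|a_0|)=\{0\}$ in the other case).
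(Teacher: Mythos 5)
Your proof is correct and follows essentially the same route as the paper: dispose of conditions (3), (4) via the standing remarks, get (2) from multiplicativity and invertibility of $|x|$ in the semifield, and for (1) split on the two hypotheses, scaling $a$ down by $u^{-1}$ in the archimedean-type case and observing $B(0,|a_0|)=\{0\}$ in the discrete case. The only noticeable (and entirely acceptable) deviation is the separatedness argument: the paper explicitly exhibits disjoint balls $B(x,|d|)$, $B(y,|d|)$ using $|d|+|d|<|x-y|$, whereas you invoke the standard fact that a topological group with closed $\{0\}$ is Hausdorff and check $\bigcap_{|a|>0}B(0,|a|)=\{0\}$ from the trivial kernel — a shorter and cleaner finish.
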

\begin{proof}
If there exists $|a|>0$ such that $|b|<|a|$ implies $|b|=0$, then
$B(0,|a|)=\{0\}$ (because the kernel of $|.|$ is an ideal that must me
reduced to $\{0\}$ since $K$ is a field) and all conditions for $K$ to have tiny balls
are clearly fulfilled. Moreover, the corresponding topology is the
discrete topology on $K$ and is separated.
Suppose now that for all $|a|>0$, there
exists $|a'|$ such that $0<|a'|<|a|$.
Let $b,c\in K$ be such that $|b|>0$ and $|c|>0$.
Applying the hypothesis to $|b/a|=|b|/|a|$, we show
that there exists $d\in K$ such that
$|d|<|b/a|$, i.e. $0<|d|.|a|<|b|$. This implies the
second condition for $|.|$ to have tiny balls.
Now remark that by hypothesis, there exists $u\in K$ such
that $|u|>2$. Let $a'\in K$ be such that $0<|a'|<|a|$.
Then if we let $d=a/u$, we have $|d|+|d|\leq \frac{|a'|}{2}+\frac{|a'|}{2}=|a'|<|a|$. This
is the second condition for $|.|$ to have tiny balls.
By Remarks \ref{simpler-condition-tiny-3} and \ref{simpler-condition-tiny-4},
since $|.|$ is multiplicative,
we have proved that $|.|$ has tiny balls and this implies that
$|.|$ induces a ring topology on $A$. Since $|.|$ is multiplicative,
its kernel is trivial. Suppose that $x$ and $y$ are two distinct
elements of $K$. Then $|x-y|>0$ in $R$ and we know by what
we did above that there exists $|d|>0$ such that $0<|d|+|d|<|x-y|$.
Now if $x\in B(x,|d|)\cap B(y,|d|)$, then
$$|x-y|\leq |x-z|+|y-z|\leq |u|+|u|<|x-y|,$$
which is a contradiction, so that $B(x,|d|)\cap B(y,|d|)=\emptyset$
and the topology is separated.
\end{proof}

\begin{corollary}
Let $|.|:K\to R$ be a tempered multiplicative seminorm on a field $K$.
Then $K$ has tiny balls for $|.|$.
\end{corollary}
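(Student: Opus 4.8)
The plan is to reduce everything to Proposition~\ref{multiplicative-tiny}: since $|.|:K\to R$ is a multiplicative seminorm on a field, it suffices to verify that it satisfies (at least) one of the two alternative hypotheses listed there, and the temperation hypothesis is exactly what makes this possible. I would argue by a dichotomy on whether $|.|$ is trivial on $K$ or not (equivalently, on the value group $|K^\times|\subset R^\times$, which is a totally ordered abelian group because $|.|$ is multiplicative and $R$ is an aura).

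First suppose that $|.|$ is not trivial on $K$, i.e.\ there is $v\in K^\times$ with $|v|\neq 1$. Since $|v|\,|v^{-1}|=|1|=1$ in the semifield $R$ and the order on $R$ is total, one of $|v|$, $|v^{-1}|$ is $>1$; replacing $v$ by $v^{-1}$ if necessary, I may assume $|v|>1$ in $R$. Now I would apply the temperation condition to the non-zero constant polynomial $P(X)=2\in\N[X]$: were it true that $|v|^n\le P(n)=1+1$ in $R$ for every $n\in\N$, temperation would force $|v|\le 1$, contrary to assumption. Hence there is an $n$ with $|v|^n>2$ in $R$, and then $u:=v^n\in K$ satisfies $|u|=|v|^n>2$ by multiplicativity, which is precisely the first alternative hypothesis of Proposition~\ref{multiplicative-tiny}. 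I expect this to be the only step with any real content; the point is just that ``tempered growth'', applied to the constant polynomial $2$, already says that an element of absolute value $>1$ cannot have all its powers bounded by $2$, which is exactly what is needed to produce an element of absolute value $>2$.

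In the complementary case $|v|=1$ for all $v\in K^\times$ — that is, $|.|$ is the trivial seminorm on $K$ — I would instead verify the second alternative hypothesis with $a=1$: one has $|a|=1>0$ because $R$ is positive, and any $b\in K$ with $|b|<1=|a|$ cannot lie in $K^\times$, so $b=0$ and $|b|=0$. Thus in both cases Proposition~\ref{multiplicative-tiny} applies and gives that $K$ has tiny balls for $|.|$, as desired. (One could alternatively organise the case split around whether $|2|>1$ or $|2|\le 1$, using Theorem~\ref{tempered-pre-archimedean} and Lemma~\ref{pre-non-archimedean}, but the dichotomy above avoids invoking those results and treats the characteristic-$2$ field case uniformly.)
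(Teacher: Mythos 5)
Your proof is correct and follows essentially the same route as the paper: both reduce to Proposition~\ref{multiplicative-tiny} and obtain an element of absolute value $>2$ by applying the tempered-growth condition to the constant polynomial $P(X)=2$, using multiplicativity to get $|v^n|=|v|^n$. The only difference is cosmetic — you organize the dichotomy around triviality of $|.|$ on $K^\times$ and argue directly, whereas the paper splits on the second alternative of the proposition and argues by contradiction; for a multiplicative seminorm on a field these dichotomies coincide.
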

\begin{proof}
From Proposition \ref{multiplicative-tiny}, we are reduced to suppose that
there exists no $|a|>0$ such that $|b|<|a|$ implies $|b|=0$.
We will show that there exists $u\in K$ such that $|u|>2$.
Indeed, if $|x|\leq 2$ for all $x\in K$,
then $|x|^n\leq 2$ for all $n\in \N$ and $x\in K$
but since $R$ has tempered growth, this implies $|x|\leq 1$ for
all $x\in K$. Since there exists $x$ such that $0<|x|<1$, we have
$|1/x|=1/|x|>1$, which gives a contradiction. So there exists a
$u\in K$ such that $|u|>2$. Proposition \ref{multiplicative-tiny}
concludes the proof.
\end{proof}

The definition of the notion of multiplicative completion was given
to get the following.
\begin{corollary}
Let $|.|:A\to R$ be a tempered multiplicative seminorm on a ring $A$.
Then $|.|$ has multiplicatively tiny balls so that the multiplicative completion of $A$
for $|.|$ is well defined and it is isomorphic to the completion of the residue field
$\Frac(A/\pfk_{|.|})$ with respect to its induced seminorm.
\end{corollary}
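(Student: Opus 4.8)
The plan is to reduce to the field case already settled by the preceding Corollary, and then transport both the tiny-ball property and the completion along the quotient by the kernel. First I would pin down $A_M$. Since $|.|$ is multiplicative and $R$ is a semifield, one has $|a|\in R^\times$ exactly when $a\notin\pfk:=\Ker(|.|)$, which is a prime ideal because $|.|$ is multiplicative; and multiplicativity provides the second clause of Definition \ref{definition-multiplicative} for free. Hence $M=M_{|.|}=A\setminus\pfk$, so $A_M=A_\pfk$ is the local ring at $\pfk$, and by Proposition \ref{prop-localization-multiplicative} the seminorm factors through $A\to A_\pfk$ with $\Ker(|.|:A_\pfk\to R)=\pfk A_\pfk$, the maximal ideal. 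Factoring once more through $\pi:A_\pfk\to A_\pfk/\pfk A_\pfk=\Frac(A/\pfk)=:K$ yields on the field $K$ a tempered (temperedness being a property of $R$) multiplicative seminorm.

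Next I would apply the Corollary just above to $|.|:K\to R$ to get that $K$ has tiny balls. To push this back up to $A_\pfk$ I would use that $\pi$ is a surjective ring map whose kernel $\Ker(|.|)$ is contained in every nonempty open ball $B(0,|a|)$ (its elements have norm $0<|a|$); thus $B_{A_\pfk}(0,|a|)=\pi^{-1}(B_K(0,|\pi(a)|))$ and $\pi$ matches up the two filter bases of balls. Each tiny-ball condition for $A_\pfk$ then translates into the corresponding one for $K$: the third and fourth conditions hold automatically anyway by Remarks \ref{simpler-condition-tiny-3} and \ref{simpler-condition-tiny-4} (a multiplicative seminorm is square-multiplicative), while for the first and second one lifts along $\pi$ the witnesses $a'$, resp. $c$, supplied by the tiny-ball property of $K$. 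This shows $A_M$ has tiny balls, i.e. $|.|$ has multiplicatively tiny balls, so $\Ac(|.|)$ is defined.

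Finally I would identify the completion. The closure of $\{0\}$ in $A_\pfk$ is $\bigcap_{|a|>0}B(0,|a|)$, and if $x\notin\pfk A_\pfk$ then $|x|>0$ and $x\notin B(0,|x|)$, so $\overline{\{0\}}=\pfk A_\pfk=\Ker(|.|)$. Hence the separated quotient of $A_\pfk$ is exactly $K=\Frac(A/\pfk)$ carrying the (separated) topology of its induced seminorm, and the multiplicative completion $\Ac(|.|)$, being the completion of $A_M$, is the completion of $\Frac(A/\pfk_{|.|})$ with respect to its induced seminorm.

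I expect the only real content to sit in the first paragraph: it is multiplicativity that forces $M_{|.|}=A\setminus\pfk$ and hence makes the separated completion of $A_M$ collapse onto the residue \emph{field} rather than onto some larger local ring; temperedness then enters only through the already-proven field Corollary, and the remaining steps are routine transfer of the ball topology and its completion across the surjection $\pi$.
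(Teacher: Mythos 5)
Your proof takes essentially the same route as the paper's: identify $M_{|.|}=A\setminus\pfk$ via multiplicativity, localize to $A_\pfk$, factor the seminorm through $K=\Frac(A/\pfk)$, and invoke the preceding field-case corollary for tiny balls. You are a bit more explicit than the paper in transferring the tiny-ball property from $K$ back to $A_\pfk$ (using that balls in $A_\pfk$ are precisely $\pi$-preimages of balls in $K$) and in computing $\overline{\{0\}}=\pfk A_\pfk$, but the underlying argument is the same.
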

\begin{proof}
Since $|.|$ is multiplicative, its set of multiplicative elements
is $M_{|.|}=A-\pfk_{|.|}$ where $\pfk_{|.|}$ is the prime ideal that
is the support of $|.|$. The corresponding localization is the local
ring $A_{(\pfk)}$ and $|.|$ factorizes through it. By Proposition
\ref{multiplicative-tiny}, the field $A_{(\pfk)}/\pfk=\Frac(A/\pfk)$ is
separated for the topology induced by $|.|$ and
it is equal to the separated quotient of $A_{(\pfk)}$.
We also know that the extension $|.|:\Frac(A/\pfk)\to R$ has
tiny balls so that its completion is well defined.
It is equal to the multiplicative completion of
$A$ for $|.|$.
\end{proof}

\subsection{The functoriality issue for multiplicative completions}
\label{functoriality-issue}
Let $f:B\to A$ be a ring morphism and $|.|:A\to R$ be a seminorm.
We will now give conditions that ensure that $f$ induces
a morphism
$$f:\Ac(|f(.)|)\to \Ac(|.|)$$
between the multiplicative completions of $|f(.)|:B\to R$
and $|.|:A\to R$.

Remark that the set of multiplicative elements
(see Definition \ref{definition-multiplicative})
for a seminorm is not functorial in ring morphisms meaning that if
$f:B\to A$ is a ring morphism and $|.|:A\to R$ is a seminorm,
then we don't necessarily have
$f(M_{|f(.)|})\subset M_{|.|}$.
This problem does not appear in the case of multiplicative seminorms
but we have to solve it in the non-multiplicative case. This motivates
the following definition.

\begin{definition}
\label{def-functorial-multiplicative}
Let $|.|:A\to R$ be a seminorm on a ring $A$ and $M_{A}\subset A$
be the set of multiplicative elements for $|.|$. We say
that $|.|$ has a functorial multiplicative subset if for all
ring inclusion $f:B\subset A$, we have an inclusion
of $M_{B}\subset M_{A}$ of the corresponding sets of
multiplicative elements for $|.|$.
\end{definition}

\begin{example}
A multiplicative seminorm $|.|:A\to R$ has clearly a functorial multiplicative subset
given by the complement $M_{|.|}=\{a\in A|\,|a|\neq 0\}$ of the corresponding
prime ideal $\pfk_{|.|}=\{a\in A|\,|a|=0\}$.
\end{example}

\begin{remark}
Completions of rings with respect to seminorms
are not functorial in ring homomorphisms.
For example, if $|.|_{2,0}:\Q(T)\to R=\R_{+,trop}\lextimes\R_{+}$
if given by
$$|\sum_{i=i_{0}}^na_{i}T^i|_{2,0}=(e^{-i_{0}},|a_{i_{0}}|_{p})$$
for $a_{i_{0}}\neq 0$, then the natural morphism
$$\Q\to \Q(T)\to R$$
does not induce a topological ring homomorphism
from $\Q_{p}$ to the completion of $\Q(T)$. Indeed,
the inverse image of $B(0,|T|)$ in $\Q$ by the inclusion
$\Q\to \Q(T)$ is $\{0\}$ and it is not open in $\Q$
for the $p$-adic topology. In fact, the completion of $\Q(T)$
for $|.|_{2,0}$ only depends on the $T$-adic valuation
$|P|_{0}=e^{-i_{0}}$ and it gives $\Q((T))$. We thus need a
more general notion of ``completion'' that is functorial
with respect to sequences
$$A\overset{f}{\to} B\overset{|.|}{\to} R$$
with $f:A\to B$ a ring homomorphism and $|.|:B\to R$ a reasonable
seminorm on $B$.
\end{remark}

\begin{remark}
\label{local-numbers}
Let's translate the above remark in terms of numbers.
The notion of local number (or element of a completion) is not functorial
in ring morphisms.
This could be repaired by replacing the usual completion
of $(A,|.|)$ by (say) the subset
$$\A^{1,h}_{(A,|.|)}=\{x\in \A^1_{A}||.(x)|_{|A}\sim |.|\}$$
of the harmonious affine line, which is clearly functorial
by definition. In this sense, an $(A,|.|)$-completed
number would just be a particular seminorm on $A[X]$.
This is not that strange because any number $a$ in $\R$ or $\Q_{p}$
can be seen as such a thing in
$\A^{1,h}_{(\Z,|.|)}$ given by $|.(a)|_{\infty}$ and $|.(a)|_{p}$
respectively.
However, this setting would prevent us from adding numbers
because there is no way to properly add seminorms with values in different
halos. Perhaps one could be inspired by Conway's theory
of surreal numbers \cite{Conway} to deal with this problem.
The functoriality issue of analytic functions is certainly at the heart of our difficulties,
and this is not to be hidden. The idea of this remark will be explored further
in Section \ref{analytic-functions-affine-line}.
\end{remark}

\begin{definition}
\label{functorial-multiplicative-completion}
Let $A$ be a ring and $|.|:A\to R$ be a seminorm on $R$.
We say that $|.|$ has a functorial multiplicative completion if
for all ring homomorphism $f:B\to A$, if we denote
$M_{B}\subset B$ and $M_{A}\subset A$ the corresponding multiplicative subsets,
we have that
\begin{enumerate}
\item $f(M_{B})\subset M_{A}$ (i.e. $|.|$ has a functorial multiplicative subset),
\item $|f(.)|:B\to R$ has multiplicatively tiny balls and $|f(.)|$ extends to
the completion of $B_{M_B}$,
\item for all $|a|>0$ in $|A_{M_{A}}|$, there exists $b$ in $B_{M_{B}}$ such
that $|b|>0$ and $B(0,|b|)\subset f^{-1}(B(0,|a|))$.
\end{enumerate}
If $|.|:A\to R$ is multiplicative and has functorial multiplicative completion,
we will say that it has functorial residue fields.
\end{definition}

\begin{example}
A tempered multiplicative seminorm $|.|:A\to R$ has functorial residue fields
if and only if for all ring homomorphism $f:B\to A$,
the corresponding (injective) morphism
$$f:\Frac(B/\pfk_{B})=K_{B}(|.|)\to K_{A}(|.|)=\Frac(A/\pfk_{A})$$
between the residue fields is continuous, i.e., for all $a\in K_{A}^*$,
there exists $b\in K_{B}^*$ such that $B(0,|b|)\subset f^{-1}(B(0,|a|))$.
\end{example}

\begin{lemma}
A multiplicative seminorm $|.|:A\to \R_{+}$
has functorial residue fields.
\end{lemma}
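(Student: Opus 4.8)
The plan is to verify the three conditions of Definition \ref{functorial-multiplicative-completion} for an arbitrary ring homomorphism $f:B\to A$; equivalently, since $\R_{+}$ has tempered growth (so that $|.|$ is automatically a \emph{tempered} multiplicative seminorm), one can invoke the characterization from the Example immediately preceding the statement and reduce everything to showing that the induced map on residue fields
$$\bar f:\Frac(B/\pfk_{B})\longrightarrow\Frac(A/\pfk_{A})$$
is continuous for the ball topologies.

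First I would dispatch the bookkeeping. The seminorm $|f(.)|:B\to\R_{+}$ is again multiplicative, with support $\pfk_{B}:=\Ker(|f(.)|)=f^{-1}(\pfk_{A})$ where $\pfk_{A}:=\Ker(|.|)$; hence $f(M_{B})\subset M_{A}$, which is the first condition. Because $\R_{+}$ has tempered growth, $|f(.)|$ is a tempered multiplicative seminorm, so the Corollary on multiplicatively tiny balls applies and yields the second condition, and it moreover identifies the separated localization of $B$ at $M_{B}$ with the residue field $K_{B}:=\Frac(B/\pfk_{B})$, carrying the norm $z\mapsto|\bar f(z)|_{K_{A}}$, where $K_{A}:=\Frac(A/\pfk_{A})$. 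In particular $\bar f:K_{B}\to K_{A}$ is an \emph{isometric} field embedding, and since it is an additive homomorphism, proving continuity amounts to proving continuity at $0$, which is exactly the third condition of Definition \ref{functorial-multiplicative-completion}.

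The only real content — and the place where being $\R_{+}$-valued (rather than valued in an arbitrary tempered aura) is genuinely used — is this: given $a\in K_{A}$ with $|a|>0$, I must exhibit $b\in K_{B}$ with $|b|>0$ and $B(0,|b|)\subset\{z\in K_{B}:|z|_{K_{B}}<|a|\}$. Here I would argue by a dichotomy. Either there is $c\in K_{B}$ with $0<|c|_{K_{B}}\le|a|$, in which case $b=c$ works immediately; or no such $c$ exists, meaning every nonzero element of the field $K_{B}$ has norm strictly greater than $|a|>0$. In the second case, applying this to the powers $c^{\pm n}$ of a fixed nonzero $c$ and using multiplicativity together with the elementary fact that a real $t\in(0,1)$ has $t^{n}\to 0$ in $\R_{+}$ (and $t>1$ has $t^{-n}\to 0$), one is forced to conclude $|z|_{K_{B}}=1$ for every nonzero $z\in K_{B}$, i.e.\ the restricted norm is trivial; then $b=1$ works because $B(0,1)=\{z\in K_{B}:|z|<1\}=\{0\}$, which lies in $\{z:|z|<|a|\}$ since $|a|>0$. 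This establishes continuity of $\bar f$, hence all three conditions of Definition \ref{functorial-multiplicative-completion}, so $|.|$ has functorial residue fields. I expect this dichotomy to be the only step requiring any care; everything else is formal once the earlier corollaries on tempered growth and multiplicatively tiny balls are in hand.
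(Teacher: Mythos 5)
Your proposal is correct and follows essentially the same path as the paper's proof: reduce to the map on residue fields, handle the easy case (where a small enough element of $K_B$ already exists) directly, and for the remaining case use the archimedean property of $\R_+$ (that $t^n\to 0$ for $0<t<1$) applied to powers of a fixed element. The paper organizes the casework slightly differently — it first disposes of $|a|\ge 1$, then splits on whether some $0<|b|<1$ exists in $K_B$, rather than your dichotomy on $0<|c|\le|a|$ — but the underlying mechanism and the final fallback to the trivial norm on $K_B$ with $B(0,1)=\{0\}$ are identical.
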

\begin{proof}
We can reduce to the case of a field $A$.
Let $f:B\to A$ be a field morphism. Remark that given $a\in A^*$,
the condition that there exists $b\in B^*$ such that
$B(0,|b|)\subset f^{-1}(B(0,|a|)$ is clearly fulfilled if $|a|\geq 1$.
Indeed, we then have $B(0,|1|)\subset f^{-1}(B(0,|1|)\subset
f^{-1}(B(0,|a|))$. We thus restrict to the case $|a|<1$. 
If there exists $b\in B$ such that $0<|b|<1$,
then there exists $n\in \N$ such that $|b^n|=|b|^n<|a|$
this implies that $B(0,|b^n|)\subset f^{-1}(B(0,|a|))$.
Otherwise, we have $|B|=\{0,1\}$ and $B(0,|1|)=\{0\}\subset f^{-1}(B(0,|a|))$.
\end{proof}

\subsection{The analytic spectrum of a ring}
\begin{definition}
Let $A$ be a ring. The analytic spectrum of $A$
is the subset $\Speh^a(A)$ of $\Speh^{m}(A)$
of the tempered multiplicative spectrum
given by seminorms $|.(x)|$ that have a functorial
multiplicative completion.
\end{definition}

\begin{definition}
\label{analytic-functions}
Let $A$ be a ring and
$$
R\left(\frac{a_{1},\dots,a_{n}}{b}\right)=\{x\in \Speh^a(A)|\,
|a_{i}(x)|<|b(x)|,\,d|b\Rightarrow d\textrm{ is multiplicative for }|.(x)|\}
$$
be an open rational subset of $\Speh^a(A)$.
Let $\tilde{R}\subset \Speh^{pasm}(A)$ be the subset composed
of $R$ and of the set of $\R_{+}$-valued square-multiplicative seminorms
that have functorial multiplicative completion and are $\Z$-multiplicative.
The ring of analytic series
on $\tilde{R}$ is the completion $\Oc(\tilde{R})$ of $A[b^{-1}]$ for the topology induced
by the natural map
$$A[b^{-1}]\to \prod_{x\in \tilde{R}}\Ac(x).$$
Let $U$ be an open subset of $\Speh^a(A)$.
A rule
$$f:U\to \coprod_{x\in \Speh^a(A)}\Ac(x)$$
such that $f(x)\in \Ac(x)$ is called an analytic function
if for all $x_{0}\in \Speh^a(A)$, there exists an open rational domain
$R=R\left(\frac{a_{1},\dots,a_{n}}{b}\right)$ contained in $U$ and containing $x_{0}$ such that $f_{|R}$ is in the image of $\Oc(\tilde{R})$
by the natural map
$$\Oc(\tilde{R})\to \prod_{x\in R}\Ac(x).$$ 
\end{definition}

\begin{remark}
Another approach to the definition of analytic function (that would prevent
us from using the halo of positive real numbers $\R_{+}$ and follows the viewpoint of
Remark \ref{local-numbers}) will be studied in Section \ref{analytic-functions-affine-line}.
\end{remark}

\subsection{The analytic spectrum of $\Z$}
\begin{proposition}
The analytic and harmonious spectrum of $\Z$ are
identified.
Germs of analytic functions on $\Speh^a(\Z)$ can be
described by figure \ref{figure-germes-fonctions-speha-Z}
(see also figure \ref{figure-berko-Z}).
\begin{figure}[h]
\centering
\includegraphics[width=0.4\textwidth]{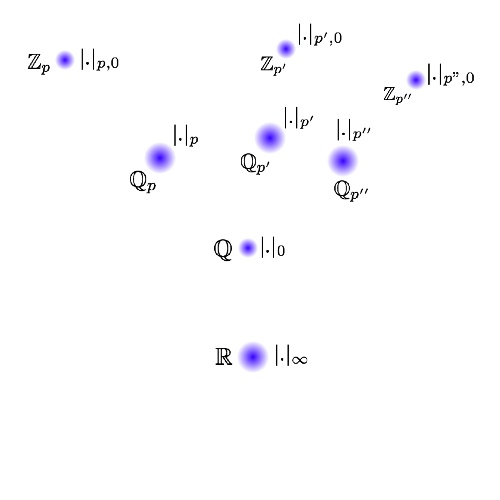}
\caption{Germs of analytic functions on $\Speh^a(\Z)$.}
\label{figure-germes-fonctions-speha-Z}
\end{figure}

\noindent More precisely, depending on the point $x\in \Speh^a(\Z)$,
the corresponding residue field (resp. germs of analytic functions)
are
\begin{itemize}
\item $\F_{p}$ (resp. $\Z_{p}$) if $x=|.|_{p,0}$,
\item $\Q_{p}$ (resp. $\Q_{p}$) if $x=|.|_{p}$,
\item $\R$ (resp. $\R$) if $x=|.|_{\infty}$,
\item $\Q$ (resp. $\Q$) if $x=|.|_{0}$.
\end{itemize}
Moreover, for $m\neq 0$, sections on $\{x|\,0<|m(x)|\}$ are identified
with $\Z[1/m]$ with its trivial topology. In particular, global sections are
identified with $\Z$.

\end{proposition}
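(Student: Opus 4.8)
The plan is to prove the two halves of the statement in turn: first the identification of spectra $\Speh^{a}(\Z)=\Speh^{m}(\Z)$, and then the pointwise computation of residue fields and germs, from which the description of sections over the sets $\{x\,|\,0<|m(x)|\}$ follows at once.

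By Proposition~\ref{harmonious-spectrum-of-Z} the set $\Speh^{m}(\Z)$ consists of the points $|.|_{p}$ and $|.|_{p,0}$ for $p$ prime, the trivial point $|.|_{0}$ attached to the zero ideal, and the archimedean point $|.|_{\infty}$. First I would check that each of these multiplicative seminorms $|.(x)|\colon\Z\to R$ has a functorial multiplicative completion in the sense of Definition~\ref{functorial-multiplicative-completion}. Condition~(1) is automatic, since a multiplicative seminorm has as functorial multiplicative subset the complement of its support. Condition~(2) holds because for any ring homomorphism $f\colon B\to\Z$ the composite $|f(.)|\colon B\to R$ is again a tempered (temperation being a property of $R$) multiplicative seminorm, hence has multiplicatively tiny balls and extends to the completion of $B_{M_{B}}$ by the corollaries of Section~4.1 on tiny balls and multiplicative completions. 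For condition~(3) I would invoke the criterion of the Example following Definition~\ref{functorial-multiplicative-completion}, namely continuity of the induced map $K_{B}\to K_{x}$ of residue fields; the point is the elementary observation that every ring homomorphism $f\colon B\to\Z$ is surjective (its image is a subring of $\Z$ containing $1$, hence equals $\Z$), so that $B/f^{-1}(\pfk_{x})\to\Z/\pfk_{x}$ is an isomorphism, $K_{B}\to K_{x}$ is an isomorphism, and continuity is automatic. This gives $\Speh^{a}(\Z)=\Speh^{m}(\Z)$.

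For the residue fields I would use the identification of the multiplicative completion $\Ac(x)$ with the completion of $\Frac(\Z/\pfk_{x})$ for the induced seminorm. For $x=|.|_{p,0}$ the support is $(p)$, the residue field is the finite field $\F_{p}$, on which the induced multiplicative seminorm is trivial by Lemma~\ref{finite-field-trivial}, so $\Ac(x)=\F_{p}$; for $x=|.|_{p}$ the residue field is $\Q$ with its $p$-adic seminorm, so $\Ac(x)=\Q_{p}$; for $x=|.|_{\infty}$ it is $\Q$ with the archimedean norm, so $\Ac(x)=\R$; for $x=|.|_{0}$ it is $\Q$ with the trivial, hence discrete, seminorm, so $\Ac(x)=\Q$. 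For the germs I would first determine the small neighbourhoods in the topology generated by the sets $U\left(\frac{a}{b}\right)=\{x\,|\,|a(x)|<|b(x)|\}$ (the multiplicativity clause being automatic on $\Speh^{m}$): one computes directly that $U\left(\frac{1}{2}\right)=\{|.|_{\infty}\}$ and $U\left(\frac{p^{2}}{p}\right)=\{|.|_{p}\}$, so these points are isolated; that every rational neighbourhood of $|.|_{p,0}$ contains $|.|_{p}$, so $U\left(\frac{p}{1}\right)=\{|.|_{p},|.|_{p,0}\}$ is the smallest neighbourhood of $|.|_{p,0}$; and that the only neighbourhoods of $|.|_{0}$ are the cofinite sets $U\left(\frac{0}{b}\right)=\{x\,|\,|b(x)|>0\}=\Speh^{a}(\Z)\setminus\{|.|_{q,0}\,|\,q\mid b\}$. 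Since $\Oc(\tilde{R})$ for a rational domain $R=R\left(\frac{a_{1},\dots,a_{n}}{b}\right)$ is the completion of $\Z[b^{-1}]$ for the topology induced by $\Z[b^{-1}]\to\prod_{x\in\tilde{R}}\Ac(x)$, and over $\Z$ the square-multiplicative points added in $\tilde{R}$ lie over the places of $R$ and so induce no finer topology, one obtains: at $\{|.|_{\infty}\}$ the archimedean completion of $\Z[1/2]$, i.e. $\R$; at $\{|.|_{p}\}$ the $p$-adic completion of $\Z[1/p]$, i.e. $\Q_{p}$; at $\{|.|_{p},|.|_{p,0}\}$ the completion of $\Z$ (here $b=1$) for the topology induced by $\Z\to\Q_{p}\times\F_{p}$, in which the discrete factor $\F_{p}$ merely reimposes the neighbourhood $p\Z$ of $0$, so the topology is $p$-adic and the germ is $\Z_{p}$; and on any $U\left(\frac{0}{m}\right)$ the presence of $|.|_{0}$ in $\tilde{R}$, with $\Ac(|.|_{0})=\Q$ discrete, forces the topology on $\Z[1/m]$ to be discrete, whence $\Oc(\tilde{R})=\Z[1/m]$ with its trivial topology. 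Passing to the colimit over $m$ with more and more prime factors gives $\Oc_{|.|_{0}}=\bigcup_{m}\Z[1/m]=\Q$, and $m=1$ (for which $U\left(\frac{0}{1}\right)=\Speh^{a}(\Z)$) gives the global sections $\Z$.

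The step I expect to be the main obstacle is the bookkeeping around $\tilde{R}$: one has to pin down exactly which $\R_{+}$-valued square-multiplicative seminorms are adjoined and to verify that they do not refine the topology used to complete $\Z[b^{-1}]$. Over $\Z$ this should follow because, by Proposition~\ref{harmonious-spectrum-of-Z} and Theorem~\ref{tempered-pre-archimedean}, the relevant seminorms are all multiplicatively equivalent to powers of the four types above, but this has to be made precise. A secondary subtlety is to resist reading off $\Q_{p}$ as the germ at $|.|_{p,0}$: the minimal rational neighbourhood there has denominator $1$, so one completes $\Z$ rather than $\Z[1/p]$, and it is the $\F_{p}$-coordinate alongside the $p$-adic one that pins the germ to $\Z_{p}$.
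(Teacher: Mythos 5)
Your proof is correct and follows essentially the same route as the paper's: reduce to the minimal rational neighbourhoods $\{|.|_\infty\}$, $\{|.|_p\}$, $\{|.|_p,|.|_{p,0}\}$ and the cofinite sets $\{|.|_0\}\cup\cdots$, then complete $\Z[1/b]$ against the relevant product of completions. The one place where you go genuinely beyond what the paper writes is the first half: the paper's proof opens with ``the statement on residue fields follows from the definition'' and never explicitly verifies that every point of $\Speh^m(\Z)$ lies in $\Speh^a(\Z)$, i.e.\ has a functorial multiplicative completion. Your observation that every ring homomorphism $B\to\Z$ is surjective (its image is the prime subring, and $\Z$ has characteristic zero), so that $B_{M_B}\to\Z_{M_A}$ is surjective and the residue-field map $K_B\to K_A$ is an isomorphism, hence trivially continuous, is exactly the right argument and fills a genuine gap in the paper's exposition. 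The remaining delicate point --- that the $\R_+$-valued square-multiplicative seminorms adjoined in $\tilde R$ do not refine the topology used to complete $\Z[1/b]$ --- is treated at the same level of care in your proposal as in the paper: the paper lists them as $|.|_n$ for composite $n$ and $|.|_{0,n}$ and asserts the resulting topology is unchanged, and you assert the same; neither gives a fully worked verification, so you are not behind the paper here. Your warning about not misreading the germ at $|.|_{p,0}$ as $\Q_p$ is a good sanity check and agrees with the paper's treatment (denominator $b=1$, completion of $\Z$ against $\Q_p\times\F_p$, yielding $\Z_p$).
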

\begin{proof}
The statement on residue fields follows from the definition.
If a rational domain $D=\{|n_{1}|<|m|,\dots,|n_{k}|<|m|\}$ contains
the trivial norm, then $n_{i}=0$ for all $i=1,\dots,k$ and $\Oc^h(D)$ is the completion
of $\Z[1/m]$ with respect to the topology induced by the natural map
$$\Z[1/m]\to \Q\times \R\times (\prod_{n}\Q_{n})\times (\prod_{p\nmid m}\F_{p})$$
because the points of $D$ are given by
$D=\{|.|_{0}\}\cup\{|.|_{p},\forall p\}\cup \{|.|_{p,0},\forall p\nmid m\}\cup \{|.|_{\infty}\}$
and the $\R_{+}$-power-multiplicative seminorms are of the form $|.|_{n}$ for
$n\in \Z-\{0,\pm 1\}$ and $|.|_{0,n}$ for $n\nmid m$.
Since $\Q$ above is equipped with the discrete topology (induced by the trivial
valuation $|.|_{0}$), we have $\tilde{\Oc}(D)=\Z[1/m]$ equipped with the discrete topology.
More precisely, germs of analytic functions at $|.|_{0}$ are equal to
$\limind_{m}\Z[1/m]=\Q$.
Now suppose that $D$ does not contain the trivial norm. Then it is
a disjoint union of rational domains of the form
$\{|p|<1\}$ and $\{0<|p|,\,|p|<1\}$, so that we can suppose that
$D$ is of one of these two forms. In the first case, we have $D=\{|.|_{p},|.|_{p,0}\}$,
and $\tilde{\Oc}(D)$ is the completion of $\Z$ with respect
to the natural map
$$\Z\to \Q_{p}\times \F_{p},$$
i.e, $\tilde{\Oc}(D)=\Z_{p}$. More precisely, in this case, $D$
is the smallest rational subset that contains $|.|_{p,0}$ so
that germs of analytic functions around $|.|_{p,0}$ are also
given by $\Z_{p}$.
In the second case, we have $D=\{|.|_{p}\}$ and
$\tilde{\Oc}(D)$ is the completion of $\Z[1/p]$ with respect
to the natural map
$$\Z[1/p]\to \Q_{p}.$$
There is a natural map from the completion $\Z_{p}$ of $\Z$
with respect to $\Z\to \Q_{p}$ to $\tilde{\Oc}(D)$ and $p$ is
invertible in $\tilde{\Oc}(D)$ so that the natural map
$\Z_{p}\to \tilde{\Oc}(D)$ factorizes through $\Q_{p}=\Z_{p}[1/p]$,
so that $\tilde{\Oc}(D)\cong \Q_{p}$ and these are also the germs
of analytic functions at $|.|_{p}$.
If $D=\{1<|2|\}=\{|.|_{\infty}\}$,
$\tilde{\Oc}(D)$ is the completion of $\Z[1/2]$ with respect to
the natural map
$$\Z[1/2]\to \R,$$
so that $\tilde{\Oc}(D)=\R$ and these are also the germs
of analytic functions at $|.|_{\infty}$.
\end{proof}

\subsection{The analytic affine line over $\Z$}
We first describe the set of points of the analytic line over $\Z$.

\begin{proposition}
\label{analytic-affine-line-points}
The points of $\Speh^{m}(\Z[T])$ that are in the analytic affine line
$\Speh^{a}(\Z[T])$ are given by the following list:
\begin{enumerate}
\item The trivial seminorm.
\item $\F_{p}$-points: (non-archimedean) seminorms whose restriction to $\Z$ have
a non-trivial kernel $(p)$. These are described in Lemma \ref{Fp-points}.
\item $\Q_{p}$-points: (non-archimedean) seminorms whose restriction
to $\Z$ give the $p$-adic norm. These are described in Proposition
\ref{Huber-Knebusch} and the only ones that are non-analytic are
those associated in 4 of loc. cit. to a major subset $M\subset |\Q^*|$
with $M=\emptyset$ or $M=|\Q^*|$ (i.e. infinitesimal neighborhoods
of $p$-adic $\bar{\Q}$-points and infinitesimal neighborhood of infinity).
More precisely, they are in bijection with the points of Huber's adic affine line
$\A^{1,ad}_{\Q_{p}}=
\A^1_{\Q_{p}}\underset{\Spec(\Q_{p})}{\times}\Spa(\Q_{p},\Z_{p})$.
\item $\R$-points: (archimedean) seminorms whose restriction to $\Z$
give the usual archimedean norm. These are described in Theorem
\ref{archimedean-harmonious-affine-line} and the only ones that
are non-analytic are those of the form $|.|_{2,a}$ for $a\in \bar{\Q}^*$
or $|.|_{2,\infty}$ in the notations of loc. cit. (i.e. infinitesimal neighborhoods
of archimedean $\bar{\Q}$-points and infinitesimal neighborhood of infinity).
To be precise, archimedean seminorms are $\R_{+}$-valued
seminorms.
\end{enumerate}
\end{proposition}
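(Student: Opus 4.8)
The plan is to classify a point $x\in\Speh^a(\Z[T])$ by its image under the restriction map $\Speh^m(\Z[T])\to\Speh^m(\Z)$. Restriction carries a multiplicative tempered place to a multiplicative tempered place (multiplicativity is immediate, and tempered growth is inherited by the sub-halo generated by the image), so this map is well defined; by Proposition \ref{harmonious-spectrum-of-Z} the image of $x$ is one of $|.|_0$, $|.|_{p,0}$, $|.|_p$ ($p$ prime), or $|.|_\infty$, and these four possibilities will yield items 1, 2, 3 and 4 respectively. In each case I would first invoke the classification, already obtained earlier in the paper, of all seminorms on $\Z[T]$ lying over the prescribed restriction, and then cut down to the analytic ones using Definition \ref{functorial-multiplicative-completion}. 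The case $x|_\Z=|.|_0$ is elementary: $x$ is then trivial on $\Q$, its residue field is a function field or number field with the trivial (discrete) seminorm, and a direct inspection shows that the resulting analytic point is the trivial seminorm on $\Z[T]$, i.e.\ item 1.

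When $x|_\Z=|.|_{p,0}$, Lemma \ref{Fp-points} applies verbatim: $x$ is non-archimedean, factors through $\F_p[T]$, and is equivalent either to the trivial seminorm on $\F_p[T]/(P)$ or to the $P$-adic seminorm $|.|_P$ on $\F_p[T]$ for an irreducible $P\in\F_p[X]$. Both are analytic: the residue field is the finite field $\F_{p^{\deg P}}$ with the trivial seminorm, or $\F_p(T)$ with a discretely valued ($R_{P^\Z}$-valued) seminorm, so every ring map into $\Z[T]$ induces on residue fields a map with discrete or discretely valued ball topology, and conditions (1)--(3) of Definition \ref{functorial-multiplicative-completion} follow by a direct computation (the witnessing elements $b$ being powers of $P$ or non-zero scalars). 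This is item 2.

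The core case is $x|_\Z=|.|_p$, so that $|2(x)|\le 1$ and, by Lemma \ref{valuative-harmonious}, $x\in\Spev(\Z[T])$ is a Krull valuation extending the $p$-adic valuation of $\Q$. Base-changing to a fixed algebraic closure $\overline{\Q_p}$ of $\Q_p$ carrying the $p$-adic valuation and descending along $\Gal(\overline{\Q_p}/\Q_p)$, I would apply the Huber--Knebusch classification (Proposition \ref{Huber-Knebusch}) to present $x$ through a filter of discs and place it in one of its types, then compute the multiplicative completion type by type. Types 1, 2 and 3 give, respectively, a finite extension of $\Q_p$ (the case of non-zero kernel, where $x$ evaluates at a point of $\overline{\Q_p}$), the completed generalized Gauss field, and an immediate rank-one extension; all three are analytic, as is type 4(c), the shifted generalized Gauss valuation with value group $|\Q_p^*|\times q^\Z$ ($q$ lying strictly between two consecutive disc radii). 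Types 4(a) (where $q=|X-a|$ exceeds every element of $|\Q_p^*|$) and 4(b) ($q$ below every element) are \emph{not} analytic: the value halo then carries an infinitesimal factor $q^\Z$, so the ball $B(0,q)$ (resp.\ $B(0,q^{-1})$) around $0$ has preimage $\{0\}$ under $\Q_p\hookrightarrow$ residue field, which is not open, so condition (3) of Definition \ref{functorial-multiplicative-completion} fails --- this is precisely the mechanism of Remark \ref{local-numbers}, where $\Q(T)$ completes to $\Q((T))$ instead of detecting $\Q_p$. Matching the surviving types 1, 2, 3, 4(c) with Huber's description of the points of the adic affine line (classical points, Gauss points of discs, immediate extensions, and rank-two points interpolating between two disc radii) produces the bijection with $\A^1_{\Q_p}\underset{\Spec(\Q_p)}{\times}\Spa(\Q_p,\Z_p)$ asserted in item 3.

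The archimedean case $x|_\Z=|.|_\infty$, where $|2(x)|>1$, runs in complete parallel, using Theorem \ref{archimedean-harmonious-affine-line} in place of Huber--Knebusch: types (a) and (b) have residue field a number field, respectively $\C$, with an archimedean absolute value and are analytic --- here one invokes the Lemma that a multiplicative $\R_+$-valued seminorm has functorial residue fields, together with the continuity of finite field extensions and of embeddings of a number field into $\C$ --- while types (c) $=|.|_{2,a}$ and (d) $=|.|_{2,\infty}$ have value halo $\R_+\lextimes R_{q^\Z}$ with $q$ infinitesimal and fail condition (3) of Definition \ref{functorial-multiplicative-completion} for exactly the reason 4(a) and 4(b) do. Conversely, each point appearing in the list has been shown analytic by these case-by-case checks, so the list is exhaustive. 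I expect the main obstacle to be precisely this functoriality bookkeeping: writing out the ball neighborhood topology on the residue field in each Huber--Knebusch / Theorem \ref{archimedean-harmonious-affine-line} type and deciding when the inclusion of $\Q_p$ (resp.\ of a number field, resp.\ of $\Q$) into it is continuous. The one genuinely new ingredient, beyond re-quoting the classifications of Sections 2--4, is the observation that an infinitesimal factor $q^\Z$ in the value group forces a non-open preimage and hence non-analyticity --- this is what removes the ``infinitesimal neighborhood of a $\bar{\Q}$-point'' and ``infinitesimal neighborhood of infinity'' seminorms from the spectrum.
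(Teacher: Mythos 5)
The paper states this proposition with no accompanying proof, so there is no internal argument to compare against; your reconstruction follows the route the statement itself suggests. You classify a place $x$ by its image under $\Speh^m(\Z[T])\to\Speh^m(\Z)$ via Proposition \ref{harmonious-spectrum-of-Z}, invoke the fibrewise classifications of Lemma \ref{Fp-points}, Proposition \ref{Huber-Knebusch} and Theorem \ref{archimedean-harmonious-affine-line}, and then test each type against condition (3) of Definition \ref{functorial-multiplicative-completion}. The mechanism you isolate --- an infinitesimal factor $q^\Z$ in the value halo (below, or above, every element of $|K^*|$) collapses the preimage of a small ball around $0$ in the residue field to $\{0\}$, which is not a ball for the $p$-adic or archimedean seminorm on the smaller field --- is correct, matches the ``infinitesimal neighborhood'' language of the statement, and does dispose of types $4(a)$, $4(b)$ in the $\Q_p$ fibre and of $|.|_{2,a}$, $|.|_{2,\infty}$ in the $\R$ fibre, so that part is sound (up to the bookkeeping you yourself flag as incomplete).

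Your treatment of the fibre over $|.|_0$ is, however, not correct. You assert that the residue field then carries the trivial seminorm and conclude by inspection that the only analytic point there is the trivial seminorm on $\Z[T]$. Consider the $T$-adic valuation $|.|:\Z[T]\to R_{q^\Z}$ with $|T|=q<1$ and $|.|$ trivial on $\Z$. Its residue field is $\Q(T)$ equipped with a non-trivial tropical seminorm (so your premise already fails); it is tempered because tropical halos have tempered growth, and it is multiplicative. Moreover it satisfies condition (3) of Definition \ref{functorial-multiplicative-completion} for every $f:B\to\Z[T]$, by exactly the reasoning you exploit in the opposite direction elsewhere: since $|.|_0$ is trivial on $\Q$, the ball $B(0,1)=\{0\}$ is already open in $\Q$, so $B(0,|b|)\subset\{0\}\subset f^{-1}(B(0,|a|))$ holds for any $b\in\Q^*$. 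Thus this is a non-trivial analytic point of $\Speh^a(\Z[T])$ restricting trivially to $\Z$, and the degree valuation at infinity and their rank-two refinements behave the same way. This over-strong reading of item 1 is arguably already present in the proposition as written, but ``a direct inspection shows'' is not a proof, and the step you sketch for case 1 would, if carried through, establish something false; a correct treatment of that fibre must either enlarge item 1 to include these points or explain why they are excluded.
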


We now describe the relation of our structural sheaf with
the usual sheaf of holomorphic functions on the complex plane.
\begin{proposition}
Let $\Speh^{arch}(\Z[T])$ be the archimedean open subset of $\Speh^a(\Z[T])$,
defined by the condition $|2(x)|>|1|$. If $R\subset \Speh^{arch}(\Z[T])$
is a rational domain and $R'\subset \C$ is the corresponding rational
domain in the complex plane, the natural inclusion $R'\to R$
induces an isomorphism
$$\Oc(R)\overset{\sim}{\to}\Hc ol(R')$$
where $\Hol(R')$ denotes usual holomorphic functions on $R'$
and $\Oc(R)$ denotes the sections of the structural analytic
sheaf of $\Speh^a(\Z[T])$ on the open subset $R$.
\end{proposition}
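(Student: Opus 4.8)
The plan is to make the archimedean part of $\Speh^a(\Z[T])$ completely explicit as (a conjugation quotient of) the complex plane, to identify the local value rings $\Ac(x)$ with subfields of $\C$, and then to recognise a section of $\Oc$ over $R$ as a function on $R'$ which is locally a uniform limit on compacta of rational functions regular on $R'$ --- that is, a holomorphic function. First I would record, using Theorem~\ref{archimedean-harmonious-affine-line} and Proposition~\ref{analytic-affine-line-points}, that every point of $\Speh^{arch}(\Z[T])$ is a seminorm $x_a\colon P\mapsto |P(a)|_\C$ for some $a\in\C$, and conversely every $a\in\C$ gives such a point (the infinitesimal archimedean points $|.|_{2,a}$ and $|.|_{2,\infty}$ of loc.\ cit.\ being exactly those discarded in passing to $\Speh^a$, since they carry no functorial multiplicative completion). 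As polynomials over $\Z$ have real coefficients, $x_a=x_{\bar a}$, so the assignment $a\mapsto x_a$ is a continuous surjection $\iota\colon R'\to R$ inducing a homeomorphism $R'/(z\sim\bar z)\cong R$; moreover the topology generated by the sets $\{x:|a(x)|<|b(x)|\}$ pulls back along $\iota$ to the Euclidean topology on $R'$, since each such set is Euclidean-open and finite intersections of such sets isolate arbitrarily small neighbourhoods of any pair $\{z_0,\bar z_0\}$. Accordingly, $\Hol(R')$ is to be read throughout as the holomorphic functions on the conjugation-symmetric domain $R'$ carrying their natural real structure (equivalently, the conjugation-equivariant ones), this being how we understand ``holomorphic functions on the corresponding rational domain''.

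Next I would compute the stalks. The support of $x_a$ is the prime $\pfk=\{P\in\Z[T]:P(a)=0\}$, so the residue field $\Frac(\Z[T]/\pfk)$ is $\Q(a)\subseteq\C$ --- the rational function field with the valuation $P/Q\mapsto |P(a)|_\C/|Q(a)|_\C$ if $a$ is transcendental, the number field $\Q[T]/(\text{min.\ pol.\ of }a)$ if $a$ is algebraic --- and its multiplicative completion $\Ac(x_a)$ is the completion of this valued field, i.e.\ the topological closure of $\Q(a)$ in $\C$: this is $\R$ when $a\in\R$ and $\C$ otherwise, and in every case $\Ac(x_a)$ embeds into $\C$ via $T\mapsto a$. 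A section $f\in\Oc(R)$ being by definition a rule $x\mapsto f(x)\in\Ac(x)$, composing with these embeddings yields a $\C$-valued function $\tilde f$ on $R'$, and this is the candidate map $\Oc(R)\to\Hol(R')$. It is injective, since a field homomorphism is injective and every point of $R$ is some $x_a$, so $\tilde f\equiv 0$ forces $f\equiv 0$; and $\tilde f$ is automatically conjugation-equivariant because $x_a=x_{\bar a}$ and the coefficients are real.

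To see that $\tilde f$ is holomorphic, fix $a_0\in R'$. By the definition of an analytic function there is a rational domain $D=R\!\left(\tfrac{a_1,\dots,a_m}{b}\right)\subseteq R$ with $x_{a_0}\in D$ such that $f|_D$ lies in the image of $\Oc(\tilde D)\to\prod_{x\in D}\Ac(x)$, where $\Oc(\tilde D)$ is the completion of $\Z[T][b^{-1}]$ for the topology induced by $\Z[T][b^{-1}]\to\prod_{y\in\tilde D}\Ac(y)$. On the complex trace $D'=\iota^{-1}(D)$ one has $b\neq 0$ (as $|a_i(a)|<|b(a)|$ rules out $b(a)=0$), so every element of $\Z[T][b^{-1}]$ restricts to a rational function holomorphic on $D'$. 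The key point is that $\tilde D$ contains the sup-seminorms $\|\cdot\|_{\infty,K}$ attached (via Proposition~\ref{sm-real-valued-spectrum}) to compact spectrally convex $K\subseteq D'$: these are $\R_+$-valued, square-multiplicative, $\Z$-multiplicative, and have functorial multiplicative completions. Hence convergence in $\Oc(\tilde D)$ entails uniform convergence on every such $K$, so near $a_0$ the function $\tilde f$ is a uniform-on-compacta limit of holomorphic functions and is therefore holomorphic; as $a_0$ was arbitrary, $\tilde f\in\Hol(R')$.

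For surjectivity, given $g\in\Hol(R')$ one covers $R'$ by rational domains $D'_\alpha$, complex traces of rational domains $D_\alpha\subseteq R$, and on each symmetric $D'_\alpha$ one approximates $g$, uniformly on compacta, by rational functions that are regular on $D'_\alpha$ and arise (after absorbing the finitely many extra poles into the denominator and rounding coefficients) from $\Z[T][b_\alpha^{-1}]$, using the conjugation-equivariant (``symmetric'') form of the Runge approximation theorem; these form a Cauchy net in $\Oc(\tilde D_\alpha)$ whose limit maps to $g(a)$ at each $x_a$, so the rule $x_a\mapsto g(a)$ is an analytic section over $R$ mapping to $g$. The main obstacle is precisely the interface, used in the last two paragraphs, between the abstractly defined topology on $\Oc(\tilde D)$ --- a subspace topology inside the vast product over $\Speh^{pasm}(\Z[T])$ --- and the classical topology of uniform convergence on compact subsets of $D'$: one must verify that a cofinal family of sup-seminorms $\|\cdot\|_{\infty,K}$ genuinely lies in $\tilde D$ (the functorial-multiplicative-completion condition for them being the technical heart), and combine this with a Runge-type density statement matching the coefficient ring $\Z[T][b^{-1}]$ with (conjugation-equivariant) holomorphy; the complex-conjugation bookkeeping of the first paragraph is the only remaining point demanding care.
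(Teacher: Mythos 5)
Your proof follows essentially the same route as the paper's: identify the archimedean analytic points with $\C/c$, interpret $\tilde R$ in terms of compact (spectrally convex) subsets of $R'$ so that the completion topology on $\Oc(\tilde R)$ becomes uniform convergence on compacta, observe that such limits of rational functions are holomorphic, and invoke Runge for surjectivity. You are, however, noticeably more careful than the published proof about three points it elides: the $2$-to-$1$ nature of $R'\to R$ and the resulting need to read $\Hol(R')$ as conjugation-equivariant functions, the verification that the sup-seminorms $\|\cdot\|_{\infty,K}$ really satisfy the membership conditions for $\tilde D$ (including functorial multiplicative completion), and the arithmetic wrinkle that Runge approximants must be coerced into $\Z[T][b^{-1}]$ rather than $\C(T)$.
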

\begin{proof}
Remark first that $\Speh^{arch}(\Z[T])$ is identified with $\C/c$ where
$c$ denotes complex conjugation.
Let $\tilde{R}$ be the subset of $\Speh^{pasm}(\Z[T])$ associated
to $R$ as in Definition \ref{analytic-functions}. This set is identified
with the set of spectrally convex compact subsets of $\C/c$.
The map $R'\subset \tilde{R}$
is the inclusion of points in the set of compact subsets.
A function in $\Oc(\tilde{R})$ is locally a limit of a Cauchy sequence
of rational functions with no poles for the topology of convergence
on all compacts. Such a limit is holomorphic and is uniquely
determined by its values on $R'\subset R$. Moreover, every
holomorphic function on $R'$ is a local uniform limit on
all compacts of polynomials. This shows that
$\Oc(R)\overset{\sim}{\to}\Hc ol(R')$ is an isomorphism.
\end{proof}

There is also a nice relation between our structural sheaf and
the sheaf of analytic functions on Berkovich's analytic space.
\begin{proposition}
Let $\Speh^{p}(\Z[T])$ be the $p$-adic open subset of $\Speh^a(\Z[T])$,
defined by the conditions $0<|p(x)|<|1(x)|$ and $\A^{1,ber}_{\Q_{p}}$
be Berkovich's affine line over $\Q_{p}$. The natural map
$f:\A^{1,ber}_{\Q_{p}}\to \Speh^p(\Z[T])$
induces an isomorphism of sheaves
$f^{*}\Oc\to \Oc^{ber}$ between the inverse image of
the sheaf of analytic functions and Berkovich's sheaf of
analytic functions.
\end{proposition}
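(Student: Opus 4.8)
The plan is to unwind Definition \ref{analytic-functions} over the $p$-adic part of $\Speh^{a}(\Z[T])$ and to compare the resulting rings of sections, rational domain by rational domain, with Berkovich's description of $\Oc^{ber}$ via affinoid algebras (\cite{Berkovich1}, 1.5). First I would describe $f$ on points. A point of $\A^{1,ber}_{\Q_{p}}$ is a bounded multiplicative seminorm $|\cdot|\colon\Qp[T]\to\R_{+}$ restricting to $|\cdot|_{p}$ on $\Qp$; composing with $\Z[T]\hookrightarrow\Qp[T]$ yields a multiplicative seminorm on $\Z[T]$ which is non-archimedean since $0<|p|<1$ (Lemma \ref{pre-non-archimedean}), is tempered since it is $\R_{+}$-valued and $\R_{+}$ has tempered growth, and has functorial multiplicative completion because $\R_{+}$-valued multiplicative seminorms have functorial residue fields (Section \ref{functoriality-issue}); hence it lies in $\Speh^{p}(\Z[T])$. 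Thus $f$ is well defined, and it is continuous because the preimage of a rational domain $R\left(\frac{a_{1},\dots,a_{n}}{b}\right)$ is cut out by the same inequalities $|a_{i}(y)|<|b(y)|$, hence is a rational domain of $\A^{1,ber}_{\Q_{p}}$. Moreover $f$ is injective and, by Proposition \ref{analytic-affine-line-points}, identifies $\A^{1,ber}_{\Q_{p}}$ with the rank-$\le 1$ (equivalently, $\R_{+}$-valued) locus of $\Speh^{p}(\Z[T])$: a rank-$\le 1$ analytic $\Qp$-point of $\Z[T]$ extends uniquely to an $\R_{+}$-valued multiplicative seminorm on $\Qp[T]$, because $\Frac(\Z[T]/\pfk)\hookrightarrow\Frac(\Qp[T]/\pfk')$ has dense image for the valuation topology ($\Z$ being dense in $\Qp=\widehat{\Frac(\Z)}$ and $\Z[T]$ in $\Qp[T]$). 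In particular, for $y\in\A^{1,ber}_{\Q_{p}}$ with $x=f(y)$ one obtains a canonical isometric identification of Berkovich's completed residue field $\Hc(y)$ with $\Ac(x)$, using that for a tempered multiplicative seminorm $\Ac(x)$ is the completion of the residue field $\Frac(\Z[T]/\pfk_{x})$.

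Next I would construct the comparison map $f^{*}\Oc\to\Oc^{ber}$. By adjunction it is the same datum as a map $\Oc\to f_{*}\Oc^{ber}$, i.e.\ for each open $V\subseteq\Speh^{p}(\Z[T])$ a map $\Oc(V)\to\Oc^{ber}(f^{-1}(V))$. Given $g\in\Oc(V)$ — a compatible family $g(x)\in\Ac(x)$ which is locally a limit of elements of $\Z[T][b^{-1}]$ — send it to $y\mapsto g(f(y))\in\Hc(y)$ via the identification $\Ac(f(y))\cong\Hc(y)$ above. To see this is a section of $\Oc^{ber}$ over $f^{-1}(V)$, restrict to a rational domain $R$ on which $g$ is the limit of a Cauchy sequence in $\Z[T][b^{-1}]$ for the topology induced by $\prod_{x\in\tilde R}\Ac(x)$; under $\Z[T][b^{-1}]\hookrightarrow\Qp[T][b^{-1}]$ this sequence is Cauchy for uniform convergence on the Berkovich rational domain $f^{-1}(R)$, hence converges in the corresponding affinoid algebra $\Oc^{ber}(f^{-1}(R))$, and its limit is the restriction of $y\mapsto g(f(y))$. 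This defines a morphism of sheaves, and it is injective on sections: $\Oc(\tilde R)$ embeds into $\prod_{x\in\tilde R}\Ac(x)$, and $\tilde R$ contains all the $\R_{+}$-valued square-multiplicative seminorms attached to $R$ (Definition \ref{analytic-functions}), whose suprema compute the uniform topology on $\Z[T][b^{-1}]$; so a section killed at every Berkovich point already vanishes in $\Oc(\tilde R)$.

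Finally, surjectivity, which it suffices to prove for $V=R$ a rational domain. Here $\Oc(\tilde R)$ is, by Definition \ref{analytic-functions}, the completion of $\Z[T][b^{-1}]$ for the topology induced by $\prod_{x\in\tilde R}\Ac(x)$. Using the identification of the $\R_{+}$-valued square-multiplicative seminorms attached to $R$ with the spectrally convex compacts of $W:=f^{-1}(R)$ (via $\Ac(x)\cong\Hc(y)$ and Proposition \ref{sm-real-valued-spectrum}), that topology is the restriction, along $\Z[T][b^{-1}]\subset\Qp[T][b^{-1}]$, of the uniform norm on $W$; and $\Z[T][b^{-1}]$ is dense in $\Qp[T][b^{-1}]$ for this norm since $\Z$ is dense in $\Qp$ and the uniform norm on $W$ restricts to $|\cdot|_{p}$ on $\Qp$. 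Hence $\Oc(\tilde R)$ is the uniform completion of $\Qp[T][b^{-1}]$, which by Berkovich's theory (\cite{Berkovich1}, 1.5, together with the analogue of Tate's acyclicity) is $\Oc^{ber}(W)$. Since domains of the form $W=f^{-1}(R)$ are cofinal among the affinoid domains of $\A^{1,ber}_{\Q_{p}}$ (clear denominators to obtain $\Z[T]$-coefficients, using density of $\Z$ in $\Qp$), gluing via the sheaf axiom on both sides gives the desired isomorphism $f^{*}\Oc\xrightarrow{\ \sim\ }\Oc^{ber}$.

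The step I expect to be the main obstacle is the topological comparison in the last two paragraphs: identifying the ad hoc completion $\Oc(\tilde R)$, built inside $\prod_{x\in\tilde R}\Ac(x)$, with Berkovich's affinoid algebra on $W=f^{-1}(R)$. This hinges on (i) matching the $\R_{+}$-valued (square-)multiplicative points/compacts of the $\Z[T]$-side rational domain with the Berkovich points/compacts of the $\Qp[T]$-side domain, which rests on the functorial-residue-field statement of Section \ref{functoriality-issue} and the isometry $\Ac(x)\cong\Hc(y)$; and (ii) controlling the restriction of the uniform norm to $\Qp$ and the density of $\Z[T][b^{-1}]$ in $\Qp[T][b^{-1}]$. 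Only once these are in place does Berkovich's acyclicity machinery apply verbatim to produce the affinoid algebras and to make the gluing go through.
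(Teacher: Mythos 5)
Your proposal follows the same broad skeleton as the paper (identify the points, then compare the completion defining $\Oc(\tilde R)$ with the Berkovich algebra), and you fill in sheaf-theoretic details (the adjunction $\Oc\to f_*\Oc^{ber}$, injectivity, surjectivity) that the paper leaves implicit. But you omit exactly the step the paper's proof hinges on, and as a result there is a genuine gap in the surjectivity argument.

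The set $\tilde R$ from Definition~\ref{analytic-functions} is the union of $R$ itself (a set of tempered \emph{multiplicative} seminorms, many of which take values in higher-rank auras such as $|\Q^*|\times q^{\Z}$) together with the $\R_{+}$-valued square-multiplicative seminorms attached to $R$. The topology you must identify is the one induced by the map into the full product $\prod_{x\in\tilde R}\Ac(x)$. You only argue that the contribution of the $\R_{+}$-valued part matches uniform convergence on the Berkovich side, via Proposition~\ref{sm-real-valued-spectrum} and the identification $\Ac(x)\cong\Hc(y)$; you then conclude, without justification, that ``that topology'' is the uniform one. What you never address is why the \emph{higher-rank} points of $R$ do not make the product topology strictly finer. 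This is precisely what the paper's proof establishes: via Proposition~\ref{Huber-Knebusch} it identifies the non-Berkovich points of $\Speh^{p}(\Z[T])$ as those attached to a major subset $M$ (of type $\max(|a_i|q^i)$ with $q$ infinitesimally adjacent to some $\gamma_M$), observes that each such point is a generization of the corresponding Gauss point (already an $\R_{+}$-valued point of $\tilde R$), and concludes that the ball topology it induces on $\Q_p(T)$ is \emph{coarser} than the Gauss norm topology. Only then does ``the topology on rational functions \dots only depends on the compact subsets of the corresponding rational domain $R'$'' follow, and with it the identification of $\Oc(\tilde R)$ with uniform-convergence data. Without this comparison of the higher-rank topologies against the Gauss topology, your surjectivity step (and, more subtly, your injectivity step, which assumes $\Oc(\tilde R)$ is detected by its values at Berkovich points) is not established.

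A secondary imprecision: you speak of ``the uniform norm on $W=f^{-1}(R)$,'' but the domains $R$ here are cut out by \emph{strict} inequalities, so $W$ is open and the sup over $W$ need not be finite; the relevant structure is uniform convergence on compact (spectrally convex) subsets of $W$, not a single supnorm, and the passage to an affinoid algebra requires shrinking to closed subdomains. The paper acknowledges this with ``Making $R'$ smaller, one can identify this topology with the topology of uniform convergence on $R'$.'' Your final invocation of Berkovich's affinoid algebras and Tate acyclicity therefore needs this intermediate reduction, which you do not perform.
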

\begin{proof}
By definition of an analytic point, every point of $\Speh^p(\Z[T])$ is in $\Speh^v(\Z[T])$
and extends to a point in the valuation spectrum
$\Speh^v(\Q_{p}[T])$ of $\Q_{p}[T]$. We thus have a natural
map
$$\Speh^p(\Z[T])\to \Speh^v(\Q_{p}[T])$$
and the natural map
$\A^{1,ber}_{\Q_{p}}\to \Speh^v(\Q_{p}[T])$
factors through its image. The points in $\Speh^p(\Z[T])$
that are not in the image of this map are associated to a major
subset $M\subset |\Q^*|$ as in Proposition \ref{Huber-Knebusch}
and are of the form
$$|\sum_{i}a_{i}(X-a)^i|(x)=\max(|a_{i}|q^i)$$
with $q<|\gamma|$ for all $|\gamma|\in M$ or
$|\gamma|<q$ for all $|\gamma|\in |\Q^*|-M$,
and are generization of the corresponding Gauss point
$$|\sum_{i}a_{i}(X-a)^i|(x)=\max(|a_{i}||\gamma_{M}|^i)$$
for a given $\gamma_{M}\in\Q^*$ associated to $M$. This means
that the corresponding topology on $\Q_{p}(T)$ is coarser
than the Gauss norm topology. The topology on rational functions
on a given open rational domain $R$ of $\Spev^p(\Z[T])$ thus only
depends on the compact subsets of the corresponding rational domain $R'$ of
$\A^{1,ber}_{\Q_{p}}$. Making $R'$ smaller, one can identify this topology
with the topology of uniform convergence on $R'$.
\end{proof}

\section{Another approach to analytic functions}
\label{analytic-functions-affine-line}
The functoriality issue for residue fields alluded to in Section
\ref{functoriality-issue} is quite problematic. This is mainly due to the
fact that the notion of completion we use is not really
adapted to higher rank valuations. Our desire to use non-$\R_{+}$-valued
valuations in analytic geometry is mainly due to the fact that
these cannot be studied by model theoretic (i.e. algebraic, in some sense)
methods, because the archimedean property of $\R_{+}$ is not a first order
logic property. The other reason is the natural appearance of higher rank
valuations in the study of the $G$-topology on non-archimedean analytic spaces.
 We will now look for a different kind of completion, that is easily
seen to be functorial, but have other drawbacks.

\subsection{Definition of functorial generalized completions}
Let $(A,|.|)$ be a seminormed ring.
If $(A,|.|)=(\Z,|.|_{\infty})$ we would like our definition
of completion to give back $\R$. We can start by saying
that the completion is simply the Berkovich affine line
$\A^{1,ber}_{(A,|.|)}$, i.e., the set of multiplicative seminorms
$|.|:A[X]\to \R_{+}$ whose restriction to $A$ is the given
seminorm. In the above case, we get $\C/c$ where
$c$ denotes complex conjugation.

\begin{definition}
The pseudo-archimedean square multiplicative affine line on a seminormed
ring $(A,|.|)$ is the space
$$\A^{1,pasm}(A,|.|):=\Speh^{pasm}_{|.|}(A[X])$$
of pre-archimedean square-multiplicative seminorms
$|.|'$ on $A[X]$ such that $|.|'_{|A}$ is multiplicatively equivalent to $|.|$.
\end{definition}

From now on, we allow ourselves to use various topologies (typically given by
rational subsets with closed or open inequalities) on the spectra $\Speh^{pasm}$.
It seems that the closed inequalities topology is better adapted to non-archimedean
spaces.

Recall that there is a natural map
$$A\to \A^{1,pasm}(A,|.|)$$
given by $a\mapsto [P\mapsto |P(a)|]$.
This induces a topology on $A$, and we can think
of the topological space $\A^{1,pasm}(A,|.|)$ as a kind
of generalized completion of $A$ for this topology.
Moreover, if $f:(A,|.|_{A})\to (B,|.|_{B})$ is a ring morphism
such that $|.|_{B}\circ f:A\to R$ is (multiplicatively) equivalent to $|.|_{A}$, then
$f$ extends to a continuous map
$$\hat{f}:\A^{1,pasm}(A,|.|_{A})\to \A^{1,pasm}(B,|.|_{B}).$$
Let $\SNRings$ denote the category of seminormed rings with morphisms
respecting the multiplicative equivalence class of seminorms.
The functor
$$\A^{1,pasm}:\SNRings\to \Top$$
can be seen as a functorial generalized completion
functor. For more flexibility, we will extend this a bit.

\begin{definition}
Let $\Speh^\bullet$ be a sub-quotient functor of $\Speh^{pasm}:\Rings\to \Top$.
Let $\A^{1,\bullet}:\SNRings\to \Top$ be the corresponding version of the affine line
given by
$$\A^{1,\bullet}(A,|.|):=\{x\in \Speh^\bullet(A[X])|\,|.(x)|_{|A}\sim |.|\}.$$
Suppose further that the image of $A$ in $\A^{1,pasm}(A,|.|)$ goes functorially in
$\A^{1,\bullet}(A,|.|)$.
The $\bullet$-completion of $(A,|.|)$ is the space
$\A^{1,\bullet}(A,|.|)$.
\end{definition}

Let us consider the functor $\Speh^\bullet=\Speh^m\subset \Speh^{pasm}$.
The tempered multiplicative affine line $\A^{1,m}(\Z,|.|_{\infty})$
contains all seminorms of the form $P\mapsto |P(x)|_{\infty}$ for $x\in \R$.
It also contains finer infinitesimal points. Moreover, the tempered multiplicative
affine line $\A^{1,m}(\Z,|.|_{p})$ is equal to the valuative affine line
$\A^{1,v}(\Q,|.|_{p})$ whose separated quotient is the Berkovich
affine line $\Mc(\Q[T],|.|_{p})$ of multiplicative seminorms $|.|:\Q[T]\to \R_{+}$
such that $|.|_{|\Q}\sim|.|_{p}$. It contains $\Q_{p}$ as a subset but
is much bigger.

\begin{remark}
The main advantage of such a completion procedure is that it is functorial.
Its main drawback is that it does not give rings but just topological spaces
of local functions.
\end{remark}

\subsection{Definition of foanalytic functions}
Let $\Speh^\bullet$ be a sub-quotient functor of $\Speh^{pasm}$
as in last Section.
Let $A$ be a ring and
$$
U=R\left(\frac{a_{1},\dots,a_{n}}{b}\right)=\{x\in \Speh^\bullet(A)|\,
|a_{i}(x)|\leq |b(x)|\neq 0,\,d|b\Rightarrow d\textrm{ is multiplicative for }|.(x)|\}
$$
be a (closed-inequalities) rational domain in $\Speh^\bullet(A)$.
Let $\A^{1,\bullet}_{U}\subset \Speh^{\bullet}(A[X])$ be the set
of seminorms $|.|:A[X]\to R$ such that $|.|_{|A}$ is multiplicatively
equivalent to an $x\in U$.

\begin{definition}
The space of foanalytic \footnote{Foanalytic is a shortcut for functorial analytic.} functions on $U\subset \Speh^\bullet(A)$
is the space $\Bc^{\bullet}(U)$ of continuous sections of
the natural projection
$$\pi:\A^{1,\bullet}_{U}\to U.$$ 
\end{definition}

The evaluation morphisms for polynomials at elements $a\in A$ induces
a natural map
$$\mathrm{ev}:A\to \Bc^\bullet(U)$$
which factors in
$$A[1/b]\to \Bc^\bullet(U)$$
because $b$ is multiplicative for every seminorm $x$ in $U$.
This shows that the Zariski pre-sheaf $\Oc_{alg}$ on rational
domains naturally maps to the pre-sheaf $\Bc^\bullet$.

The possibility of varying the sub-quotient functor $\Speh^\bullet$,
and eventually taking subfunctors of $\Bc^\bullet$,
makes our theory quite flexible, but the comparison with
usual global analytic spaces does not seem to be an easy task.
We have however included these constructions in our work
because they are the only way we found of having a functorial
version of the completion procedure that seems necessary to define
a flexible notion of analytic function.

\section*{Acknowledgments}
The author thanks P. Almeida, V. Berkovich, M. Berm\'udez, A. Ducros,
I. Fesenko, R. Huber, J. Poineau, A. Thuillier, B. Toen, G. Skandalis and M. Vaqui\'e for useful
discussions around the subject of global analytic geometry. He is particularly grateful
to R. Huber who authorized him to reproduce his own ideas in subsection \ref{huber}.
He also thanks University of Paris 6 and Jussieu's Mathematical Institute for financial
support and excellent working conditions.

\bibliographystyle{alpha}
\bibliography{/Users/fpaugam/Documents/travail/fred}

\end{document}